\def\Z{\mathbb{Z}}
\def\R{\mathbb{R}}
\def\N{\mathbb{N}}
\def\epsilon{\varepsilon}
\def\hat{\widehat}
\def\tilde{\widetilde}
\newcommand{\SE}{\setcounter{equation}{0} \section}
\newcommand{\be}{\begin{equation}}
\newcommand{\ee}{\end{equation}}
\newcommand{\baa}{\begin{array}}
\newcommand{\eaa}{\end{array}}
\newcommand{\ba}{\begin{eqnarray}}
\newcommand{\ea}{\end{eqnarray}}
\numberwithin{equation}{section}
\newtheorem{theo}{\bf Theorem}[section]
\newtheorem{lem}[theo]{\bf Lemma}
\newtheorem{pro}[theo]{\bf Proposition}
\newtheorem{rem}[theo]{\bf Remark}
\begin{document}
\date{\today}
\title[Spreading in space-time periodic media, Part 2]{Spreading in space-time periodic media governed by\\ a monostable  equation with free boundaries,\\ Part 2:  Spreading speed} 
\thanks{This work was supported by the Australian Research Council, the National Natural Science Foundation of China (11171319, 11371117) and the Fundamental Research Funds for the Central Universities. }
\author[w. Ding, Y. Du and X. Liang]{Weiwei Ding$^\dag$, Yihong Du$^\dag$ and Xing Liang$^\ddag$}
\thanks{
$^\dag$ School of Science and Technology, University of New England, Armidale, NSW 2351, Australia}
\thanks{$^\ddag$ School of Mathematical Sciences, University of Science and Technology of China, Hefei, Anhui, 230026, P.R. China}

\keywords{free boundary, space-time periodic media, spreading speed}

\begin{abstract}
This is Part 2 of our work aimed at classifying the long-time behavior of the solution to a free boundary problem with 
monostable reaction term in space-time periodic media. In Part 1 (see \cite{ddl}) we have established a theory on 
the existence and uniqueness of solutions to this free boundary problem with continuous initial functions, as well as
a spreading-vanishing dichotomy. 
We are now able to develop  the methods of Weinberger \cite{w1, w2} and others \cite{fyz,lyz,lz1,lz2,lui} to prove the 
existence of asymptotic spreading speed when spreading happens,  without knowing a priori the existence of the 
corresponding semi-wave solutions of the free boundary problem.  This is a completely different approach from earlier 
works on the free boundary model, where the spreading speed is determined by firstly showing the existence of a 
corresponding semi-wave. Such a semi-wave appears difficult to obtain by the earlier approaches in the case of space-time periodic media 
considered in our work here.
\end{abstract}

\maketitle


\SE{Introduction and main results}\label{sec1}
This is Part 2 of our work aimed at classifying the long-time dynamical behavior to a class of  space-time periodic reaction-diffusion equations with free boundaries of the form
\begin{equation}\label{eqf}\left\{\baa{ll}
u_t=du_{xx}+f(t,x,u),& g(t)<x<h(t),\; t>0,\vspace{3pt}\\
u(t,g(t))=u(t,h(t))=0,&t>0,\vspace{3pt}\\
g'(t)=-\mu u_x(t,g(t)),&t>0, \vspace{3pt}\\
h'(t)=-\mu u_x(t,h(t)),&t>0, \vspace{3pt}\\
g(0)=g_0,\; h(0)=h_0,\; u(0,x)=u_0(x),& g_0\leq x\leq h_0,\eaa\right.
\end{equation} 
where $x=g(t)$ and $x=h(t)$ are the moving boundaries to be determined together with $u(t,x)$, and $d$, $\mu$ are given positive constants.

The initial function $u_0$ belongs to $\mathcal{H}(g_0,h_0)$ for some $g_0<h_0$, where
\begin{equation*}
\mathcal{H}(g_0, h_0):=\Big\{\phi\in C([g_0,h_0]):\, \phi(g_0)=\phi(h_0)=0, \,\phi(x)>0 \hbox{ in }(g_0,h_0) \Big\}.
\end{equation*}

 The reaction term $f\!:\R^2\times\R^+\mapsto\R$ is continuous, of class $C^{\alpha/2,\alpha}(\R^2)$ in $(t,x)\in\R^2$ locally uniformly in $u\in\R^+$(with $0<\alpha<1$), and of class $C^{1}$ in $u\in\R^+$ uniformly in $(t,x)\in\R^2$. The basic assumptions on $f$ are:
\begin{equation}\label{zero}
f(t,x,0)=0 \quad \hbox{for all  } (t,x)\in\R^2,
\end{equation}
there exists
  $M>0$ such that 
\begin{equation}\label{hyp2}
f(t,x,u)\leq 0\,\hbox{ for all }\, (t,x)\in\R^2,\, u\geq M,
\end{equation}
and $f$ is  $\omega$-periodic in $t$ and $L$-periodic in $x$ for some positive constants $\omega$ and $L$, that is, 
\begin{equation}\label{period}
\left\{
\begin{array}{l}
f(t+\omega,x,u)=f(t,x,u)\\
 f(t,x+L,u)=f(t,x,u)
\end{array}\right. \hbox{ for all }\, (t,x)\in \R^2,\,u\geq 0.
\end{equation}

 In this work, we regard \eqref{eqf} as describing 
the spreading of a new or invasive species over a one-dimensional habitat, where  $u(t,x)$ represents the population density of 
the species at location $x$ and time $t$, the reaction term $f$ measures the growth rate, the free boundaries $x=g(t)$ and 
$x=h(t)$ stand for the edges of the expanding population range, namely the spreading fronts. The Stefan conditions $g'(t)=-\mu u_x(t,g(t))$ and $h'(t)=-\mu u_x(t,h(t))$ may be 
interpreted as saying that  the spreading front expands at a speed  proportional to the population gradient at the front;
a deduction of these conditions from ecological considerations can be found in \cite{BDK}. When 
$f(t,x,u)$ is periodic with respect to $x$ and $t$ as described in \eqref{period}, problem \eqref{eqf} represents spreading of the species in a  heterogeneous environment that 
is periodic in both space and time. 

 In the special case that the function $f$ does not depend on $x$ and $t$, and is of logistic type, that is, 
$$f(u)=u(a-bu)\hbox{ for some positive constants } a \hbox{ and } b,$$ 
such a problem was first studied in \cite{dlin} for the spreading of a new or invasive species. It is proved that, when
$$u_0\in C^2([g_0,h_0]),\, u_0(g_0)=u_0(h_0)=0, \,u_0(x)>0 \hbox{ in }(g_0,h_0), $$
there exists a unique solution $(u,g,h)$ with $u(t,x)>0$, $g'(t)<0$ and $h'(t)>0$ for all $t>0$ and $g(t)<x<h(t)$, 
and a spreading-vanishing dichotomy holds, namely, there is a barrier $R^*$ on the size of the population range, such that either 
\begin{itemize}
\item[(i)] {\bf Spreading}: the population range breaks the barrier at some finite time (i.e., $h(t_0)-g(t_0)\geq R^*$ for some $t_0\geq 0$), and then the free boundaries go to infinity as $t\to\infty$ (i.e., $\lim_{t\to\infty}h(t)=\infty$ and $\lim_{t\to\infty}g(t)=-\infty$), and the population spreads to the entire space and stabilizes at its positive steady state (i.e. $\lim_{t\to\infty}u(t,x)=a/b $ locally uniformly in $x\in\R$) or   

\item[(ii)] {\bf Vanishing}: the population range never breaks the barrier (i.e. $h(t)-g(t)< R^*$ for all $t>0$), 
and the population vanishes (i.e. $\lim_{t\to\infty}u(t,x)=0$). 
\end{itemize}

Moreover, when spreading occurs, the asymptotic spreading speed can be determined, i.e., 
$$\lim_{t\to\infty}-g(t)/t=\lim_{t\to\infty}h(t)/t=c,
$$
 where $c$ is the unique positive constant such that  the problem
\begin{equation*}
\left\{\baa{l}
dq_{xx}-cq_x+q(a-bq)=0, \;q(x)>0 \quad\hbox{for } x\in (0,\infty),\vspace{3pt}\\
q(0)=0, \quad \mu q_x(0)=c,\quad q(\infty)=1 
\eaa\right.
\end{equation*} 
has a (unique) solution $q$. Such a solution $q(x)$ is called a semi-wave with speed $c$.

These results have subsequently been extended to more general situations in several directions. But as we mentioned in the 
Introduction of Part 1 (\cite{ddl}), in all the previous works on this problem, the spreading speed is determined by the corresponding semi-wave solution which, in our current space-time periodic case, appears difficult to establish by adapting the existing approaches.

In this paper we use a different approach to determine the spreading speed for the space-time periodic case of problem \eqref{eqf} with a  monostable $f$. This approach is based on recent developments of Weinberger's ideas first appeared in \cite{w1}, where the existence of spreading speed for the corresponding Cauchy problem is proved without knowing the existence of the corresponding traveling wave solutions. 
In \cite{w1}, Weinberger
 established the existence of spreading speed for a scalar discrete-time recursion with a translation-invariant order-preserving monostable operator. Such a method was generalized in \cite{lui} to systems of discrete-time recursions, and then in \cite{w2} to scalar discrete-time recursions in spatially periodic habitats. The theory in \cite{lui,w1} was further developed in \cite{lz1} to the investigation of both discrete and continuous semiflows with a monostable structure, and then was extended to time-periodic semiflows in \cite{lyz}, to space-periodic semiflows in \cite{lz2}, and recently to space-time periodic semiflows in \cite{fyz}.

However, to adapt these ideas to treat our free boundary problem here, it is necessary to firstly extend the existence 
and uniqueness theory for \eqref{eqf} with $C^2$ initial functions (see \cite{dlin}) to the case that the initial functions are 
merely continuous, which has not been considered before and requires  new techniques. This step has now been carried out in Part 1 of this work.
Moreover, in Part 1, we have also proved the continuous dependence of the solution to the initial function, and established some comparison principles and a 
 spreading-vanishing dichotomy for \eqref{eqf}.  

With these preparations, we are now able to establish the existence of asymptotic spreading speed for \eqref{eqf}, by further developing the techniques of Weinberger \cite{w1, w2} and several other recent works \cite{fyz, lyz, lz1, lz2, lui}. To do this, we assume that the associated nonlinear term $f$ admits a {\it monostable} structure, characterized by the following assumption (H). 

\smallskip

\noindent{\bf Assumption (H):}

\begin{itemize}
\item[(i)]
{\it  The following problem
\begin{equation}\label{psteady11}
\left\{\baa{l}
p_t=dp_{xx}+f(t,x,p)\,\, \hbox{ in }\,(t,x)\in\R^2,\vspace{3pt}\\
p(t,x)\,\hbox{ is $\omega$-periodic in $t$ and $L$-periodic in $x$},\eaa\right.
\end{equation}   

admits a unique positive solution $p(t,x)\in C^{1,2}(\R^2)$;}
\item[(ii)] {\it for any $v_0\in C(\R)\cap L^\infty(\R)$ with
$\inf_{x\in\R} v_0(x)>0$, there holds
\begin{equation}\label{monoconver}
v(t+s,x;v_0) - p(t+s,x)\to 0\,\hbox{ as }\, s\to\infty  
\end{equation}
 uniformly in $ (t,x)\in [0,\infty)\times \R$,
where $v(t,x;v_0)$ is the solution of the Cauchy problem 
\begin{equation}\label{cauchy}
\left\{\baa{ll}
v_t=dv_{xx}+f(t,x,v),& x\in\R,\,t>0,\vspace{3pt}\\
v(0,x)=v_0(x),& \,x\in\R.\eaa\right.
\end{equation} 
}
\end{itemize}

Under the assumption (H), it is recently proved in \cite{fyz} that, the Cauchy problem \eqref{cauchy} has a rightward spreading speed $\bar{c}^*_+$ and a leftward spreading speed $\bar{c}^*_-$. More precisely, for any nonnegative non-null compactly supported initial datum $v_0$ with $v_0\leq p(0,x)$ for $x\in\R$, there holds 
\begin{equation*}
\left\{\baa{ll}
\lim_{t\to\infty} \sup_{x\in [-c_2t, c_1t]} \big|v(t,x,v_0)-p(t,x)\big|=0 & \hbox{ when }\,-\bar{c}^*_{-}< -c_2<c_1<\bar{c}^*_{+},\vspace{3pt}\\
\lim_{t\to\infty}\sup_{x\in (-\infty,-c'_2]\cup [c'_1t,+\infty)} v(t,x;v_0)=0 & \hbox{ when }\,c_2'>\bar{c}^*_{-}\hbox{ and }c_1'>\bar{c}^*_{+},\eaa\right.
\end{equation*}
where $v(t,x,v_0)$ is the unique solution of \eqref{cauchy}. In this current work, we show that under the same conditions, whenever spreading occurs,  the free boundary problem \eqref{eqf} also has a leftward and a rightward spreading speed.  More precisely, we have the following theorem.

\begin{theo}\label{spreadspeed}
Suppose that \eqref{zero}, \eqref{hyp2}, \eqref{period} and {\rm (H)} are satisfied.  Then there exist constants $c^*_{-,\mu}>0$ and  $c^*_{+,\mu}>0$ such that for any given $u_0\in\mathcal{H}(g_0,h_0)$ with $u_0(x)\leq p(0,x)$ in $\R$ such that 
$\lim_{t\to\infty}h(t)=\lim_{t\to\infty}-g(t)=\infty$ and 
\begin{equation}\label{assupcon}
\lim_{t\to \infty} \big |u(t,x)-p(t,x)\big |=0\,\hbox{ locally uniformly in }\,x\in\R, 
\end{equation}
the following conclusions hold:
\begin{equation}\label{spreadu}
\lim_{t\to\infty}\sup_{-c_2t\leq x\leq c_1t} \big|u(t,x)-p(t,x)\big|=0\,\hbox{ when }\, -c^*_{-,\mu}< -c_2<c_1<c^*_{+,\mu},
\end{equation} 
 and 
\begin{equation}\label{spreadgf}
\lim_{t\to\infty} \frac{g(t)}{t}=-c^*_{-,\mu},\qquad \lim_{t\to\infty} \frac{h(t)}{t}=c^*_{+,\mu}.
\end{equation}
Here $(u,g,h)$ is the solution to \eqref{eqf} with initial datum $u_0$ and $p$ is the unique positive solution of \eqref{psteady11}.
\end{theo}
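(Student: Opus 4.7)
The strategy is to view \eqref{eqf} as a discrete-time monotone dynamical system by iterating the period-$\omega$ solution map, and then apply the abstract Weinberger-type spreading speed theory for space-time periodic monostable semiflows developed in \cite{w1,w2,lui,lz1,lyz,lz2,fyz}. Concretely, for $u_0\in\mathcal{H}(g_0,h_0)$ extended by zero to $\R$, set $Q[u_0](x):=u(\omega,x)$, where $u$ is the solution of \eqref{eqf} and $u(\omega,\cdot)\equiv 0$ outside $[g(\omega),h(\omega)]$. Iteration gives $Q^n[u_0](x)=u(n\omega,x)$, while the $L$-periodicity of $f$ in $x$ yields the translation-periodic identity $Q\circ T_{kL}=T_{kL}\circ Q$ (with $T_y$ the shift by $y\in L\Z$), which is the ingredient required for the periodic-habitat version of Weinberger's framework.

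The first task is to verify the structural axioms for $Q$ on a suitable state space $X$ of nonnegative continuous functions decaying at infinity. Monotonicity of $Q$ and continuity in $u_0$ are inherited from the comparison principle and the continuous-dependence result in Part 1 \cite{ddl}; parabolic regularity supplies the requisite compactness; and the monostable structure, i.e.\ that $0$ and $p(0,\cdot)$ are the only relevant invariant profiles with $p(0,\cdot)$ attracting every spreading trajectory, follows from assumption (H) combined with the spreading-vanishing dichotomy of Part 1. The abstract theory, in particular the space-time periodic version in \cite{fyz}, then produces constants $c^*_{+,\mu},c^*_{-,\mu}>0$ characterized variationally through $Q$, together with the inner-cone convergence \eqref{spreadu}.

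It remains to establish \eqref{spreadgf}, namely that the free boundaries themselves move at these speeds. For the upper bound $\limsup_{t\to\infty}h(t)/t\le c^*_{+,\mu}$, one constructs space-time periodic supersolutions of \eqref{eqf} whose fronts move at any prescribed speed $c>c^*_{+,\mu}$ and invokes comparison, using \eqref{spreadu} to match the interior values. For the matching lower bound $\liminf_{t\to\infty}h(t)/t\ge c^*_{+,\mu}$, one constructs compactly supported subsolutions of \eqref{eqf} propagating at any $c<c^*_{+,\mu}$; the variational characterization delivered by the abstract theory supplies explicit candidates built from iterates of $Q$, and the Stefan condition $h'(t)=-\mu u_x(t,h(t))$ together with the Hopf boundary lemma converts interior propagation of positivity into outward motion of $h(t)$ (analogously for $g$).

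The principal obstacle is that the abstract theory is formulated for semiflows on a fixed function space such as $BUC(\R)$, while the natural state of \eqref{eqf} carries a moving support and zero-extension does not satisfy the PDE on all of $\R$. Bridging these two viewpoints requires (i) showing that zero-extension is compatible with monotonicity, continuity and compactness in the appropriate sense, and (ii) translating the propagation of level sets of $u$ into precise asymptotics for $g(t)$ and $h(t)$ through the Stefan condition. Step (ii), where the interior Weinberger dynamics must be reconciled with the free boundary dynamics, is where the main technical novelty of the proof is expected to lie.
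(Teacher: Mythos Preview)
Your outline captures the right high-level strategy---Poincar\'e map, monotonicity, Weinberger recursion---and you correctly flag the central difficulty. But the proposal has two genuine gaps that are not just matters of detail.

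\textbf{The abstract theory of \cite{fyz} does not apply directly.} You write that once the axioms are checked, ``the abstract theory \dots then produces constants $c^*_{+,\mu},c^*_{-,\mu}>0$ \dots together with the inner-cone convergence \eqref{spreadu}.'' This is precisely what fails. The periodic-habitat Weinberger machinery is built for recursions on a single function space containing both the compactly supported data and the equilibrium $p(0,\cdot)$, with suitable compactness. Here the Poincar\'e map changes character according to whether the support is finite, half-infinite, or all of $\R$, and the resulting operator on $\mathcal{C}$ does not satisfy the hypotheses of \cite{fyz} as stated. The paper does \emph{not} appeal to the abstract theorem: it first treats the one-sided problems \eqref{eqfunbd} and \eqref{eqfunbdn} separately, building the recursion from scratch on a custom two-variable function class $\mathcal{M}$ (Lemmas~\ref{wellde}--\ref{proc}), and only then passes to the two-sided problem. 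The inner-cone estimate \eqref{spreadu} for compactly supported data is obtained via a truncated operator $U_B$ and an explicit splice $e_n$ of leftward and rightward sub-recursions (Lemmas~\ref{trunctedp}--\ref{trsubsolution}); this glueing step has no counterpart in your sketch and is the actual substance of Section~\ref{sec4}.

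\textbf{The upper bound on $h(t)/t$ is not by supersolutions.} You propose to show $\limsup h(t)/t\le c^*_{+,\mu}$ by constructing supersolutions with fronts moving at any prescribed $c>c^*_{+,\mu}$. Such a construction would amount to exhibiting a semi-wave (or near-semi-wave) at speed $c^*_{+,\mu}$, which is exactly what the paper sets out to avoid---indeed the introduction stresses that semi-waves for space-time periodic media are not available by earlier methods. The paper's route is indirect: first establish $u_+(t,x)\to 0$ uniformly on $\{x\ge c't\}$ for every $c'>c^*_+$ by comparison with the one-sided recursion (Proposition~\ref{r-speed}, equation \eqref{rightd1}), and then argue by contradiction. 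If $h'(\tau_n)\ge c^*_++\delta/4$ along a sequence with $h(\tau_n)\ge(c^*_++\delta/4)\tau_n$, the Stefan condition forces $|u_x(\tau_n,h(\tau_n))|$ to be bounded below, and parabolic $C^2$ bounds then force $u$ to be bounded away from zero at a fixed distance behind $h(\tau_n)$---contradicting the decay just established. This is the content of the proof of \eqref{limsup} in Theorem~\ref{therspeed}, and it is not the mechanism you describe. (The only place a genuine supersolution appears is Proposition~\ref{r-speed-finite}, where a crude homogeneous KPP semi-wave is used merely to show $c_+<\infty$, not to identify it.)

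In short, your Step~(ii) is not where the work lies; the work is already in Step~(i), and it cannot be discharged by invoking \cite{fyz}.
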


The above theorem indicates that $c^*_{+,\mu}$ (resp. $c^*_{-,\mu}$) is the rightward (resp. leftward) spreading speed for problem \eqref{eqf}.

\begin{rem}\label{rmk1} The restriction $u_0(x)\leq p(0,x)$ in Theorem \ref{spreadspeed} is rather unnatural. We will show that it can be removed under mild additional assumptions on $f$ near $u=p(t,x)$; see Section 2.1 below for details.
\end{rem}
 
The proof of Theorem~\ref{spreadspeed} is based on ideas in \cite{fyz,lyz,lz1,lz2,w2}, but considerable technical changes are needed since the introduction of the free boundary here. As a consequence, the proof of Theorem~\ref{spreadspeed} is rather involved.

We now give some examples of nonlinearities $f$ for which the hypothesis (H) can be easily checked. The simplest example is the logistic nonlinearity 
\begin{equation}\label{logic}
f(t,x,u)=u\big(a(t,x)-b(t,x)u\big)
\end{equation}
where $a,\,b$ are of class $C^{\alpha/2,\alpha}$ which are $\omega$-periodic in $t$ and $L$-periodic in $x$, and there are positive constants $\kappa_1$, $\kappa_2$ such that 
$\kappa_1\leq a(t,x)\leq \kappa_2$ and $\kappa_1\leq b(t,x)\leq \kappa_2$ for all $(t,x)\in\R^2$.  It is well known that, with such a nonlinearity $f$, problem \eqref{psteady11} admits a unique positive solution $p(t,x)\in C^{1,2}(\R^2)$, \eqref{monoconver} holds and for any nonnegative bounded non-null initial function $v_0\in C(\R)$, there holds 
\begin{equation*}
v(t+s,x;v_0) - p(t+s,x)\to 0\,\hbox{ as }\, s\to\infty  \, \hbox{ locally uniformly in }\, (t,x)\in\R^2,
\end{equation*}
where $v(t,x;v_0)$ is the unique solution of the  Cauchy problem \eqref{cauchy}. In fact, these existence, uniqueness and stability results hold for more general $f$ satisfying (in addition to the basic assumptions \eqref{zero}, \eqref{hyp2} and \eqref{period}),
\begin{equation}\label{hyp1}
\forall\, (t, x) \in\R^2,\hbox{ the function }\, u\mapsto f(t,x,u)/u \,\hbox{ is decreasing for }\, u>0,
\end{equation}
and the generalized principal eigenvalue of the linearized problem (at $u=0$) is negative (see \cite{na1,na3}).

An example satisfying (H) but not \eqref{hyp1} is 
\begin{equation}\label{dege}
f(t,x,u)=a(t,x)u^k(1-u)\,\hbox{ for some }\, k>1,
\end{equation}
where $a(t,x)$ is a positive function of class $C^{\alpha/2,\alpha}$, and is $\omega$-periodic in $t$ and $L$-periodic in $x$.  It follows from \cite[Proposition 1.7]{na2} that $p(t,x)\equiv 1$ is the unique positive solution for problem \eqref{psteady11} with nonlinearity \eqref{dege}. A simple comparison argument involving a suitable ODE problem shows that  \eqref{monoconver} holds for such a nonlinearity. 

Regarding sufficient conditions for spreading to happen for \eqref{eqf},  when $f$ is of type \eqref{logic},  the spreading-vanishing dichotomy proved in Part 1 shows that there exists a positive constant $R$ (independent of $u_0$) such that $h_0-g_0\geq R$ implies  spreading. When $f$ is of type \eqref{dege}, sufficient conditions for spreading can be found in
\cite[Theorem 1.1 and Remark 2.4]{sun}.

Finally, let us consider the behavior of the spreading speeds for problem \eqref{eqf} as $\mu$ increases to $\infty$. We have the following theorem. 

\begin{theo}\label{limitmu}
Let $c^*_{\pm,\mu}$ be the spreading speeds obtained in Theorem~\ref{spreadspeed}. Then $c^*_{\pm,\mu}$ are nondecreasing in $\mu>0$, and 
\begin{equation*}
\lim_{\mu\to\infty} c^*_{-,\mu}=\bar{c}^*_- \quad\hbox{and}\quad \lim_{\mu\to\infty} c^*_{+,\mu}=\bar{c}^*_+,
\end{equation*}
where $\bar{c}^*_+$ $(\mbox{resp. } \bar{c}^*_-)$ is the rightward $($resp. leftward$)$ spreading speed for problem \eqref{cauchy}.
\end{theo}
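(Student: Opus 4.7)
\emph{Monotonicity and upper bound.} For $0<\mu_1<\mu_2$ and a common initial datum $u_0\in\mathcal H(g_0,h_0)$ for which spreading occurs, the $\mu$-monotonicity version of the comparison principle from Part 1 yields $g^{\mu_2}(t)\le g^{\mu_1}(t)$ and $h^{\mu_1}(t)\le h^{\mu_2}(t)$ for every $t>0$. Dividing by $t$ and applying Theorem \ref{spreadspeed} gives $c^*_{\pm,\mu_1}\le c^*_{\pm,\mu_2}$, so the monotone limits $c^*_{\pm,\infty}:=\lim_{\mu\to\infty}c^*_{\pm,\mu}\in(0,\infty]$ exist. To show $c^*_{\pm,\mu}\le\bar c^*_\pm$, I extend $u$ by zero outside $[g(t),h(t)]$ to obtain $\tilde u$ on $\R$: at each free boundary the spatial derivative of $\tilde u$ has an upward jump (from $-h'(t)/\mu<0$ inside to $0$ outside at $x=h(t)$, and symmetrically at $x=g(t)$), so $\tilde u_{xx}$ contains a nonnegative Dirac contribution there, making $\tilde u$ a distributional sub-solution of \eqref{cauchy}. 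Comparison with the Cauchy solution $v$ starting from $u_0$ then gives $\tilde u\le v$. For any $c_1\in(0,c^*_{+,\mu})$, Theorem \ref{spreadspeed} yields $u(t,c_1 t)\to p(t,c_1 t)$, hence $v(t,c_1 t)\ge u(t,c_1 t)\ge\tfrac12\min_{\R^2}p>0$ for all large $t$. The Cauchy spreading theorem of \cite{fyz} then forces $c_1\le\bar c^*_+$, and letting $c_1\nearrow c^*_{+,\mu}$ gives $c^*_{+,\mu}\le\bar c^*_+$; in particular $c^*_{+,\infty}\le\bar c^*_+$. The leftward case is symmetric.

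\emph{Matching lower bound.} Fix an arbitrary $c_0\in(0,\bar c^*_+)$. The plan is to construct a sub-solution $(\underline u,\underline g,\underline h)$ of \eqref{eqf} with $\underline h(t)=c_0 t+h_0$ and $\underline u(0,\cdot)\le u_0$, valid for all $\mu$ sufficiently large; by the comparison principle of Part 1 this forces $h^\mu(t)\ge\underline h(t)$, so $c^*_{+,\mu}\ge c_0$, and sending $c_0\nearrow\bar c^*_+$ will close the argument. The candidate is obtained by truncating the Cauchy solution $v$ of \eqref{cauchy} starting from $u_0$: fixing $c_1\in(c_0,\bar c^*_+)$, the Cauchy spreading theorem gives $v(t,c_0 t+h_0)\ge\tfrac12\min p=:\gamma>0$ for all large $t$. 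Smoothly interpolating $v$ down to $0$ over a boundary layer $[\underline h(t)-\delta,\underline h(t)]$ of width $\delta>0$ produces $\underline u$ with $-\underline u_x(t,\underline h(t))\ge\kappa(\gamma,\delta)>0$, so the Stefan inequality $\underline h'(t)=c_0\le\mu\kappa$ is satisfied whenever $\mu\ge c_0/\kappa$; on the left one takes $\underline g\equiv g_0$, which trivially satisfies $\underline g'=0\ge-\mu\underline u_x(t,\underline g(t))$.

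\emph{Main obstacle.} The hardest step is the verification of the PDE inequality $\underline u_t\le d\underline u_{xx}+f(t,x,\underline u)$ inside the boundary layer, where the cutoff contributes corrections of orders $\delta^{-1}$ and $\delta^{-2}$. Since semi-wave solutions are unavailable in the space-time periodic setting, the ODE reduction on a moving frame employed in \cite{dlin} is not at one's disposal; the error terms must instead be absorbed via sharp estimates on $v,v_x,v_{xx}$ across the level set $\{v=\gamma\}$ (combining parabolic regularity with the uniform convergence $v(t,\cdot)\to p(t,\cdot)$ on $|x|\le c_1 t$) together with the monostable structure of $f$ near the periodic equilibrium $p$. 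Once this delicate estimate is secured, the comparison of Part 1 closes the argument, and the leftward inequality follows by the symmetric construction, completing the proof that $c^*_{\pm,\mu}\nearrow\bar c^*_\pm$ as $\mu\to\infty$.
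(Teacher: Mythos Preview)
Your monotonicity and upper-bound arguments are essentially sound, though the paper obtains both through the recursion machinery rather than direct PDE comparison: it shows $a^c_{n,\mu_1}(\xi,x)\le a^c_{n,\mu_2}(\xi,x)\le \bar a^c_n(\xi,x)$ via the comparison principle applied to the iterates, then passes to the limits $n\to\infty$, $\xi\to\infty$.

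The lower bound, however, is where your proposal has a genuine gap. You correctly identify that the PDE inequality in the boundary layer is the crux, but you do not resolve it, and there are strong reasons to doubt the construction can be completed as sketched. In the layer $[\underline h(t)-\delta,\underline h(t)]$ your interpolant drops from roughly $\gamma$ to $0$ over width $\delta$, so $\underline u_t$ picks up a transport term of order $c_0\gamma/\delta$ from the moving front, while $d\underline u_{xx}$ is of order $\gamma/\delta^2$ with a sign that works \emph{against} you for any concave cutoff (and a convex cutoff spoils the slope control at $\underline h(t)$). The reaction term $f$ is merely bounded and cannot absorb either of these. The appeal to ``sharp estimates on $v,v_x,v_{xx}$'' and the ``monostable structure of $f$ near $p$'' is not a mechanism: near the artificial front $\underline u$ is far from $p$, so no linearisation at $p$ is available, and parabolic regularity of $v$ gives nothing that scales with $\delta$. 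There is also a secondary issue at the left boundary: taking $\underline g\equiv g_0$ with $\underline u=v$ there gives $\underline u(t,g_0)=v(t,g_0)>0$ for $t>0$, which violates the Dirichlet condition required of a subsolution in the comparison principle of Part~1.

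The paper avoids all of this by never constructing a PDE subsolution. It works entirely at the level of the discrete recursion: by a result of Du--Guo (\cite[Theorem~5.4]{dg2}) one has $h_{+,\mu}(\omega;\psi)\to\infty$ and $U_\mu[\psi]\to\bar U[\psi]$ in $C^{1+\alpha}_{\mathrm{loc}}$ as $\mu\to\infty$, which by induction gives $a^c_{n,\mu}(\xi+x,x)\to\bar a^c_n(\xi+x,x)$ locally uniformly for each fixed $n$. The decisive step is then purely combinatorial: for any $c'<\bar c_+$ the characterisation \eqref{procacu} provides $n_0$ with $\bar a^{c'}_{n_0}(h_0,x)>\phi(-\infty,x)$; the convergence just established, together with $L$-periodicity in $x$, yields $a^{c'}_{n_0,\mu}(h_0,x)>\phi(-\infty,x)$ for all large $\mu$; and Lemma~\ref{comkey} then forces $c'<c_{+,\mu}$. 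This is precisely the step your boundary-layer construction was meant to replace, and it requires no delicate PDE estimate.
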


The general strategy in proving  Theorems~\ref{spreadspeed} and \ref{limitmu} can be summarised as follows. We will first show the existence of rightward spreading speed for the following free boundary problem 
\begin{equation}\label{eqfunbd}\left\{\baa{ll}
u_t=du_{xx}+f(t,x,u),& -\infty<x<h(t),\quad t>0,\vspace{3pt}\\
u(t,h(t))=0,\,\,\, h'(t)=-\mu u_x(t,h(t)),&t>0, \vspace{3pt}\\
h(0)=h_0,\quad u(0,x)=u_0(x),& -\infty< x\leq h_0,\eaa\right.
\end{equation} 
with initial data $u_0\in {\mathcal{H}_+}(h_0)$, where
\begin{equation*}
{\mathcal{H}_+}(h_0):=\Big\{\phi\in C\big((-\infty,h_0]\big)\cap L^{\infty}\big((-\infty,h_0]\big):\,\phi(h_0)=0, \,\phi(x)>0 \hbox{ in }(-\infty,h_0) \Big\}.
\end{equation*} 
Then we will prove that this speed is indeed the rightward spreading speed for problem \eqref{eqf}, and that it converges to the rightward spreading speed for the Cauchy problem \eqref{cauchy} as $\mu\to\infty$. Similarly, to obtain the existence and convergence of leftward spreading speed for \eqref{eqf}, it suffices to prove these for the problem 
\begin{equation}\label{eqfunbdn}\left\{\baa{ll}
u_t=du_{xx}+f(t,x,u),& g(t)<x<\infty,\quad t>0,\vspace{3pt}\\
u(t,g(t))=0,\,\,\, g'(t)=-\mu u_x(t,g(t)),&t>0, \vspace{3pt}\\
g(0)=g_0,\quad u(0,x)=u_0(x),& g_0 \leq x < \infty,\eaa\right.
\end{equation} 
with initial data $u_0\in {\mathcal{H}_-}(g_0)$, where
\begin{equation*}
{\mathcal{H}_-}(g_0):=\Big\{\phi\in C\big([g_0,\infty)\big)\cap L^{\infty}\big([g_0,\infty)\big):\,\phi(g_0)=0, \,\phi(x)>0 \hbox{ in }(g_0,\infty) \Big\}.
\end{equation*}

\smallskip
\noindent{\bf Outline of the paper:} The remaining part of this paper is organized as follows. In Section~\ref{sec2}, we first give a proof for the statement in Remark \ref{rmk1} and then introduce some notations and state some common properties of the solutions to problems \eqref{eqf}, \eqref{eqfunbd} and \eqref{eqfunbdn}. Section~\ref{sec3} is devoted to the proof for the existence of spreading speeds for problems \eqref{eqfunbd} and \eqref{eqfunbdn}. The proof of Theorem~\ref{spreadspeed} is given in Section~\ref{sec4} and  the proof of Theorem~\ref{limitmu} is carried out in Section~\ref{sec5}.

\section{Preliminaries}\label{sec2}
In this section, we  prove the statement in Remark \ref{rmk1}, and then introduce some notations and basic facts to be used in the subsequent sections.

\subsection{On the condition $u_0(x)\leq p(0,x)$ in Theorem \ref{spreadspeed}}

The statement in Remark \ref{rmk1} clearly follows from the result below.

\begin{pro}\label{u<p}
Suppose that $f$ satisfies \eqref{zero}, \eqref{hyp2}, \eqref{period} and {\rm (H)}.  Suppose further  there exists $\epsilon_0>0$ small such that for every $(t,x)\in\R^2$, we have $f(t,x,\cdot)\in C^2(I_{t,x})$ with $I_{t,x}:=[(1-\epsilon_0)p(t,x), (1+\epsilon_0)p(t,x)]$, and
\begin{equation}
\label{f-p}
\frac{f(t,x,u)}{u} \mbox{ is nonincreasing in $u$},\; f_{uu}(t,x,u)\leq 0\;  \mbox{ for } u\in I_{t,x}.
\end{equation}
Let $u_0\in\mathcal{H}(g_0,h_0)$ and $(u,g,h)$ be the unique solution of \eqref{eqf}. 
Then there exists $t_0>0$ such that
\[
u(t_0,x)<p(0,x) \mbox{ for } x\in\R.
\]
\end{pro}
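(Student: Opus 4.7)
The plan is to find a time $t_0=k\omega$ with $k$ large, so that $u(t_0,x)<p(t_0,x)=p(0,x)$ for every $x\in\R$. This will proceed in three moves: first trap $u$ below $(1+\tfrac{\epsilon_0}{2})p$ using assumption {\rm (H)}(ii), then use the concavity of $f$ on $I_{t,x}$ to push $u$ down to $(1+\delta(t))p$ with $\delta(t)\to 0$, and finally upgrade this to the strict inequality $u<p$ by exploiting the zero Dirichlet condition at the free boundary.

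For the first move, extend $u$ by zero outside $[g(t),h(t)]$; this extension is a subsolution of the Cauchy problem \eqref{cauchy}, so $u\leq V$, where $V$ solves \eqref{cauchy} with the constant datum $M:=\max(\sup u_0,\sup p)+1$. Since $\inf V(0,\cdot)=M>0$, assumption {\rm (H)}(ii) gives $V(t,x)-p(t,x)\to 0$ uniformly in $x$, and so for $T_1$ large enough $u(T_1,x)\leq (1+\tfrac{\epsilon_0}{2})p(T_1,x)$ for every $x\in\R$. For the second move, I will verify that $\bar u(t,x):=(1+\delta(t))p(t,x)$, with $\delta(t):=\tfrac{\epsilon_0}{2}e^{-c_0(t-T_1)}$ and a suitable $c_0>0$, is a supersolution of the PDE in \eqref{eqf}. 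A Taylor expansion using $f_{uu}\leq 0$ on $I_{t,x}$ gives $f(t,x,(1+\delta)p)\leq f(t,x,p)+\delta p\,f_u(t,x,p)$, and with $a(t,x):=f(t,x,p)/p-f_u(t,x,p)\geq 0$ (by the non-increase of $f/u$) the required inequality reduces to $c_0\leq \inf_{t,x} a$. Comparison then gives $u(t,x)\leq (1+\delta(t))p(t,x)$ for all $t\geq T_1$ and $x\in\R$, with $\delta(t)\to 0$.

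The third move is the main technical hurdle, because the previous step only squeezes $u$ between $0$ and $(1+\delta(t))p$ without ever reaching $p$. I will work with $w:=p-u$ on the moving domain $[g(t),h(t)]$: the concavity inequality $f(t,x,u)-f(t,x,p)\leq f_u(t,x,p)(u-p)$ yields the parabolic inequality $w_t-dw_{xx}\geq f_u(t,x,p)w$, the lateral boundary supplies strictly positive data $w=p>0$ at $x\in\{g(t),h(t)\}$, and the previous step provides the bounded initial data $w(T_1,\cdot)\geq -\tfrac{\epsilon_0}{2}p$. Under monostability, the principal Floquet exponent of $L:=\partial_t-d\partial_{xx}-f_u(t,x,p)$ is negative, so on the expanding domain the boundary-driven solution of $L\zeta=0$ with data $\zeta=p$ stabilises to a strictly positive periodic-type profile bounded below by a positive constant, whereas the solution with zero boundary data and the above bounded initial data decays to $0$. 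By superposition and comparison, $w$ is bounded below by a positive constant in the interior for $t$ large; combined with $w=p>0$ outside $[g(t),h(t)]$, choosing $t_0=k\omega$ sufficiently large yields $u(t_0,x)<p(0,x)$ on all of $\R$, as required.
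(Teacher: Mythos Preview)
Your third move has two genuine gaps. First, the concavity inequality $f(t,x,u)-f(t,x,p)\le f_u(t,x,p)(u-p)$ is only guaranteed for $u\in I_{t,x}=[(1-\epsilon_0)p,(1+\epsilon_0)p]$ by hypothesis \eqref{f-p}; on the moving domain $u$ drops all the way to $0$ near the free boundaries, so the differential inequality $w_t-dw_{xx}\ge f_u(t,x,p)\,w$ is not established pointwise and the maximum principle cannot be applied. (It is true that $w>0$ exactly where $u$ is small, but you need the inequality to hold everywhere to run a comparison.) Second, even granting that inequality, the heuristic that the ``boundary-driven solution stabilises to a strictly positive profile bounded below'' is not correct on an unboundedly expanding domain: as $g(t),h(t)$ move outward, the boundary influence at any fixed interior point decays (for a model computation, solve $-d\zeta''+|c|\zeta=0$ on $[-R,R]$ with $\zeta(\pm R)=1$ and let $R\to\infty$), so your decomposition does not yield a uniform positive lower bound for $w$. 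The assertion that the principal Floquet exponent of $L$ is negative is also an additional hypothesis not implied by {\rm (H)}. Finally, your second move requires $\inf_{t,x}a>0$ to choose $c_0>0$, but the nonincrease of $f/u$ only gives $a\ge 0$; in any case that move is inessential to the argument.

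The paper avoids both difficulties by first reducing to the Cauchy problem: with $v_0=u_0$ (compactly supported) one has $u\le v$, so it suffices to prove $v(t_0,\cdot)\le p(0,\cdot)-\delta_0$ for some $t_0,\delta_0>0$ (Proposition~\ref{v<p}). To stay inside the concave region the paper works with $v_\epsilon:=\max\{v,(1-\epsilon)p\}$, which lies in $I_{t,x}$ once $v\le(1+\epsilon)p$; then $w_\epsilon:=v_\epsilon-p$ genuinely satisfies $(w_\epsilon)_t-d(w_\epsilon)_{xx}\le f_u(t,x,p)\,w_\epsilon$ on all of $\R$. The decisive step---showing that the associated linear solution $W$ with compactly supported nonnegative data tends to $0$ uniformly---is carried out \emph{without} any sign assumption on $\lambda_1$, via a superposition-of-translates argument: summing disjoint $L$-periodic shifts of $W$ remains dominated by the principal eigenfunction, which forces the putative positive limit to vanish. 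This is the idea your sketch is missing.
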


Let us note that the functions $f$ given in \eqref{logic} and \eqref{dege} also satisfy \eqref{f-p}.

To prove Proposition \ref{u<p}, we will use the following result on the corresponding Cauchy problem \eqref{cauchy}, which may have independent interest.
\begin{pro}\label{v<p}
Suppose that $f$ satisfies all the assumptions in Proposition \ref{u<p}. Let 
 $v(t,x)$ be the unique solution of \eqref{cauchy} with initial function $v_0\in C(\R)$ nonnegative and having compact support. 
 Then
 there exist $t_0>0$ and $\delta_0>0$ such that
\begin{equation}\label{v<p-delta}
v(t_0,x)\leq p(0,x)-\delta_0 \mbox{ for } x\in\R.
\end{equation}
\end{pro}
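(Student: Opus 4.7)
The plan is to sandwich $v$ between an auxiliary solution $\hat v$ staying strictly below $p$ and an upper bound $\hat v + w$, where $w=v-\hat v$ decays uniformly to zero via the concavity of $f$ near $p$; the desired gap then follows at a time $t_0=k\omega$.

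First I would define $\hat v$ as the solution of \eqref{cauchy} with initial datum $\hat v_0(x):=\min(v_0(x),p(0,x))$, which is continuous, nonnegative, compactly supported, and satisfies $\hat v_0\le p(0,\cdot)$. Comparison with $p$ gives $\hat v\le p$, while $\hat v_0\le v_0$ gives $\hat v\le v$. Since $\hat v_0$ vanishes outside $\mathrm{supp}\,v_0$ whereas $p(0,\cdot)\ge p_{\min}>0$, $\hat v_0\not\equiv p(0,\cdot)$, so the strong maximum principle applied to $p-\hat v$ yields $\hat v(t,x)<p(t,x)$ strictly for all $t>0$, $x\in\R$. Combined with the decay $\hat v(t,x)\to 0$ as $|x|\to\infty$ (compact support of $\hat v_0$ plus parabolic estimates), this forces
\[
\hat\delta(t):=\inf_{x\in\R}\bigl[p(t,x)-\hat v(t,x)\bigr]>0\quad\text{for every }t>0.
\]

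Next I would derive uniform decay of $w:=v-\hat v\ge 0$, which satisfies the linear parabolic equation
\[
w_t=d\,w_{xx}+c(t,x)\,w,\qquad c(t,x)=\int_0^1 f_u\!\bigl(t,x,\hat v+\theta w\bigr)\,d\theta,
\]
with bounded, compactly supported initial datum $(v_0-p(0,\cdot))^+$. The concavity $f_{uu}\le 0$ near $p$ yields $c(t,x)\le f_u(t,x,\hat v(t,x))$, and the spreading $\hat v\to p$ inside the cone (from \cite{fyz}) makes $f_u(t,x,\hat v)$ asymptotically close to $f_u(t,x,p)$ there. The principal Floquet eigenvalue $\mu_1$ of $\partial_t-d\partial_{xx}-f_u(t,x,p)$ acting on space-time periodic functions is strictly positive: testing with $\phi=p$ gives $Lp=p\bigl[f(t,x,p)/p-f_u(t,x,p)\bigr]\ge 0$, strict because $f/u$ is strictly decreasing at $p$ by \eqref{f-p}. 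Using the associated positive periodic eigenfunction $\phi_1$ to build a supersolution $Ae^{-\mu_1 t}\phi_1(t,x)$ of the $w$-equation for $t$ large (and combining with $w\le v$ and $v(t,\cdot)\to 0$ at infinity to control the region outside the cone), I conclude $\sup_{x\in\R}w(t,x)\to 0$ as $t\to\infty$. To finish, pick $k\in\mathbb{N}$ large enough so that $t_0:=k\omega$ satisfies $\sup_x w(t_0,x)\le\tfrac12\hat\delta(t_0)$, and set $\delta_0:=\tfrac12\hat\delta(t_0)>0$. Since $p(t_0,\cdot)=p(0,\cdot)$ by $\omega$-periodicity,
\[
v(t_0,x)=\hat v(t_0,x)+w(t_0,x)\le\bigl[p(t_0,x)-\hat\delta(t_0)\bigr]+\tfrac12\hat\delta(t_0)=p(0,x)-\delta_0\quad\text{for every }x\in\R.
\]

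The hard part is the decay of $w$ in the second step. Outside the spreading cone both $v$ and $\hat v$ tend to $0$, where the linearization $f_u(\cdot,\cdot,0)$ is typically positive in the monostable setting, so naive linearization around $p$ does not control $w$ there; one must exploit $w\le v$ together with $v(t,x)\to 0$ as $|x|\to\infty$, which follows from compactness of $\mathrm{supp}\,v_0$ and parabolic estimates. A further subtlety is that $\hat\delta(t_0)\to 0$ as $k\to\infty$, so the decay of $w$ must be at least as fast; the concavity assumption \eqref{f-p} is precisely what guarantees this rate through the Floquet analysis.
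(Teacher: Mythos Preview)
Your outline has the right instinct --- linearize near $p$ and exploit $f_{uu}\le 0$ --- but two steps do not go through as written.

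First, you assert that the principal periodic eigenvalue $\mu_1$ of $\partial_t-d\partial_{xx}-f_u(t,x,p)$ is strictly positive, arguing that $f/u$ is ``strictly decreasing at $p$''. Hypothesis \eqref{f-p} only gives $f/u$ \emph{nonincreasing} on $I_{t,x}$, so testing with $p$ yields $Lp\ge 0$ but not $Lp\gneqq 0$; you have not ruled out $\mu_1=0$, and with $\mu_1=0$ the barrier $Ae^{-\mu_1 t}\phi_1$ gives no decay. The paper's argument is designed to work regardless of the sign of the eigenvalue $\lambda_1$: it passes to $V=e^{\lambda_1 t}w_\epsilon$, compares with a solution $W$ of the linear equation having compactly supported initial data, and forces $W\to 0$ uniformly by a translation trick (stack the $L$-periodic shifts $W^j(t,x)=W(t,x+jn_0L)$, whose initial supports are disjoint; their sum is still dominated by $M_0\phi_1$, which is impossible unless each tends to zero).

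Second, and more structurally, your scheme compares two quantities that both vanish as $t\to\infty$: you need $\sup_x w(t_0,\cdot)\le\tfrac12\hat\delta(t_0)$, but $w=v-\hat v$ and $p-\hat v$ are governed near $p$ by the \emph{same} linearized operator, so they typically decay at the same exponential rate and the inequality need not hold for any $t_0$. You flag this in your last sentence but do not resolve it. Relatedly, the bound $c(t,x)\le f_u(t,x,\hat v)$ requires $\hat v$ and $v$ to lie in $I_{t,x}$, which fails wherever $\hat v$ is small (outside the spreading cone); the ``combining'' you allude to is precisely the hard part and is not carried out. The paper sidesteps both issues by working not with your $\hat v$ but with $v_\epsilon:=\max\{v,(1-\epsilon)p\}$: this guarantees $(1-\epsilon)p\le v_\epsilon\le(1+\epsilon)p$ for all $x$ once $t$ is large, so the Taylor expansion with $f_{uu}\le 0$ applies globally, and one shows directly that $w_\epsilon:=v_\epsilon-p$ becomes uniformly negative, rather than racing two vanishing gaps against each other.
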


\begin{proof}
Due to \eqref{f-p}, for $\epsilon\in (0,\epsilon_0]$, 
\[
[(1-\epsilon)p]_t-d[(1-\epsilon)p]_{xx}\leq f(t,x, (1-\epsilon)p) \mbox{ for } t,\, x\in\R.
\]
It follows that
\[
v_\epsilon(t,x):=\max\{v(t,x), (1-\epsilon)p(t,x)\}
\]
satisfies, in the weak sense,
\[
(v_\epsilon)_t-d(v_\epsilon)_{xx}\leq f(t,x, v_\epsilon) \mbox{ for } t>0,\, x\in\R.
\]

For clarity, we divide the argument below into several steps.
\smallskip

\noindent
{\bf Step 1.}
Define
\[
w_\epsilon(t,x):=v_\epsilon(t,x)-p(t,x).
\]
We show that for all large $t>0$, say $t\geq T_0$,
\begin{equation}
\label{w-ep}
(w_\epsilon)_t-d(w_\epsilon)_{xx}\leq f_u(t,x,p(t,x))w_\epsilon \mbox{ for all } x\in\R.
\end{equation}

Clearly
\[
(w_\epsilon)_t-d(w_\epsilon)_{xx}\leq f(t,x,v_\epsilon)-f(t,x,p).
\]
By \eqref{monoconver} and a simple comparison argument,  for all large $t$, say $t\geq T_0=T_0(\epsilon)$, $v(t,x)\leq (1+\epsilon)p(t,x)$ for $x\in\R$. It follows that
\[
(1-\epsilon)p(t,x)\leq v_\epsilon(t,x)\leq (1+\epsilon)p(t,x) \mbox{ for all } x\in\R,\; t\geq T_0.
\]
Hence by the Taylor expansion and \eqref{f-p} we obtain, for $t\geq T_0$ and $x\in\R$,
\begin{align*}
&f(t,x,v_\epsilon)-f(t,x,p)\\
&=f_u(t,x,p)(v_\epsilon-p)+\frac12f_{uu}(t,x,\theta_\epsilon)(v_\epsilon-p)^2\\
&\leq f_u(t,x,p)w_\epsilon 
\end{align*}
since
\[
\theta_\epsilon=\theta_\epsilon(t,x)\in [(1-\epsilon)p(t,x), (1+\epsilon)p(t,x)].
\]
This proves \eqref{w-ep}.

\smallskip

\noindent
{\bf Step 2.} Comparison via a linear equation.

In this step, we obtain an upper bound for $w_\epsilon$ by making use of \eqref{w-ep} and
 the following eigenvalue problem
\[
\left\{
\begin{array}{l}
\phi_t-d\phi_{xx}-f_u(t,x, p(t,x))\phi =\lambda\phi\; \mbox{ for } (t,x)\in\R^2,\smallskip\\
 \phi>0 \mbox{ and is $\omega$-periodic in $t$, $L$-periodic in $x$}.
\end{array}
\right.
\]

It is well known that this eigenvalue problem has an eigenpair $(\lambda, \phi)=(\lambda_1,\phi_1)$ (see \cite{na1}).
So we have
\[
(\phi_1)_t-d(\phi_1)_{xx}=a(t,x)\phi_1 \mbox{ for } t,\; x\in\R,
\]
where 
\[
a(t,x):=f_u(t,x,p(t,x))+\lambda_1.
\]

Set
\[
V(t,x):=e^{\lambda_1t}w_\epsilon(t,x).
\]
From \eqref{w-ep} we obtain
\[
V_t-dV_{xx}\leq a(t,x) V \mbox{ for } t\geq T_0,\; x\in\R.
\]

By our assumption on $f$, there exists $K>0$ such that
\[
f(t,x, v(t,x))\leq K v(t,x) \mbox{ for all } t>0,\; x\in\R.
\]
Hence
\[
v_t-dv_{xx}\leq Kv,\; v(0,x)=v_0(x).
\]
Since $v_0$ has compact support, this implies, by the heat kernel expression of $v$, that for every fixed $t>0$,
\[
v(t,x)\to 0 \mbox{ as } |x|\to\infty.
\]
(See Lemma 2.2 in \cite{dm} for a simple proof.)
Thus we can find $l_0>0$ large so that
\[
v(T_0,x)\leq \frac 12 (1-\epsilon)p(T_0,x) \mbox{ for } |x|\geq l_0.
\]
It follows that
\[
V(T_0,x)=-e^{\lambda_1 T_0}\epsilon p(T_0,x) \mbox{ for } |x|\geq l_0.
\]
Therefore we can find $\delta>0$ small such that
\[
V(T_0,x)+\delta \phi_1(T_0,x)<0 \mbox{ for } |x|\geq l_0.
\]

We now define
\[
W_0(x):=\max\{0, V(T_0,x)+\delta \phi_1(T_0,x)\}.
\]
Clearly $W_0(x)=0$ for $|x|\geq l_0$.  Moreover, 
\[
\tilde V(t,x):=V(t,x)+\delta \phi_1(t,x)
\]
satisfies
\[
\left\{
\begin{array}{ll}
\tilde V_t-d \tilde V^\epsilon_{xx}\leq a(t,x)\tilde V & \mbox{ for } t> T_0,\; x\in\R,\smallskip\\
\tilde V(T_0,x)\leq W_0(x) & \mbox{ for } x\in\R.
\end{array}
\right.
\]
Let $W(t,x)$ be the unique solution to
\[\left\{\begin{array}{ll}
W_t-dW_{xx}=a(t,x)W & \mbox{ for } t>T_0,\; x\in\R,\smallskip\\
W(T_0,x)=W_0(x) &\mbox{ for } x\in\R.
\end{array}
\right.
\]
Then clearly
\begin{equation}\label{tilde-V-W}
e^{\lambda_1t}w_\epsilon(t,x)+\delta\phi_1(t,x)=\tilde V(t,x)\leq W(t,x) \mbox{ for } t\geq T_0, \; x\in\R.
\end{equation}
This is the estimate for $w_\epsilon$ we wanted to obtain in this step.

\smallskip

\noindent
{\bf Step 3.}
We prove that
\[
\lim_{t\to\infty} W(t,x)=0 \mbox{ uniformly in } x\in\R.
\]

If $W_0(x)\equiv 0$, then $W(t,x)\equiv 0$ and there is nothing left to prove. So assume that $W_0\not\equiv 0$. Since $W_0\in L^\infty(\R)$, there exists $M_0>0$ such that
\[
W_0(x)\leq M_0 \phi_1(t,x) \mbox{ for } t,\; x\in\R.
\]
It follows that $W(t,x)\leq M_0\phi_1(t,x)$ for all $t>T_0$ and $x\in\R$. Since $W_0(x)$ has compact support, and
$a\in L^\infty(\R^2)$, as before we have
\[
\lim_{|x|\to\infty} W(t,x)=0 \mbox{ for any fixed } t>T_0.
\]
Therefore, for each $t>T_0$, there exists $M(t)>0$ and $x_t\in\R$ such that
\[
W(t,x)\leq M(t)\phi_1(t,x) \mbox{ for all } x\in\R,\; W(t,x_t)=M(t)\phi_1(t, x_t).
\]
$M(t)$ must be nonincreasing in $t$, since if $T_0<t_1<t_2$, then from $W(t_1,x)\leq M(t_1)\phi_1(t_1,x)$ and the comparison principle we deduce
\[
W(t,x)\leq M(t_1)\phi_1(t,x) \mbox{ for } t>t_1,\; x\in\R.
\]
Hence $M(t_2)\leq M(t_1)$. We may then define
\[M_\infty:=\lim_{t\to\infty} M(t).
\]
Clearly $M_\infty\geq 0$. If $M_\infty=0$ then it follows immediately that $\lim_{t\to\infty} W(t,x)=0$ uniformly in $x\in\R$, as required. 

If $M_\infty>0$, we are going to derive a contradiction. Choose an increasing sequence $\{t_n\}$ such that
$\lim_{n\to\infty} t_n=\infty$, and denote $x_n:=x_{t_n}$. So we have
\[
W(t_n,x_n)=M(t_n)\phi_1(t_n, x_n).
\]
Set
\[
W_n(t,x)=W(t_n+t, x_n+x),
\]
and write
\[
t_n=k_n\omega +\tilde t_n,\; x_n=l_nL+\tilde x_n \mbox{ with } k_n, l_n\in\N,\; \tilde t_n\in [0, \omega),
\; \tilde x_n\in [0,L).
\]
Then
\[
(W_n)_t-d(W_n)_{xx}=a(\tilde t_n+t, \tilde x_n+x) W_n.
\]
By passing to a subsequence we may assume that
\[
\lim_{n\to\infty}\tilde t_n=\tilde t\in [0,\omega],\; \lim_{n\to\infty} \tilde x_n=\tilde x\in [0,L].
\]
By standard parabolic estimates and a diagonal argument, we may also assume (by passing to a further subsequence) that $W_n\to W_\infty$ in $C_{loc}^{1,2}(\R\times \R)$. It follows that
\begin{equation}\label{W-infty}
(W_\infty)_t-d(W_\infty)_{xx}=a(\tilde t+t, \tilde x+x)W_\infty \mbox{ for } t,\; x\in\R.
\end{equation}
Since
\[
W_n(t,x)\leq M(t_n+t)\phi_1(t_n+t,x_n+x)=M(t_n+t)\phi_1(\tilde t_n+t,\tilde x_n+x)
\]
and
\[
W_n(0,0)=M(t_n)\phi_1(\tilde t_n,\tilde x_n),
\]
we further have
\[
W_\infty(t,x)\leq M_\infty \phi_1(\tilde t+t, \tilde x+x),\; W_\infty(0,0)=M_\infty \phi_1(\tilde t, \tilde x).
\]
As $M_\infty \phi_1(\tilde t+t,\tilde x+x)$ also solves \eqref{W-infty}, the strong maximum principle infers that
\[
W_\infty(t,x)\equiv M_\infty \phi_1(\tilde t+t, \tilde x+x).
\]

We now fix $n_0\in\N$ large such that
\[
n_0L>2l_0,
\]
and define, for $j,\, n\in\N$,
\[
W^j(t,x):=W(t, x+jn_0L),\; W^j_n(t,x):=W^j(t_n+t, x_n+x).
\]
Then
\begin{equation}
\label{W-j}
W^j_t-dW^j_{xx}=a(t,x+jn_0L)W^j=a(t,x)W^j
\end{equation}
and
\[
W_n(t,x)=W^j(t_n+t, x_n-jn_0L+x)=W^j_n(t, x-jn_0L).
\]
Hence, after passing to the same subsequence (independent of $j$),
\[
\lim_{n\to\infty}W_n^j(t,x-jn_0L)= W_\infty(t,x)\equiv M_\infty\phi_1(\tilde t+t, \tilde x+x) \mbox{ in } C^{1,2}_{loc}(\R\times \R).
\]
It follows that
\[
\lim_{n\to\infty} W_n^j(t,x)=M_\infty \phi_1(\tilde t+t, \tilde x+x+jn_0L)=M_\infty\phi_1(\tilde t+t,\tilde x+x),
\]
and for each $k\in\N$,
\[
\lim_{n\to\infty} \Sigma_{j=1}^k W^j_n(t,x)=kM_\infty\phi_1(\tilde t+t,\tilde x+x).
\]

On the other hand, we note that due to the choice of $n_0$, $\{W^j(T_0,x)\}_{j\in\N}$ is a sequence of nonnegative functions with disjoint supporting sets, and for each $k\in\N$, 
\begin{equation}
\label{W-j-initial}
\Sigma_{j=1}^kW^j(T_0,x)\leq M_0\phi_1(T_0,x)
\end{equation}
since
\[
W^j(T_0,x)\leq M_0\phi_1(T_0,x+jn_0L)=M_0\phi_1(T_0,x) \mbox{ for } x\in [-l_0-jn_0L, l_0-jn_0L]
\]
and
$W^j(T_0,x)=0$ for $x\not\in [-l_0-jn_0L, l_0-jn_0L]$.

By \eqref{W-j} we see that $\Sigma_{j=1}^kW^j(t,x)$ and $ M_0\phi_1(t,x)$ satisfy the same linear differential equation, and so, in view of \eqref{W-j-initial}, we can apply the comparison principle to deduce
\[
\Sigma_{j=1}^kW^j(t,x)\leq  M_0\phi_1(t,x)\leq M_0\|\phi_1\|_\infty \mbox{ for } t>T_0,\; x\in\R.
\]
It follows that
\[
\Sigma_{j=1}^kW_n^j(t,x)\leq  M_0\|\phi_1\|_\infty \mbox{ for } t>T_0-t_n,\; x\in\R.
\]
Letting $n\to\infty$ we thus obtain
\[
kM_\infty \phi_1(\tilde t+t, \tilde x+x)\leq M_0\|\phi_1\|_\infty \mbox{ for } t,\; x\in\R.
\]
Since $\sigma_0:=\min_{(t,x)\in\R^2}\phi_1(t,x)>0$, the above inequality leads to a contradiction if we choose $k\in\N$ large enough. This proves our claim that $M_\infty=0$. Thus we always have
\[
\lim_{t\to\infty} W(t,x)=0 \mbox{ uniformly for } x\in\R.
\]

\noindent
{\bf Step 4.} Completion of the proof.

Since $\phi_1(t,x)\geq \sigma_0$, we have
\[
V(t,x)+\delta\sigma_0\leq \tilde V(t,x)\leq W(t,x) \mbox{ for } t>T_0,\; x\in\R.
\]
Choose $T_1>T_0$ such that $W(t,x)\leq \frac12 \delta\sigma_0 $ for $x\in\R$ and $t\geq T_1$. Then
\[
V(t,x)\leq -\frac12 \delta\sigma_0<0 \mbox{ for } x\in\R,\; t\geq T_1.
\]
Therefore, for $t\geq T_1$,
\[
w_\epsilon(t,x)=e^{-\lambda_1t}V(t,x)\leq-\frac 12 \delta\sigma_0 e^{-\lambda_1 t}<0 \mbox{ for } x\in\R.
\]
It follows that
\[
v(t,x)\leq v_\epsilon(t,x)=w_\epsilon(t,x)+p(t,x)\leq p(t,x)-\frac 12 \delta\sigma_0 e^{-\lambda_1 t} \mbox{ for } t\geq T_1,\; x\in\R.
\]
Taking $k\in\N$ such that $t_0:=k\omega\geq T_1$, and denoting $\delta_0:=\frac 12 \delta\sigma_0 e^{-\lambda_1 t_0}$, we then obtain
\[
v(t_0,x)=v(k\omega, x)\leq p(k\omega, x)-\delta_0=p(0,x)-\delta_0 \mbox{ for } x\in\R.
\]
So \eqref{v<p-delta} holds and the proof is complete.
\end{proof}
\begin{proof}[Proof of Proposition \ref{u<p}] We use Proposition \ref{v<p} with $v_0=u_0$. By the comparison principle we have
\[
u(t,x)\leq v(t,x) \mbox{ for } t>0,\, x\in\R.
\]
By \eqref{v<p-delta} we thus have $u(t_0,x)\leq v(t_0,x)<p(0,x)$ for $x\in\R$.
\end{proof}

\subsection{Notations and basic facts} From now on, we always assume that 
\[
\mbox{$f$ satisfies \eqref{zero}, \eqref{hyp2}, \eqref{period} and {\rm (H)}.}
\]
For any $h_0\in\R$ and any $u_0\in\mathcal{H}_+(h_0)$, 
 the unique solution of equation \eqref{eqfunbd} with initial value $u_+(0,x) = u_0(x)$ in $(-\infty, h_0]$
 is denoted by 
 $\big(u_+(t,x;u_0),h_+(t;u_0)\big)$; for any $g_0\in\R$ and any $u_0\in\mathcal{H}_-(g_0)$,  $\big(u_-(t,x;u_0),g_-(t;u_0)\big)$ denotes the unique solution of equation \eqref{eqfunbdn} 
with initial value $u_-(0,x) = u_0(x)$ in $[g_0,\infty)$; for any finite pair $g_0<h_0$ and any $u_0\in\mathcal{H}(g_0,h_0)$, we use $\big(u(t,x;u_0),g(t;u_0),h(t;u_0)\big)$ to denote the unique solution of equation 
\eqref{eqf} with initial value $u(0,x) = u_0(x)$ in $[g_0, h_0]$. 
Finally for any $v_0\in C(\R)\cap L^\infty(\R)$, we let $v(t,x;v_0)$ denote the unique solution of the Cauchy problem \eqref{cauchy} with initial function $v_0(x)$.
 
Let $p(t,x)$ be the unique positive solution for problem~\eqref{psteady11}, and let $\mathcal{C}$ be the subset of $C(\R)$ defined by
\begin{equation*}\left.\baa{ll}
\mathcal{C}:=\Big\{\varphi\in  C(\R):&\!\!\! \hbox{there exists } g_0\in [-\infty,\infty) \hbox{ and } h_0\in (-\infty,\infty] \hbox{ with } g_0 < h_0 \hbox{ such that } \vspace{3pt}\\
&\!\!\! 0<\varphi(x)\leq p(0,x)\hbox{ for }  x\in(g_0,h_0), \hbox{ and }  \varphi(x)=0 \hbox{ for } x\in\R\setminus [g_0,h_0] \Big\}.\eaa\right.
\end{equation*}
For the sake of convenience, for any given $\varphi\in\mathcal{C}$ with $g_0\in [-\infty,\infty)$ and $h_0\in (-\infty,\infty]$ such that $\varphi(x)>0$ if and only if $g_0<x<h_0$, we call $g_0$ the left supporting point of $\varphi$, and $h_0$ the right supporting point of $\varphi$.

We now define an operator $U$ generated by the Poincar\'{e} map of the solution to  problems \eqref{eqf}, \eqref{eqfunbd},  \eqref{eqfunbdn} or \eqref{cauchy}, depending on the nature of  $\varphi\in \mathcal{C}$ in the following way.
 Suppose that $\varphi$ has left supporting point $g_0$ and right supporting point $h_0$. 
\begin{itemize}
\item If $-\infty<g_0< h_0<\infty$, then 
\begin{equation*}
U[\varphi](x):=\left\{\baa{ll}
u(\omega,x;\varphi),& \hbox{ for }\, g(\omega; \varphi)\leq x \leq h(\omega; \varphi),\vspace{3pt}\\
0,&\hbox{ for }\, x> h(\omega; \varphi) \hbox{ or } x< h(\omega; \varphi),\eaa\right.
\end{equation*} 
where $(u,g,h)$ is the unique solution of \eqref{eqf} with initial function $\varphi$;
\item
if $-\infty=g_0< h_0<\infty$, then 
\begin{equation*}
U[\varphi](x):=\left\{\baa{ll}
u_+(\omega,x;\varphi),& \hbox{ for }\, x \leq h_+(\omega; \varphi),\vspace{3pt}\\
0,&\hbox{ for }\, x> h_+(\omega; \varphi),\eaa\right.
\end{equation*} 
where $(u_+,h_+)$ is the unique solution of \eqref{eqfunbd} with initial function $\varphi$;
\item if $-\infty<g_0< h_0=\infty$, then 
\begin{equation*}
U[\varphi](x):=\left\{\baa{ll}
u_-(\omega,x;\varphi),& \hbox{ for }\, x\geq g_-(\omega; \varphi),\vspace{3pt}\\
0,&\hbox{ for }\, x< g_-(\omega; \varphi),\eaa\right.
\end{equation*} 
where $(u_-,g_-)$ is the unique solution of \eqref{eqfunbdn} with initial function $\varphi$;
\item if $g_0=-\infty$ and $h_0=\infty$, then 
\begin{equation*}
U[\varphi](x):=v(\omega,x;\varphi)\, \hbox{ for all  }\, x\in\R,
\end{equation*} 
where $v$ is the unique solution of \eqref{cauchy} with initial function $\varphi$.
\end{itemize}

By the conclusions in Part 1 (\cite{ddl}) and hypothesis (H) here, it is easy to check that $U$ maps $\mathcal{C}$ into itself and has the following properties:

\begin{itemize}
\item[(A1)] $U:\mathcal{C}\to \mathcal{C}$ is order-preserving in the sense that $U[\varphi_1](x)\geq U[\varphi_2](x)$ for $x\in\R$ whenever $\varphi_1(x)\geq \varphi_2(x)$ for $x\in\R$.

\item[(A2)] $U$ is periodic with respect to $L\Z$ in the sense that $T_y\big[U[\varphi]\big]=U\big[T_y[\varphi]\big]$ for all $\varphi\in\mathcal{C}$ and $y\in L\Z$, where $T_y:\mathcal{C}\to\mathcal{C}$ is the translation operator defined by $T_y[\varphi]=\varphi[\cdot-y]$.

\item[(A3)] $U:\mathcal{C}\to \mathcal{C}$ is continuous in the sense that for any sequence $\varphi_n\in \mathcal{C}$ with left supporting points $g_n\in [-\infty,\infty)$ and right supporting points $h_n\in (-\infty,\infty]$, and any $\varphi\in \mathcal{C}$ with left supporting point $g\in [-\infty,\infty)$ and right supporting point $h\in(-\infty,\infty]$, if $\varphi_n(x)$ converges to $\varphi(x)$ locally uniformly for $x\in\R$ as $n\to\infty$, and $g_n$ converges to $g$,  $h_n$ converges to $h$ as $n\to\infty$, then $U[\varphi_n](x)$ converges to $U[\varphi](x)$ locally uniformly in $x\in\R$ as $n\to\infty$.

 \item[(A4)] $U:\mathcal{C}\to \mathcal{C}$ is monostable in the sense that $0$ and $p(0,x)$ are the only fixed points of $U$ in $\mathcal{C}$. Moreover, if $w\in\mathcal{C}$ and $w(x)\geq \epsilon$ for some $\epsilon>0$, then $\lim_{n\to\infty} U^n[w](x)=p(0,x)$ uniformly in $x\in\R$. 

\end{itemize}

Moreover, we have the following
 comparison principle, which follows easily from the above properties and an induction argument.

\begin{pro}\label{opcompare}
Let $U_1$ and $U_2$ be two order-preserving operators defined on $\mathcal{C}$ as described above. Suppose that the sequence $(v_n)_{n\in\N}\subset \mathcal{C}$ satisfies $v_{n+1}(x)\geq U_1[v_n](x)$ for $x\in\R$, and that the sequence $(u_n)_{n\in\N}\subset \mathcal{C}$ satisfies $u_{n+1}(x)\leq U_2[u_n](x)$ for $x\in\R$. Suppose also that $U_1[\varphi](x)\geq U_2[\varphi](x)$ for all $\varphi\in\mathcal{C}$, $x\in\R$ and that $v_0(x)\geq u_0(x)$ for $x\in\R$. Then $v_n(x)\geq u_n(x)$ for all $x\in\R$, $n\in\N$.
\end{pro}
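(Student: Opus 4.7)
The statement is a straightforward induction on $n$, so my plan is simply to verify that the hypotheses combine cleanly at each step and that no membership issues arise along the way.

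The base case $n=0$ is nothing more than the assumption $v_0(x)\geq u_0(x)$ for $x\in\R$. For the inductive step, I would assume $v_n(x)\geq u_n(x)$ for all $x\in\R$ and derive the corresponding inequality at level $n+1$ by chaining four estimates. First, since $U_1$ is order-preserving (property (A1) applied to $U_1$), the inductive hypothesis yields $U_1[v_n](x)\geq U_1[u_n](x)$ for every $x\in\R$. Next, the hypothesis $U_1[\varphi](x)\geq U_2[\varphi](x)$ for all $\varphi\in\mathcal{C}$ applied with $\varphi=u_n$ gives $U_1[u_n](x)\geq U_2[u_n](x)$. Finally, the sub/super-solution inequalities satisfied by the two sequences provide $v_{n+1}(x)\geq U_1[v_n](x)$ and $u_{n+1}(x)\leq U_2[u_n](x)$. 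Concatenating these four inequalities gives
\[
v_{n+1}(x)\;\geq\; U_1[v_n](x)\;\geq\; U_1[u_n](x)\;\geq\; U_2[u_n](x)\;\geq\; u_{n+1}(x),
\]
which closes the induction.

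The only thing one needs to check is that each quantity appearing in the chain above is well defined, i.e.\ that the operators $U_1$ and $U_2$ can legitimately be applied to the relevant functions. This is immediate from the assumption $(u_n)_{n\in\N},(v_n)_{n\in\N}\subset\mathcal{C}$ together with the fact, recorded just before the proposition, that any operator $U$ of the type described maps $\mathcal{C}$ into $\mathcal{C}$; in particular $U_1[v_n]$, $U_1[u_n]$ and $U_2[u_n]$ all belong to $\mathcal{C}$, so the pointwise comparisons make sense on all of $\R$.

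There is no real obstacle here: the proof is essentially a one-line induction. The only mildly delicate point worth mentioning explicitly is the asymmetry between the roles of $U_1$ and $U_2$ — the larger operator $U_1$ is used to push the lower bound forward through its order-preserving property applied to $u_n$, while the dominance $U_1\geq U_2$ is what allows one to swap operators before invoking the upper bound $u_{n+1}\leq U_2[u_n]$. Once this ordering is observed, the four-step chain above writes itself.
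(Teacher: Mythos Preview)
Your proof is correct and is exactly the induction argument the paper has in mind; the paper does not spell out the details but simply remarks that the proposition ``follows easily from the above properties and an induction argument.'' Your four-step chain is the natural way to unwind this, and your observation that only the order-preserving property of $U_1$ (not $U_2$) is actually needed is accurate.
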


\section{Existence of spreading speed for problems \eqref{eqfunbd} and \eqref{eqfunbdn}}\label{sec3}
In this section, we give a detailed proof for the fact that the free boundary problem \eqref{eqfunbd} admits a spreading speed in the rightward direction. The existence of leftward spreading speed for problem \eqref{eqfunbdn} 
follows from the result for \eqref{eqfunbd} with $f(t,x,u)$ replaced by $f(t,-x,u)$.    Let us recall that  $f$ always satisfies \eqref{zero}, \eqref{hyp2}, \eqref{period} and {\rm (H)}.

First, we define a set $\mathcal{M}$ consisting of functions $\phi(\xi,x)$ in $ C(\R^2)$ with the following properties:
\begin{equation}\label{M}
\left.\begin{array}{l}
\mbox{(a) \; For each $\xi\in\R$, $\phi(\xi,x)$ is nonnegative and $L$-periodic in $x$;} \\
\mbox{(b) \;  $\phi(\xi,x)$ is uniformly continuous in $(\xi,x)\in\R^2$;}\\
\mbox{(c) \;  For each fixed $x$, $\phi(\xi,x)$ is nonincreasing in $\xi$;  }
\\
\mbox{(d) \; For any $\xi\in\R$, there exists a real number $H_0=H_0(\xi)$ such that}\\
\hspace{1cm}\mbox{ $\phi(\xi+x,x)>0$ for $x<H_0$ and $\phi(\xi+x,x)=0$ for $x\geq H_0$;}
\\
\mbox{(e) \; $0<\phi(-\infty,x)<p(0,x)$ for all $x\in\R$.
}
\end{array}
\right\}
\end{equation}

For each $\alpha\in\R$ and $\beta\in (0,1)$, let
\[
\tau(\xi):=\beta\frac{\max\{\alpha-\xi,0\}}{\max\{\alpha-\xi, 0\}+1}.
\]
Clearly the function $\tau(\xi)p(0,x)$ belongs to $\mathcal{M}$.

\smallskip

The above properties of $\phi\in\mathcal{M}$  imply the following one.

\begin{lem}\label{maxmin}
For any $\phi\in\mathcal{M}$, there exists $H_2\geq H_1$ such that $\phi(\xi,\cdot)\equiv 0$ if $\xi\geq H_2$ and $\phi(\xi,x)>0$ for all $x\in\R$ if $\xi<H_1$. 
\end{lem}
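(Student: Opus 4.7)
My plan is to establish the two parts of the lemma separately by two short, essentially independent arguments: a monotonicity/periodicity trick will produce $H_2$, and a Dini-type uniform-convergence argument will produce $H_1$. The ordering $H_2\geq H_1$ will then be automatic, since the two defining properties cannot simultaneously hold for any single $\xi$.

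For $H_2$, I would fix an arbitrary $\xi_0\in\R$ and apply property (d) to obtain the threshold $H_0=H_0(\xi_0)$, so that $\phi(\xi_0+x,x)=0$ for all $x\geq H_0$. The candidate is $H_2:=\xi_0+H_0+L$. Given any $y\geq H_2$ and any $x\in\R$, the $L$-periodicity in (a) lets me pick $x'\in[H_0,H_0+L)$ with $x-x'\in L\Z$, whence $\phi(y,x)=\phi(y,x')$. Since $\xi_0+x'\leq \xi_0+H_0+L\leq y$, the monotonicity in (c) gives $\phi(y,x')\leq \phi(\xi_0+x',x')$, and this last quantity vanishes by (d) because $x'\geq H_0$. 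Hence $\phi(y,\cdot)\equiv 0$ whenever $y\geq H_2$.

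For $H_1$, I would first exploit (c) and (e) to obtain the monotone pointwise limit $\phi(-\infty,x):=\lim_{\xi\to-\infty}\phi(\xi,x)=\sup_{\xi\in\R}\phi(\xi,x)>0$ for every $x\in\R$. Passing to $\xi\to-\infty$ in the uniform-continuity estimate of (b) shows that $\phi(-\infty,\cdot)$ is itself uniformly continuous (and $L$-periodic), so $m:=\inf_{x\in[0,L]}\phi(-\infty,x)=\inf_{x\in\R}\phi(-\infty,x)>0$. Since the continuous functions $\phi(\xi,\cdot)$ converge monotonically (in $\xi$) to the continuous function $\phi(-\infty,\cdot)$ on the compact interval $[0,L]$, Dini's theorem upgrades this to uniform convergence on $[0,L]$, and $L$-periodicity extends it to all of $\R$. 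Consequently there exists $H_1\in\R$ such that $\phi(\xi,x)\geq m/2>0$ for every $\xi<H_1$ and every $x\in\R$. The inequality $H_2\geq H_1$ is automatic, for otherwise any $\xi\in[H_2,H_1)$ would satisfy both $\phi(\xi,\cdot)\equiv 0$ and $\phi(\xi,\cdot)>0$; should the first construction produce an $H_2$ that is smaller than $H_1$, I would simply redefine $H_2:=\max\{H_1,H_2\}$.

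The main obstacle I anticipate is the $H_1$ step: assumption (e) only delivers pointwise positivity of the limit $\phi(-\infty,x)$, whereas the lemma demands uniform positivity of $\phi(\xi,\cdot)$ on all of $\R$ for all sufficiently negative $\xi$; a priori one could fear a sequence $(\xi_n,x_n)$ with $\xi_n\to-\infty$ and $\phi(\xi_n,x_n)\to 0$. Closing this gap genuinely uses the joint strength of monotonicity (c), uniform continuity (b), positivity of the limit (e), and the compactness afforded by the $L$-periodicity in (a), packaged through Dini's theorem. The construction of $H_2$, by contrast, is straightforward bookkeeping with (a), (c), and (d).
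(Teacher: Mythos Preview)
Your proof is correct. Your construction of $H_2$ is essentially identical to the paper's. Your construction of $H_1$, however, differs genuinely from the paper's argument. The paper treats $H_1$ and $H_2$ symmetrically, using only properties (a), (c), and (d): from $\phi(\xi_0+x,x)>0$ for $x\in[H_0-2L,H_0-L]$ and monotonicity (c), one gets $\phi(\xi_0+H_0-2L,x)>0$ on that interval, and then $L$-periodicity (a) extends this to all $x$, yielding $H_1:=\xi_0+H_0-2L$. Your route instead invokes (b), (e), and Dini's theorem to deduce uniform positivity of $\phi(\xi,\cdot)$ for large negative $\xi$. The paper's approach is lighter, avoids (b) and (e) entirely, and makes the symmetry between the two thresholds transparent; your approach, while heavier, has the minor bonus of producing a uniform quantitative lower bound $m/2$ on $\phi(\xi,\cdot)$ for $\xi<H_1$, which the paper's argument does not directly yield.
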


\begin{proof}
 By property (d) in \eqref{M}, for any $\xi_0\in\R$, there exists $H_0=H_0(\xi_0)$ such that $\phi(\xi_0+x,x)>0$ if and only if $x< H_0$. By (c) we have
\[
\phi(\xi_0+H_0-2L,x)\geq \phi(\xi_0+x,x)>0 \mbox{ for } x\in[H_0-2L, H_0-L].
\]
By (a) this implies that $\phi(\xi_0+H_0-2L, x)>0$ for all $x\in\R$. Hence, in view of (c),
\[
\phi(\xi,x)>0 \mbox{ for } \xi< H_1:=\xi_0+H_0-2L,\; x\in\R.
\]

Similarly, 
\[
\phi(\xi_0+H_0+L,x)\leq \phi(\xi_0+x,x)=0 \mbox{ for } x\in[H_0, H_0+L].
\]
By (a) this implies that $\phi(\xi_0+H_0+L, x)=0$ for all $x\in\R$. Hence, in view of (c),
\[
\phi(\xi,x)=0 \mbox{ for } \xi\geq  H_2:=\xi_0+H_0+L,\; x\in\R.
\]
The proof of Lemma~\ref{maxmin} is thereby complete.
\end{proof}

For any $\phi\in\mathcal{M}$, and any fixed $\xi_0\in\R$, by \eqref{M}, the function $x\to \phi(\xi_0+x, x)$ belongs to $\mathcal{C}$, with right supporting point  $H_0=H_0(\xi_0)\in\R$. Therefore $U[\phi(\xi_0+\cdot,\cdot)](x)$ is well-defined, and
\[
U[\phi(\xi_0+\cdot,\cdot)](x):=\left\{\baa{ll}
u_+(\omega,x; \phi(\xi_0+\cdot,\cdot)),& \hbox{ if }\, x\leq h_+(\omega; \phi(\xi_0+\cdot,\cdot)),\vspace{3pt}\\
0,&\hbox{ if }\, x> h_+(\omega; \phi(\xi_0+\cdot,\cdot)).\eaa\right.
\]

For $(\xi, y)\in\R^2$,
we now define the operator $Q_+$ on $\mathcal{M}$ by 
\begin{equation}\label{defirq}
Q_+[\phi](\xi,y):=U[\phi(\xi-y+\cdot,\cdot)](y)\,\,\hbox{ for any }\, \phi\in \mathcal{M}. 
\end{equation}
As will become clear below, we will make use of $Q_+$ and its iterations to determine the rightward spreading speed.

We now examine the properties of $Q_+$. 
\begin{lem}\label{wellde}
$Q_+$ maps $\mathcal{M}$ to $\mathcal{M}$, and $Q_+$ is order preserving in the sense that $Q_+[\phi_1]\geq Q_+[\phi_2]$ whenever $\phi_1\geq \phi_2$ in $\mathcal{M}$.
\end{lem}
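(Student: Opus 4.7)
The plan is to show that for every $\phi\in\mathcal{M}$ and every $(\xi,y)\in\R^2$ the profile
\[
\psi_{\xi,y}(x):=\phi(\xi-y+x,x)
\]
lies in $\mathcal{C}$, so that $Q_+[\phi](\xi,y)=U[\psi_{\xi,y}](y)$ is defined through the left-unbounded free boundary problem \eqref{eqfunbd}, and then to verify the five defining properties of $\mathcal{M}$ for $Q_+[\phi]$. Membership of $\psi_{\xi,y}$ in $\mathcal{C}$ is immediate: property (d) of $\phi$ applied at $\xi-y$ gives $\psi_{\xi,y}(x)>0$ exactly on $(-\infty,H_0(\xi-y))$, and (c) together with (e) yields the bound $\psi_{\xi,y}(x)\leq\phi(-\infty,x)<p(0,x)$. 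Order preservation of $Q_+$ then drops out at once: if $\phi_1\geq\phi_2$ in $\mathcal{M}$, then the corresponding profiles satisfy $\psi^{(1)}_{\xi,y}\geq\psi^{(2)}_{\xi,y}$ pointwise, and (A1) gives $Q_+[\phi_1](\xi,y)\geq Q_+[\phi_2](\xi,y)$.

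I would dispatch the structural properties (a), (c), (d), (e) in turn. For (a), nonnegativity is obvious, and $L$-periodicity in $y$ follows from $L$-periodicity of $\phi$ in its second argument: one checks $\psi_{\xi,y+L}=T_L\psi_{\xi,y}$, so by (A2)
\[
Q_+[\phi](\xi,y+L)=U[T_L\psi_{\xi,y}](y+L)=T_L\bigl[U[\psi_{\xi,y}]\bigr](y+L)=Q_+[\phi](\xi,y).
\]
Property (c) is a direct consequence of (c) for $\phi$ combined with (A1). For (d), the decisive identity is
\[
Q_+[\phi](\xi+y,y)=U[\phi(\xi+\cdot,\cdot)](y),
\]
so the map $y\mapsto Q_+[\phi](\xi+y,y)$ is the $U$-image of a single element of $\mathcal{C}$ with right supporting point $H_0(\xi)$, hence its own right supporting point is the finite number $\widetilde H_0(\xi):=h_+(\omega;\phi(\xi+\cdot,\cdot))$. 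For (e), the monotone limit $Q_+[\phi](-\infty,y):=\lim_{\xi\to-\infty}Q_+[\phi](\xi,y)$ exists by (c), and a parabolic compactness argument (using that the right supports of $\psi_{\xi,y}$ tend to $+\infty$ as $\xi\to-\infty$) identifies it with $v(\omega,y;\phi(-\infty,\cdot))$. Since $\phi(-\infty,\cdot)$ is $L$-periodic with $0<\phi(-\infty,x)<p(0,x)$, applying the strong maximum principle to $p-v$ on $[0,\omega]\times\R$ delivers $0<v(\omega,y;\phi(-\infty,\cdot))<p(0,y)$ for every $y$, which is (e) for $Q_+[\phi]$.

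The real work, and the step I expect to be the main obstacle, is the uniform continuity (b). My plan is to exploit the $L$-periodicity just established to reduce the question to $(\xi,y)\in\R\times[0,L]$, and then split the $\xi$-axis into three regimes. By Lemma~\ref{maxmin}, for $\xi\geq H_2+L$ the input $\psi_{\xi,y}$ vanishes identically for every $y\in[0,L]$, so $Q_+[\phi]\equiv 0$ in that region. On any compact $\xi$-interval, uniform continuity reduces to joint continuity of $(\xi,y)\mapsto U[\psi_{\xi,y}](y)$ via (A3); the required locally uniform convergence of profiles follows from uniform continuity of $\phi$, while convergence of the right supports $H_0(\xi-y)$ holds at every continuity point of the monotone function $H_0$, the countably many jump points being handled by squeezing between $Q_+[\phi](\xi\pm\varepsilon,y)$ using monotonicity (c). Finally, for $\xi\to-\infty$ I would show $Q_+[\phi](\xi,y)\to v(\omega,y;\phi(-\infty,\cdot))$ uniformly in $y\in[0,L]$, exploiting the uniformity gained from monotonicity in $\xi$ together with the continuity of the limit profile. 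Pasting these three regimes on their overlaps produces uniform continuity on $\R\times[0,L]$, and hence on $\R^2$, completing the verification that $Q_+$ maps $\mathcal{M}$ into itself.
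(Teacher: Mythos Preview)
Your proposal is correct and follows essentially the same five-step structure as the paper's proof, verifying each defining property of $\mathcal{M}$ for $Q_+[\phi]$ via the corresponding properties (A1)--(A4) of $U$. The chief difference is that you supply considerably more detail for uniform continuity (b), including the regime decomposition in $\xi$ and the treatment of potential jump points of $H_0$; the paper disposes of (b) in a single sentence invoking (A3) and the uniform continuity of $\phi$, so your extra care there is not misplaced.
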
   

\begin{proof}
To prove that $Q_+$ maps $\mathcal{M}$ into itself, it is sufficient to prove that, for any $\phi\in\mathcal{M}$, $Q_+[\phi](\xi,y)$  has the properties stated in \eqref{M}. In what follows, we divide the proof into five steps, and in each step, we show one property.   

{\it Step 1: We prove that $Q_+[\phi](\xi,y)$ is  nonnegative and is $L$-periodic in $y$.} The non-negativity is clear from the definition. It remains to show that it is $L$-periodic in $y$.

Since the operator $U$ is periodic with respect to $L\Z$ in the sense of (A2), it is easy to check that 
$$U[\phi(\cdot+\xi,\cdot)](y+L)= U[\phi(\cdot+L+\xi,\cdot+L)](y)\,\,\hbox{ for all }\, \xi\in\R,\,y\in\R.$$
This together with the $L$-periodicity of $\phi$ in the second variable implies that 
\begin{equation*}
\begin{split}
Q_+[\phi](\xi,y+L)&=U[\phi(\cdot-y-L+\xi,\cdot)](y+L)\\
&= U[\phi(\cdot-y+\xi,\cdot+L)](y)\\
&=U[\phi(\cdot-y+\xi,\cdot)](y)\\
&=Q_+[\phi](\xi,y)\,\,\hbox{ for all } \xi\in\R,\, y\in\R.
\end{split}
\end{equation*}
Thus, $Q_+[\phi](\xi,y)$ is $L$-periodic in $y$.

{\it Step 2: We prove that $Q_+[\phi](\xi,y)$ is uniformly continuous in $(\xi,y)\in\R^2$}.  This  is a consequence of
 the continuity of the operator $U$ in the sense of (A3) and the uniform continuity of the function $\phi(\xi,x)$ with respect to $(\xi,x)\in\R^2$.

{\it Step 3: We prove that $Q_+[\phi](\xi,y)$ is nonincreasing in $\xi$.}  By (A1) we have 
$$Q_+[\phi_1](\xi,y)\geq Q_+[\phi_2](\xi,y)\,\hbox{ for all } (\xi,y)\in\R^2,$$ 
whenever $\phi_1\geq \phi_2$ in $\mathcal{M}$. This together with the property that $\phi(\xi,x)$ is nonincreasing in $\xi$ implies that  $Q_+[\phi](\xi,y)$ is also nonincreasing in $\xi$.

{\it Step 4: We prove that for any $\xi\in\R$, there exists $H\in\R$ depending on $\xi$ such that 
 $Q_+[\phi](\xi+y,y)=0$ if and only if $x\geq H$.}  
As a matter of fact, for any given $\xi\in\R$, let $\big(u_+(t,x),h_+(t)\big)$ be the solution of equation \eqref{eqfunbd} with initial value 
$$u_+(0,x) = \phi(x+\xi,x)\,\hbox{ in } \big(-\infty, H_0(\xi)\big].$$ 
Set $H=h_+(\omega)$. Then by the definitions of $Q_+$ and $U$, it is easy to see that $H$ is the desired critical number.

{\it Step 5: We prove that the limit $Q_+[\phi](-\infty,y)$ exists and $0<Q_+[\phi](-\infty,y)<p(0,y)$ for all $y\in\R$.}  To do so, we choose a sequence $(\xi_n)_{n\in\N}\subset\R$ such that  $\xi_n$ is nonincreasing in $n$ and that $\xi_n\to-\infty$ as $n\to\infty$. Due to the property (b) in \eqref{M}, $\phi(-\infty,x)$ is continuous in $x$.  Furthermore, since $\phi(\xi_n+x,x)$ is nondecreasing in $n$, and equi-continuous in $x$,  $\phi(\xi_n+x,x)$ converges to $\phi(-\infty,x)$ locally uniformly in $x\in\R$ as $n\to\infty$. It then follows from the continuity of the operator $U$ in the sense of (A3) that 
$$U[\phi(\xi_n+\cdot,\cdot)](y)\to U[\phi(-\infty,\cdot)](y) \,\hbox{ as } n\to\infty \hbox{ locally uniformly in } y\in\R,  $$ 
that is, 
$$Q_+[\phi](\xi_n+y,y) \to U[\phi(-\infty,\cdot)](y)\,\hbox{ as }\,n\to\infty\,\hbox{ locally uniformly in }\,y\in\R.$$
 Since the nonincreasing sequence $(\xi_n)_{n\in\N}$ can be chosen arbitrarily,  the limit $Q_+[\phi](-\infty,y)$ exists and 
 $$Q_+[\phi](-\infty,y)=U[\phi(-\infty,\cdot)](y).$$ Furthermore, 
by the property (e) in \eqref{M}, it follows from the parabolic strong maximum principle that 
$$0<U[\phi(-\infty,\cdot)](y)<p(0,y)\, \hbox{ for all } y\in\R.$$ Thus, $Q_+[\phi](\xi,y)$ possesses the property stated in (e). 

Therefore, the operator $ Q_+$ maps $\mathcal{M}$ into itself. Lastly, the order-preserving property of $Q_+$ follows easily from that of $U$ stated in (A1). The proof of Lemma~\ref{wellde} is thereby complete.
\end{proof}


We now fix an arbitrary $\phi\in\mathcal{M}$ and, for any $c\in\R$, we define the sequence $\{(a_n^c,H_n^c)\}_{n\in\N}$ by the following recursions
\begin{equation}\label{recuran}
a_{n}^c(\xi,x)=\max\Big\{\phi(\xi,x), \,Q_+[a_{n-1}^c](\xi+c,x) \Big\} 
\end{equation}
and 
\begin{equation}\label{recurhn}
H_{n}^c(\xi)=\max\Big\{H_0(\xi),\,h_+\big(\omega;a_{n-1}^c(\cdot+\xi+c,\cdot)\big) \Big\}
\end{equation}
where $  a_{0}^c(\xi,x)=\phi(\xi,x)$
and $H_0(\xi)$ is the real number such that $\phi(\xi+x,x)=0$ if and only if $x\geq H_0(\xi)$. 

By Lemma~\ref{wellde}, it is easily seen that for each $n\in\N$, $a_{n}^c\in\mathcal{M}$.  Further properties are given below.

\begin{lem}\label{propan} The following statements are valid:

\begin{itemize} 
\item[(i)] For any fixed $n\in\N$, $0\leq a_{n}^c(\xi,x) \leq  p(0,x)$ for all $\xi\in\R$, $x\in\R$, and $a_{n}^c(\xi+x,x)=0$ if and only if $x\geq H_{n}^c(\xi)$.

\item[(ii)] The sequence $a_n^c(\xi,x)$ is nondecreasing in $n$, nonincreasing in $\xi$ and $c$, $L$-periodic in $x$, and the sequence $H_n^c(\xi)$ is nondecreasing in $n$.

\item[(iii)] For any fixed $n\in\N$,  $a_n^c(\xi,x)$ is uniformly continuous in $(c,\xi,x)\in\R^3$ and $H_n^c(\xi)$ is uniformly continuous in $(c,\xi)\in\R^2$.

\item[(iv)] $\big\{ a_n^c(\xi+x,x):\,\xi\in\R,\,c\in\R,\,n\in\N \big\}$ is a family of equicontinuous functions of $x\in\R$.
\end{itemize}
\end{lem}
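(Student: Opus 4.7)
The plan is to prove (i)--(iv) simultaneously by induction on $n\ge 0$, exploiting the fact that the recursion \eqref{recuran}--\eqref{recurhn} is built from two already well understood ingredients: the initial datum $\phi\in\mathcal{M}$, whose structural properties are listed in \eqref{M}, and the operator $Q_+$, which was shown in Lemma~\ref{wellde} to map $\mathcal{M}$ into itself in an order-preserving way. The base case $n=0$ is immediate, since $a_0^c=\phi$ does not depend on $c$, satisfies $\phi(\xi,x)\le\phi(-\infty,x)<p(0,x)$ by properties (c) and (e) of \eqref{M}, and inherits uniform continuity and equicontinuity in $x$ directly from property (b).

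For the inductive step I would first handle (i). The upper bound $a_n^c\le p(0,x)$ comes from comparing $U[a_{n-1}^c(\xi+c-x+\cdot,\cdot)](x)$ with the Cauchy problem starting from $p(0,\cdot)$, whose $t=\omega$ evaluation is $p(\omega,x)=p(0,x)$ by $\omega$-periodicity; the free boundary problem is dominated by the corresponding Cauchy problem with the same bounded initial data via Proposition~\ref{opcompare}. For the zero-set identity, I would unravel the recursion: $a_n^c(\xi+x,x)=0$ iff both $\phi(\xi+x,x)=0$ and $U[a_{n-1}^c(\xi+c+\cdot,\cdot)](x)=0$, which hold iff $x\ge H_0(\xi)$ and $x\ge h_+(\omega;a_{n-1}^c(\xi+c+\cdot,\cdot))$ respectively, and the maximum of these two thresholds is by definition $H_n^c(\xi)$.

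The monotonicity statements of (ii) follow from three short inductions. Monotonicity in $n$ uses the order preservation of $Q_+$; monotonicity in $\xi$ uses that $Q_+[a_{n-1}^c]\in\mathcal{M}$ is nonincreasing in its first argument; monotonicity in $c$ (for $c'\ge c$) requires the double estimate
\[
Q_+[a_{n-1}^{c'}](\xi+c',x)\le Q_+[a_{n-1}^c](\xi+c',x)\le Q_+[a_{n-1}^c](\xi+c,x),
\]
combining the inductive hypothesis with the $\xi$-monotonicity just established. $L$-periodicity in $x$ is inherited from $\phi$ and from Lemma~\ref{wellde}; and $H_n^c$ is nondecreasing in $n$ because $h_+(\omega;\cdot)$ is monotone in its initial datum by the comparison principles of Part~1.

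For the regularity statements, (iv) will be the easier half: every initial datum $a_{n-1}^c(\xi+c+\cdot,\cdot)$ has $L^\infty$ norm at most $\|p(0,\cdot)\|_\infty$ by part (i), so interior parabolic Schauder estimates together with the Hopf-type boundary regularity at the free boundary, both available from Part~1, yield a Lipschitz constant in $x$ for $U[a_{n-1}^c(\xi+c+\cdot,\cdot)]$ independent of $(c,\xi,n)$; adding the uniform modulus of continuity of $\phi$ from property (b) gives the equicontinuity of (iv). The main obstacle is (iii), namely uniform continuity on the unbounded parameter space $\R^3$. My plan is to combine the continuous dependence of the solution of \eqref{eqfunbd} on its initial data from Part~1 with $L$-periodicity in $x$ to reduce the question to a compact region in $x$, and to use the $\xi$-monotonicity together with the existence of limits of $a_{n-1}^c$ as $\xi\to\pm\infty$ (guaranteed by Lemma~\ref{maxmin} and property (e) of $\mathcal{M}$) to extend uniformity across $\xi\in\R$; since $c$ enters the recursion only through the shift $\xi+c$ and through the inductive $c$-dependence of $a_{n-1}^c$ itself, the $c$-dependence is handled by the same argument. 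Uniform continuity of $H_n^c(\xi)$ then follows from the continuous dependence of the free boundary on the initial datum.
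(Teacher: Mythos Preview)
Your approach is essentially the same as the paper's: both proceed by induction on $n$, and for the only part the paper argues explicitly---namely (iv)---you invoke exactly the right mechanism, the Lipschitz regularity of the time-$\omega$ map with constant depending only on $\omega$ and the $L^\infty$ bound on the initial data (this is \cite[Remark~2.12]{ddl}), combined with the uniform bound $a_n^c\le p(0,\cdot)$ from part (i). The paper disposes of (i)--(iii) in a single sentence (``easily proved by an induction argument''), so your more detailed sketch of those parts is additional rather than different.
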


\begin{proof}
We only give the proof for the statement (iv), since the statements (i)-(iii) are easily proved by an induction argument.
For any fixed $\xi\in\R$ and $c\in\R$, let $\big(u_+(t,x),h_+(t)\big)$ be the solution of equation \eqref{eqfunbd} with initial value 
$$u_+(0,x) = \phi(x+\xi+c,x)\,\hbox{ in } \big(-\infty, H_0(\xi+c)\big].$$ It follows from \cite[Remark~2.12]{ddl}, that the function $u_+(\omega,x)$ is Lipschitz continuous in $(-\infty,h_+(\omega)]$, and the Lipschitz constant depends only on $\omega$ and $\|\phi\|_{L^{\infty}(\R^2)}$ (and hence, it is independent of $c$ and $\xi$). By the definition of $U$, we know $Q_+[\phi](x+\xi+c,x)$ is Lipschitz continuous in $x\in\R$ with the same Lipschitz constant. In a similar way, one concludes that for any $n\in\N$,  $Q_+[a_n^c](x+\xi+c,x)$ is Lipschitz continuous in $x\in\R$ with Lipschitz constant depending only on $\omega$ and $\|a_n^c\|_{L^{\infty}(\R^2)}$. Since the sequence $\{a_n^c\}_{n\in\N}$ is uniformly bounded, it follows that the family
$$\Big\{ Q_+[a_n^c](x+\xi+c,x):\,\xi\in\R,\,c\in\R,\,n\in\N \Big\}$$ is uniformly Lipschitz continuous in $x\in\R$. Moreover, since $\phi(\xi,x)$ is uniformly continuous in $(\xi,x)\in\R^2$, we have $\phi(\xi+x,x)$ is uniformly continuous in $x\in\R$ uniformly in $\xi\in\R$. This implies the equicontinuity of the family 
$\big\{ a_n^c(\xi+x,x):\,\xi\in\R,\,c\in\R,\,n\in\N \big\}.$ 
The proof of Lemma~\ref{propan} is thereby complete.
\end{proof}

Next, for any fixed $n\in\N$ and $c\in\R$, we consider the limits of $a_n^c(\xi,x)$ as $\xi\to\pm\infty$. By Lemma \ref{maxmin}, for each fixed $n$ and $c$, $a_n^c(\xi,\cdot)\equiv 0$ for all large $\xi$. Hence $a_n^c(+\infty,x)=0$. The following lemma is concerned with 
$$\alpha_n(x)=\alpha_n^c(x):=a_n^c(-\infty,x), \, x\in\R.$$

\begin{lem}\label{infcau}
For each $n\in\N$, $\alpha_n(x)$ is $L$-periodic in $x$, nondecreasing in $n$, and $\alpha_n(x)$ converges to $p(0,x)$ as $n\to\infty$ uniformly in $x\in\R$. 
\end{lem}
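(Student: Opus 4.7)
I will prove the three assertions in order: $L$-periodicity, monotonicity in $n$, and uniform convergence to $p(0,x)$.

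The first two parts are immediate. By Lemma~\ref{propan}(ii), $a_n^c(\xi,\cdot)$ is $L$-periodic in $x$, and this property is preserved by the pointwise limit as $\xi\to-\infty$, so $\alpha_n(x+L)=\alpha_n(x)$ for every $x$. Similarly, since $a_n^c(\xi,x)$ is nondecreasing in $n$, so is its pointwise limit $\alpha_n(x)$.

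For the uniform convergence, the plan is to first derive the identity
\begin{equation}
\label{alpha-rec}
\alpha_n(x)=\max\bigl\{\phi(-\infty,x),\; U[\alpha_{n-1}](x)\bigr\},
\end{equation}
where $U$ on the right-hand side refers to the Cauchy-problem branch of $U$ (legitimate since $\alpha_{n-1}>0$ on all of $\R$). To obtain \eqref{alpha-rec}, I would pass to the limit $\xi\to-\infty$ in the defining recursion
\[
a_n^c(\xi,x)=\max\bigl\{\phi(\xi,x),\; U\bigl[a_{n-1}^c(\xi+c-x+\cdot,\cdot)\bigr](x)\bigr\}.
\]
The outer $\max$ and the $\phi(\xi,x)$ term present no difficulty; for the $U$-term I would invoke the continuity property (A3). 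Specifically, as $\xi\to-\infty$, the shifted function $z\mapsto a_{n-1}^c(\xi+c-x+z,z)$ converges pointwise (hence locally uniformly, thanks to the equicontinuity in Lemma~\ref{propan}(iv)) to $\alpha_{n-1}(z)$; its right supporting point $H_{n-1}^c(\xi+c-x)$ tends to $+\infty$, since $a_{n-1}^c\geq\phi$ combined with Lemma~\ref{maxmin} gives $H_{n-1}^c(\xi')\geq H_1-\xi'\to+\infty$ as $\xi'\to-\infty$; and its left supporting point is $-\infty$ for all sufficiently negative $\xi$. Hence (A3) delivers \eqref{alpha-rec}.

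With \eqref{alpha-rec} in hand, the monostable hypothesis (H) finishes the argument. The function $\alpha_0=\phi(-\infty,\cdot)$ is continuous in $x$, $L$-periodic, and strictly positive by property (e) in \eqref{M}, so $\inf_{x\in\R}\alpha_0(x)>0$. Applying hypothesis (H)(ii) with $v_0=\alpha_0$ and using the $\omega$-periodicity of $p$ gives
\[
U^n[\alpha_0](x)-p(0,x)=v(n\omega,x;\alpha_0)-p(n\omega,x)\to 0\quad\text{uniformly in }x\in\R
\]
as $n\to\infty$. Iterating \eqref{alpha-rec} and using the order-preservation of $U$ yields $\alpha_n\geq U[\alpha_{n-1}]\geq\cdots\geq U^n[\alpha_0]$, while Lemma~\ref{propan}(i) gives $\alpha_n(x)\leq p(0,x)$. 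A sandwich argument then shows $\alpha_n(x)\to p(0,x)$ uniformly in $x\in\R$.

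The main technical obstacle is the limit passage leading to \eqref{alpha-rec}: one must verify simultaneously the locally uniform convergence of the shifted data, the divergence of the right supporting point to $+\infty$, and that the limiting initial datum falls into the correct (Cauchy-problem) branch in the definition of $U$, so that (A3) applies cleanly. Once \eqref{alpha-rec} is secured, the remainder is a standard monotone iteration combined with the stability built into hypothesis (H).
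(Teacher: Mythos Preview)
Your proof is correct and ends at the same sandwich inequality as the paper, namely $U^n[\phi(-\infty,\cdot)](x)\leq \alpha_n(x)\leq p(0,x)$, from which uniform convergence follows by (A4) (equivalently, hypothesis (H)(ii)). The difference lies only in how the lower bound is reached. You first pass to the limit $\xi\to-\infty$ in the full recursion to obtain the identity $\alpha_n=\max\{\phi(-\infty,\cdot),\,U[\alpha_{n-1}]\}$, and then iterate to deduce $\alpha_n\geq U^n[\alpha_0]$. The paper instead establishes the inequality $Q_+^n[\phi](\xi+nc,x)\leq a_n^c(\xi,x)$ \emph{before} taking $\xi\to-\infty$, and only then uses (from Step~5 of Lemma~\ref{wellde}) that $Q_+^n[\phi](-\infty,x)=U^n[\phi(-\infty,\cdot)](x)$. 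This bypasses the need to verify the (A3) hypotheses (locally uniform convergence of the shifted data, divergence of the right supporting point) at the level of $a_{n-1}^c$; the limit is taken in the simpler iterate $Q_+^n[\phi]$, where the passage $\xi\to-\infty$ has already been analyzed. Your route gives slightly more information (an exact recursion for $\alpha_n$, not just a lower bound), at the cost of a more delicate limit justification; the paper's route is marginally slicker but yields only what is needed.
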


\begin{proof}
It is clear that $\alpha_n(x)$ is $L$-periodic in $x$ and nondecreasing in $n$, since $a_n^c(\xi,x)$ has these properties.
Next, we prove the convergence of $\alpha_n(x)$ as $n\to\infty$. Since the operator $Q_+$ is order preserving and is also translation invariant with respect to the variable $\xi$, one concludes by an induction argument that, for any $\phi\in\mathcal{M}$, 
\begin{equation}\label{auxicom}
Q_+^n[\phi](\xi+nc,x)\leq a^c_{n}(\xi,x) \leq p(0,x)\quad \hbox{for all }\,\xi\in\R,\,x\in\R,\,n\in\N.
\end{equation}
Now, for any fixed $n\in\N$, passing to the limit $\xi\to-\infty$ in the above inequality, we obtain  
$$Q_+^n[\phi](-\infty,x)\leq \alpha_n(x)\leq p(0,x)\,\hbox{ in } \R.$$ 
Furthermore, by the analysis in Step 5 of the proof of Lemma~\ref{wellde}, there holds
$$Q_+^n[\phi](-\infty,x)=U^n[\phi(-\infty,\cdot)](x) \, \hbox{ for each } n\in\N.$$ 
Since $\phi(-\infty,\cdot)$ is positive, $L$-periodic, it follows from the property (A4) that 
$$\lim_{n\to\infty}U^n[\phi(-\infty,\cdot)](x)=p(0,x) \, \hbox{ uniformly in } x\in\R,$$ whence $\alpha_n(x)$ converges to $p(0,x)$ as $n\to\infty$ uniformly in $x\in\R$. The proof of Lemma~\ref{infcau} is thereby complete.
\end{proof}

We now consider the limit function of $a_n^c(\xi,x)$ as $n\to \infty$.

\begin{lem}\label{infian}
The following statements are valid:
\begin{itemize}
\item[(i)] 
For each fixed $c\in\R$ and $\xi\in\R$, there exist a function $a^c(\xi+\cdot,\cdot)\in C(\R)$ and some $H^c(\xi)\in [H_0(\xi),+\infty]$ such that 
$\lim_{n\to\infty}H^c_n(\xi)= H^c(\xi)$ and 
\[\mbox{$\lim_{n\to\infty}a^c_n(\xi+x,x)=a^c(\xi+x,x)$ locally uniformly in $x\in\R$.}
\]

\item[(ii)]
For each fixed $c\in\R$ and $\xi\in\R$, if $H^c(\xi)<\infty$, then $a^c(\xi+x,x)=0$ if and only if $x\geq H^c(\xi)$, and if $H^c(\xi)=\infty$, then $a^c(\xi+x,x)>0$ for all $x\in\R$. Moreover, 
\begin{equation}\label{hxiinf}
H^c(\xi+kL)=H^c(\xi)-kL\, \hbox{ for all }\,k\in\Z.
\end{equation} 

\item[(iii)] The function $a^c(\xi,x)$ is nonincreasing in $c\in\R$ and $\xi\in\R$, $L$-periodic in $x\in\R$, and $a^c(-\infty,x)\equiv p(0,x)$. Moreover, for any fixed $c\in\R$ and $\xi\in\R$, 
\begin{equation}\label{recurlima}
a^c(\xi+x,x)=\max\Big\{\phi(\xi+x,x), \, U[a^c(\cdot+\xi+c,\cdot)](x) \Big\}\,\hbox{ for all } x\in\R. 
\end{equation}
\end{itemize}
\end{lem}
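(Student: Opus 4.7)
The plan is to obtain every conclusion by passing to the limit $n\to\infty$ in the corresponding property of $a_n^c$, using monotonicity (Lemma \ref{propan}(ii)) to get existence of pointwise limits, equicontinuity (Lemma \ref{propan}(iv)) to upgrade them to locally uniform convergence, and the continuity property (A3) of $U$ to pass to the limit inside the recursion \eqref{recuran}.

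For part (i), Lemma \ref{propan}(i)--(ii) says that $a_n^c(\xi+x,x)$ is nondecreasing in $n$ and bounded above by $p(0,x)$, so it converges pointwise to a function which I denote by $a^c(\xi+x,x)$. The equicontinuity in $x$ provided by Lemma \ref{propan}(iv) upgrades this to locally uniform convergence and places the limit in $C(\R)$. The monotonicity of $H_n^c(\xi)$ in $n$ yields the existence of $H^c(\xi)\in[H_0(\xi),+\infty]$.

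For part (ii), when $H^c(\xi)<\infty$: if $x\geq H^c(\xi)\geq H_n^c(\xi)$ then $a_n^c(\xi+x,x)=0$ for every $n$, so the limit vanishes; if $x<H^c(\xi)$, there exists $n_0$ with $x<H_n^c(\xi)$ for $n\geq n_0$, whence $a_n^c(\xi+x,x)>0$ and by monotonicity $a^c(\xi+x,x)>0$. The case $H^c(\xi)=+\infty$ is handled in the same way. For the shift identity, using the $L$-periodicity of $a_n^c$ in its second variable,
\[
a_n^c\big((\xi+kL)+x,\,x\big)=a_n^c\big(\xi+(x+kL),\,x+kL\big),
\]
which vanishes precisely when $x+kL\geq H_n^c(\xi)$, i.e., when $x\geq H_n^c(\xi)-kL$; comparing with the definition of $H_n^c(\xi+kL)$ yields $H_n^c(\xi+kL)=H_n^c(\xi)-kL$, and \eqref{hxiinf} follows by letting $n\to\infty$.

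For part (iii), monotonicity in $c,\xi$ and $L$-periodicity in $x$ of $a^c$ are inherited from the corresponding properties of $a_n^c$ in Lemma \ref{propan}(ii). The identity $a^c(-\infty,x)=p(0,x)$ follows by combining $a^c(\xi,x)\geq a_n^c(\xi,x)$, the limit $a_n^c(-\infty,x)=\alpha_n(x)$ from Lemma \ref{infcau}, and $\alpha_n\to p(0,\cdot)$ uniformly. For the recursion \eqref{recurlima}, I rewrite \eqref{recuran} via the definition \eqref{defirq} of $Q_+$ as
\[
a_n^c(\xi+x,x)=\max\Big\{\phi(\xi+x,x),\,U\big[a_{n-1}^c((\xi+c)+\cdot,\cdot)\big](x)\Big\},
\]
and then pass to the limit in $n$. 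Part (i) gives $a_{n-1}^c((\xi+c)+\cdot,\cdot)\to a^c((\xi+c)+\cdot,\cdot)$ locally uniformly, the right supporting points $H_{n-1}^c(\xi+c)$ converge to $H^c(\xi+c)\in(-\infty,+\infty]$, and the left supporting points are $-\infty$; so (A3) applies and yields \eqref{recurlima}.

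The main obstacle is this last step, namely verifying that the continuity (A3) really applies uniformly in the two regimes. When $H^c(\xi+c)<\infty$ the limit function still belongs to $\mathcal{C}$ with finite right supporting point and $U$ is given by the free boundary problem \eqref{eqfunbd}; when $H^c(\xi+c)=+\infty$, the limit $a^c((\xi+c)+\cdot,\cdot)$ is strictly positive on all of $\R$ and is handled by the Cauchy-problem branch of $U$ from \eqref{cauchy}, so one must invoke the continuous dependence results from Part 1 to bridge the two regimes. Once this is in place the remaining verifications are routine consequences of monotone convergence and the equicontinuity in Lemma \ref{propan}(iv).
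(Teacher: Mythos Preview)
Your proposal is correct and mirrors the paper's proof essentially step for step: both use the monotonicity from Lemma~\ref{propan}(ii) for pointwise limits, the equicontinuity from Lemma~\ref{propan}(iv) (the paper phrases this via Arzel\`a--Ascoli) for locally uniform convergence, the $L$-periodicity in the second variable to obtain \eqref{hxiinf} at the level of $H_n^c$, Lemma~\ref{infcau} for $a^c(-\infty,x)\equiv p(0,x)$, and (A3) to pass to the limit in the recursion. The ``main obstacle'' you flag is not a genuine one here: (A3) is stated precisely to allow $h\in(-\infty,+\infty]$, and the definition of $U$ on $\mathcal{C}$ already covers the Cauchy-problem branch, so the passage to the limit goes through directly in both regimes without further work.
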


\begin{proof}
(i) For any fixed $c\in\R$ and $\xi\in\R$, due to the monotonicity properties stated in Lemma~\ref{propan} (ii),  we may define 
$$H^c(\xi):=\lim_{n\to\infty}H^c_n(\xi) \in [H_0(\xi),+\infty],$$ 
$$a^c(\xi,x):=\lim_{n\to\infty}a^c_n(\xi,x),\;\; \,(\xi, x)\in\R^2.$$
 Furthermore, by Lemma~\ref{propan} (iv) and the Arzel\`{a}-Ascoli Theorem, 
 $$a^c_n(\xi+x,x)\to a^c(\xi+\cdot,\cdot) \hbox{ as } n\to\infty  \hbox{ locally uniformly in } x\in\R.$$ 
 This in particular implies that $a^c(\xi+\cdot,\cdot)\in C(\R)$. We have thus proved all the conclusions in (i). 

(ii) Fix $c\in\R$ and $\xi\in\R$. We first consider the case where $H^c(\xi)<\infty$. For any given $x<H^c(\xi)$, since $H_n^c(\xi)$ converges to $H^c(\xi)$ as $n\to\infty$, there exists some $n_0$ such that $x<H_{n}^c(\xi)$ for all $n\geq n_0$. Then by Lemma~\ref{propan} (i), we have $a_n^c(\xi+x,x)>0$ for all $n\geq n_0$. This together with the fact that $a_n^c(\xi+x,x)$ is nondecreasing in $n$ implies that $a^c(\xi+x,x)>0$. 

On the other hand, for any given $x\geq H^c(\xi)$, since $H_n^c(\xi)$ is nondecreasing in $n$, there holds $x\geq H_n^c(\xi)$ for all $n\in\N$, whence $a_n^c(\xi+x,x)=0$ by Lemma~\ref{propan} (i) again. Therefore,  $a^c(\xi+x,x)=0$. Similarly, one concludes that if $H^c(\xi)=\infty$, then $a^c(\xi+x,x)>0$ for all $x\in\R$. 

We now show the equality \eqref{hxiinf}. It suffices to prove that  
$$ H^c_n(\xi+kL)=H^c_n(\xi)-kL\, \hbox{ for all }\,k\in\Z,\,n\in\N.$$
By the definition of $H^c_n(\xi)$,  
$a_n^c(\xi+x,x)= 0$ if and only if $x\geq H^c_n(\xi)$. Since $a_n^c$ is $L$-periodic in its second variable, it follows that 
$$a_n^c(\xi+kL+x,x)=a_n^c(\xi+kL+x,kL+x),$$
and hence,  $a_n^c(\xi+kL+x,x)=0$ if and only if $x+kL\geq H^c_n(\xi)$. Thus,  $H^c_n(\xi+kL)=H^c_n(\xi)-kL$ and \eqref{hxiinf} is proved.

(iii) Since for each fixed $n\in\N$, $a^c_n(\xi,x)$ is nonincreasing in $\xi\in\R$ and $c\in\R$, $L$-periodic in $x$, its limit $a^c(\xi,x)$  also possesses these properties.
This in particular implies that the limits $a^c(\pm \infty,x)$ exist. Furthermore, letting $n\to\infty$ followed by sending $\xi\to-\infty$ in the first inequality of Lemma~\ref{propan} (i), we obtain 
$$0\leq a^c(-\infty,x)\leq p(0,x)\,\hbox{ for } x\in\R.$$ 
On the other hand, since 
$$a^c_n(-\infty,x)\leq a^c(-\infty,x) \,\hbox{ for all }n\in\N, \,x\in\R,$$ 
and since $a^c_n(-\infty,x)$ converges to $p(0,x)$ uniformly in $x\in\R$ as $n\to\infty$ by Lemma~\ref{infcau}, it follows that $a^c(-\infty,x)\equiv p(0,x)$. 
Finally, for any given $c\in\R$ and $\xi\in\R$,  by \eqref{defirq} and \eqref{recuran}, we have
\begin{equation*}
a_{n+1}^c(\xi+x,x)=\max\Big\{\phi(\xi+x,x), \, U[a_n^c(\cdot+\xi+c,\cdot)](x) \Big\}\,\hbox{ for all } x\in\R,\,n\in\N. 
\end{equation*}
Since $a^c_n(\xi+x,x)$ converges to $a^c(\xi+x,x)$ locally uniformly in $x\in\R$ and $H^c_n(\xi)$ converges to $H^c(\xi)$ as $n\to\infty$, it follows from the property (A3) that  
$$U[a_n^c(\cdot+\xi+c,\cdot)](x)\to U[a^c(\cdot+\xi+c,\cdot)](x)\,\hbox{ locally uniformly in }\,x\in\R\,\hbox{ as }\,n\to\infty.$$ 
Then taking the limit $n\to\infty$ in the above equality, we arrive at \eqref{recurlima}. 
The proof of Lemma~\ref{infian} is thereby complete.
\end{proof}

By Lemma~\ref{propan} (iv) and Lemma~\ref{infian} (iii), it is easily seen that the limit $a^c(\infty,x)$ exists and it is continuous in $x\in\R$. The following two lemmas supply some crucial  properties of $a^c(\infty,x)$, which are the key to obtain the spreading speed.

\begin{lem}\label{acinfty}
Either $a^c(\infty,x)\equiv 0$ or $a^c(\infty,x)\equiv p(0,x)$.
\end{lem}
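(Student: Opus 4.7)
The plan is to set $\psi(x):=a^c(\infty,x)$, identify $\psi$ as a fixed point of the $\omega$-shift map of the Cauchy problem \eqref{cauchy}, and then invoke the uniqueness in hypothesis (H)(i). The limit $\psi(x)=\lim_{\xi\to+\infty}a^c(\xi,x)$ exists pointwise by the monotonicity in $\xi$ from Lemma \ref{infian}(iii), and Lemmas \ref{propan}(iv) and \ref{infian}(iii) show that $\psi\in C(\R)$ is $L$-periodic with $0\le\psi\le p(0,\cdot)$. As a preliminary observation, $H^c$ is nonincreasing in $\xi$: this is inherited from the nonincreasing property of $a_n^c(\cdot,x)$ in its first argument, which forces $H_n^c$ to be nonincreasing in $\xi$, and then passes to the limit $n\to\infty$. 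Combining this monotonicity with $H^c(\xi+L)=H^c(\xi)-L$ from Lemma \ref{infian}(ii) leaves only two possibilities: $H^c\equiv+\infty$ throughout $\R$, or $H^c(\xi)<+\infty$ for every $\xi$ with $H^c(\xi)\to-\infty$ as $\xi\to+\infty$. In the second case, reparametrising the condition in Lemma \ref{infian}(ii) as $a^c(\eta,x)=0\iff x\ge H^c(\eta-x)$ shows that for each fixed $x$ one has $a^c(\eta,x)=0$ once $\eta$ is large enough, whence $\psi\equiv 0$, giving the first alternative of the lemma.

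Now suppose $H^c\equiv+\infty$. Then $\varphi_\xi(y):=a^c(y+\xi+c,y)>0$ for every $y,\xi\in\R$, and $\varphi_\xi(y)\to\psi(y)$ pointwise as $\xi\to+\infty$; by the equicontinuity in Lemma \ref{propan}(iv) the convergence is locally uniform in $y$. Since $\phi\in\mathcal{M}$, Lemma \ref{maxmin} ensures $\phi(\xi+x,x)=0$ once $\xi$ is sufficiently large, so the recursion \eqref{recurlima} simplifies to $a^c(\xi+x,x)=U[\varphi_\xi](x)$ for large $\xi$. Because $\varphi_\xi$ has full support in this regime, $U[\varphi_\xi](x)=v(\omega,x;\varphi_\xi)$ is the Poincar\'e map of the Cauchy problem \eqref{cauchy}. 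Standard parabolic continuity for \eqref{cauchy}, applied to the uniformly bounded family $\{\varphi_\xi\}$ converging locally uniformly to $\psi$, then yields $v(\omega,x;\varphi_\xi)\to v(\omega,x;\psi)$ locally uniformly in $x$. Passing to the limit $\xi\to+\infty$ gives
\[
\psi(x)=v(\omega,x;\psi)\quad \text{for every } x\in\R.
\]

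If $\psi\equiv 0$ we have the first alternative; otherwise the strong maximum principle applied to the Cauchy problem with nontrivial nonnegative initial datum $\psi$ gives $v(\omega,x;\psi)>0$ for all $x$, so $\psi>0$ everywhere. Extending $\tilde p(t,x):=v(t,x;\psi)$ to $t\in\R$ by $\omega$-periodicity --- which is consistent because $v(\omega,\cdot;\psi)=\psi=v(0,\cdot;\psi)$ --- produces a positive classical solution of the parabolic equation in \eqref{psteady11} that is $\omega$-periodic in $t$ and $L$-periodic in $x$. By the uniqueness in hypothesis (H)(i), $\tilde p\equiv p$, whence $\psi(x)=p(0,x)$, the second alternative. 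The main technical obstacle is the passage to the limit inside $U$ in \eqref{recurlima}: $\psi$ need not belong to $\mathcal{C}$ a priori, and $U$ is defined in a case-by-case manner on $\mathcal{C}$. The $H^c$-dichotomy resolves this by forcing $\psi\equiv 0$ whenever the $\varphi_\xi$ fail to retain full support, so only the Cauchy setting survives, where classical parabolic continuity applies.
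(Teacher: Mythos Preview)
Your proof is correct and follows essentially the same route as the paper: both arguments split on whether $H^c$ is finite somewhere or identically $+\infty$, dispose of the finite case via the relation $H^c(\xi+L)=H^c(\xi)-L$ and $L$-periodicity, and in the infinite case pass to the limit in the recursion \eqref{recurlima} to recognise $a^c(\infty,\cdot)$ as a fixed point. The only cosmetic difference is that the paper invokes the pre-packaged monostability property (A4) of $U$ at the end, whereas you unpack it by hand---noting that on full-support data $U$ coincides with the Cauchy Poincar\'e map and then appealing directly to the uniqueness in hypothesis (H)(i).
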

\begin{proof}
Fix $c\in\R$. If there exists $\xi_0\in\R$ such that $H^c(\xi_0)<\infty$, then it follows from Lemma~\ref{infian} (ii) that $a^c(x+\xi_0,x)=0$ if $x\geq H^c(\xi_0)$. This implies that, for any $x_0\in\R$, there exists $k_0\in\N$ large enough such that 
$$a^c(\xi_0+x_0+kL,x_0+kL)=0 \,\hbox{ for all } k\geq k_0.$$ 
Since $a^c$ is $L$-periodic in its second variable, it follows that 
$$a^c(\xi_0+x_0+kL,x_0)=0 \,\hbox{ for all } k\geq k_0.$$ 
Sending $k\to\infty$ yields $a^c(\infty,x_0)=0$. Since $x_0$ is arbitrary, it follows that $a^c(\infty,\cdot)\equiv 0$. 

Otherwise  $H^c(\xi)=\infty$ for every $\xi\in\R$.  Letting $\xi\to+\infty$ in \eqref{recurlima}, by the continuity property (A3), we obtain 
\[
a^c(\infty,x)=\max\big\{\phi(+\infty, x), U[a^c(\infty,\cdot) ](x)\big\}= U[a^c(\infty,\cdot) ](x)
\mbox{
for $x\in\R$,}
\]
 since $\phi(+\infty, x)=0$. Thus $a^c(\infty,x)$ is an equilibrium of the operator $U$ in $\mathcal{C}$. Due to the property (A4), it follows that either $a^c(\infty,x)\equiv p(0,x)$ or  $a^c(\infty,x)\equiv 0$. 

Consequently, either $a^c(\infty,x)\equiv p(0,x)$ and $H^c(\xi)\equiv\infty$, or  $a^c(\infty,x)\equiv 0$. 
\end{proof}

\begin{lem}\label{comkey}
Let $H_2$ be a real number such that $\phi(\xi,\cdot)\equiv 0$ for $\xi\geq H_2$ $($whose existence is given in Lemma~\ref{maxmin}$)$. Then
$a^c(\infty,x)\equiv p(0,x)$ if and only if there is some $n_0\in\N$ such that 
\begin{equation*}
a^c_{n_0}(H_2,x)>\phi(-\infty,x)\,\,\hbox{ for all }\, x\in\R.
\end{equation*}

\end{lem}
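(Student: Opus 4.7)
The equivalence has two directions with quite different flavors, and I would treat them separately.

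For the ($\Rightarrow$) direction, I would exploit that $a^c$ is nonincreasing in $\xi$ and bounded above by $p(0,x)$: the identity $a^c(\infty,x)\equiv p(0,x)$ forces $a^c(\xi,x)\equiv p(0,x)$ for every $(\xi,x)$, so in particular $a^c_n(H_2,x)\nearrow p(0,x)$ pointwise as $n\to\infty$. The family $\{a^c_n(H_2,\cdot)\}_n$ is equicontinuous in $x$ by Lemma \ref{propan}(iv) and $L$-periodic in $x$ by Lemma \ref{propan}(ii), so Dini's theorem applied on one period $[0,L]$ upgrades the monotone pointwise convergence to uniform convergence in $x\in\R$. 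Since $p(0,\cdot)-\phi(-\infty,\cdot)$ is continuous, $L$-periodic, and pointwise positive (property (e) in the definition of $\mathcal{M}$), its minimum $\delta>0$ is strictly positive, so picking $n_0$ large enough that $\|a^c_{n_0}(H_2,\cdot)-p(0,\cdot)\|_\infty<\delta$ yields $a^c_{n_0}(H_2,x)>\phi(-\infty,x)$ for every $x\in\R$.

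For the ($\Leftarrow$) direction, I would first combine the hypothesis $a^c_{n_0}(H_2,x)>\phi(-\infty,x)$ with the bound $a^c\geq a^c_{n_0}$, the monotonicity of $a^c$ in $\xi$, and the fact that $\phi(\xi,\cdot)\equiv 0$ for $\xi\geq H_2$ to conclude that $a^c(\xi,x)>\phi(\xi,x)$ for \emph{every} $(\xi,x)$ with $\xi\leq H_2$. This strict inequality forces the maximum in the fixed-point equation of Lemma \ref{infian}(iii) to be realized by the $Q_+$-term throughout, which produces the global identity
\[
a^c(\eta,z)=U\bigl[a^c(\eta+c-z+\cdot,\cdot)\bigr](z)\quad \text{for all } (\eta,z)\in\R^2,
\]
and, by induction,
\[
a^c(\eta,z)=U^k\bigl[a^c(\eta+kc-z+\cdot,\cdot)\bigr](z)\quad \text{for all } k\in\N.
\]
I would then argue by contradiction, assuming $a^c(\infty,x)\equiv 0$. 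By Lemma \ref{acinfty} together with the periodicity $H^c(\xi+L)=H^c(\xi)-L$ from \eqref{hxiinf}, the threshold $H^c$ is then finite at every $\xi$ and $H^c(\xi)\to-\infty$ as $\xi\to+\infty$. The slanted profile $\chi_k(y):=a^c(\eta+kc-z+y,y)$ then has right support $R_k:=H^c(\eta+kc-z)\to-\infty$ at rate $-c$ (treating $c>0$; the case $c\leq 0$ is easier since then $\eta+kc$ moves left and the slanted profile stays essentially in the favorable regime), and it remains bounded below by $m:=\min_x\phi(-\infty,x)>0$ on the half-line $y\leq H_2-\eta-kc+z$, with the gap between these two thresholds uniformly bounded by $L$-periodicity. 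A comparison with the iterated action of $U$ on a Cauchy-type initial datum bounded below by $m$, using property (A4) and Proposition \ref{opcompare}, then shows that $U^k[\chi_k](z)$ must converge along $k$ either to $0$ or to $p(0,z)$. The first alternative gives $a^c(\eta,z)\equiv 0$, contradicting $a^c(-\infty,z)=p(0,z)>0$; the second gives $a^c(\eta,z)\equiv p(0,z)$, contradicting the standing hypothesis $a^c(\infty,z)\equiv 0$.

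The main obstacle is this final contradiction step. It amounts to tracking the competition between the recession rate $c$ of the nontrivial support of $\chi_k$ and the intrinsic monostable spreading of $U$, and it is the technically most delicate part of the proof: it essentially performs, in place, a spreading-dichotomy analysis along the varying family $\{\chi_k\}_k$, and uses in an essential way the monostable stability (A4), the equicontinuity in Lemma \ref{propan}(iv), and the $L\mathbb{Z}$-equivariance (A2), combined with the comparison principle of Proposition \ref{opcompare}.
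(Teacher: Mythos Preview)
Your forward direction $(\Rightarrow)$ is essentially the same as the paper's, with Dini's theorem on a period being a harmless variant of the paper's ``locally uniform convergence plus periodicity'' argument.

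The reverse direction $(\Leftarrow)$, however, has a genuine gap, and your approach is very different from the paper's. You correctly derive the clean fixed-point relation $a^c(\eta,z)=U^k[\chi_k](z)$ for all $k$, but the final step---your claimed dichotomy that $U^k[\chi_k](z)$ must tend to either $0$ or $p(0,z)$---is not established. The quantity $U^k[\chi_k](z)$ is in fact \emph{constant} in $k$ (it equals $a^c(\eta,z)$), so ``converging along $k$'' is vacuous; the real question is what constraints the identity imposes, and your appeal to (A4) does not apply because $\chi_k$ is never bounded below by a positive constant on all of $\R$. What you call ``tracking the competition between the recession rate $c$ and the intrinsic monostable spreading of $U$'' is precisely the spreading-speed analysis that Lemma~\ref{comkey} is later used to build (via Lemma~\ref{proc} and Proposition~\ref{r-speed}), so invoking it here risks circularity. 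Even your ``easier'' case $c\le 0$ still requires a spreading result for half-line data, not merely (A4).

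The paper's argument avoids all of this by staying at the sequence level rather than passing to the limit $a^c$. From $a^c_{n_0}(H_2,x)>\phi(-\infty,x)$ and uniform continuity one finds $\delta>0$ with $a^c_{n_0}(H_2+\delta,x)>\phi(-\infty,x)$, whence $a^c_{n_0}(\xi+\delta,x)\ge\phi(\xi,x)=a^c_0(\xi,x)$ for all $(\xi,x)$. A straightforward induction then gives $a^c_{n_0+k}(\xi+\delta,x)\ge a^c_k(\xi,x)$, and letting $k\to\infty$ yields $a^c(\xi+\delta,x)\ge a^c(\xi,x)$. Combined with monotonicity of $a^c$ in $\xi$ this forces $a^c$ to be independent of $\xi$, and since $a^c(-\infty,x)=p(0,x)$ the conclusion follows. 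This shift-and-induction trick is short, self-contained, and does not require any spreading information; you should replace your contradiction argument with it.
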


\begin{proof}
If $a^c(\infty,x)=p(0,x)$, then by the monotonicity of $a^c(\xi,x)$ in $\xi$ and $a^c(\xi,x)\leq p(0,x)$, we have $\phi(\xi,x)\equiv p(0,x)$ and hence 
$$a^c(\xi+x,x)=p(0,x)\, \hbox{ for all } \xi\in\R,\, x\in\R.$$ 
By choosing $\xi=H_2$, it follows from Lemma~\ref{infian} (i) that $a_n^c(H_2+x,x)$ converges to $p(0,x)$ locally uniformly in $x\in\R$ as $n\to\infty$. This together with the assumption that $p(0,x)>\phi(-\infty, x)$ for all $x\in\R$, implies that there is some $n_0\in\R$ such that 
$$ a^c_{n_0}(H_2+x,x)>\phi(-\infty,x)\,\,\hbox{ for all }\, x\in [0,L]. $$
Since $a^c_{n_0}(\xi,x)$ is nonincreasing in $\xi$, we obtain $a^c_{n_0}(H_2,x)>\phi(-\infty,x)$ for all $x\in [0,L]$. Furthermore, since both $a^c_{n_0}(H_2,x)$ and  $\phi(-\infty,x)$ are $L$-periodic in $x$, it follows that  $a^c_{n_0}(H_2,x)>\phi(-\infty,x)$ for all $x\in\R$.

Conversely, suppose that there is some $n_0\in\N$ such that $a^c_{n_0}(H_2,x)>\phi(-\infty,x)$ for all $x\in\R$. Since both $a_{n_0}^c(\xi,x)$  and $\phi(-\infty,x)$ are $L$-periodic in $x$, and $a_{n_0}^c(\xi,x)$ is uniformly continuous in $(\xi,x)\in\R^2$,  there exists a positive constant $\delta>0$ such that 
$$a_{n_0}^c(H_2+\delta,x)>\phi(-\infty,x)\, \hbox{ for all } x\in\R.$$ 
Furthermore, since $a_{n_0}^c(\xi,x)$ and $\phi(\xi,x)$ are nonincreasing in $\xi$ and since $\phi(\xi,x)=0$ for all $\xi\geq H_2$, it follows that
\[
\mbox{ $a_{n_0}^c(\delta+\xi,x)\geq \phi(\xi,x)$ for all $(\xi,x)\in\R^2$.}
\]

Now we claim that 
\begin{equation}\label{induac}
a_{n_0+k}^c(\delta+\xi,x)\geq a_k^c(\xi,x)\quad \hbox{for all }\, (\xi, x)\in\R^2,\,k\in\N.
\end{equation}
We already know that the above inequality holds for $k=0$. Suppose that \eqref{induac} holds for some integer $k=k_0\geq 0$. Then 
\begin{equation*}
\begin{split}
a_{n_0+k_0+1}^c(\xi+\delta,x)&=\max\Big\{\phi(\xi+\delta,x), \,Q_+[a_{n_0+k_0}^c](\xi+\delta+c,x)\Big\}\\
&\geq Q_+[a_{n_0+k_0}^c](\xi+\delta+c,x)\\
& \geq Q_+[a_{n_0}^c](\xi+c,x) \;\mbox{ for all $(\xi,x)\in\R^2$.}
\end{split}
\end{equation*}
We also have
\[
a_{n_0+k_0+1}^c(\xi+\delta,x)\geq a_{n_0}^c(\xi+\delta,x)\geq \phi(\xi,x) \mbox{ for } (\xi,x)\in\R^2.
\]
Therefore, for $(\xi,x)\in\R^2$,
\[
a_{n_0+k_0+1}^c(\xi+\delta,x)\geq\max\Big\{\phi(\xi,x),\; Q_+[a_{n_0}^c](\xi+c,x)\Big\}=a_{n_0+1}^c(\xi,x).
\]
This implies that \eqref{induac} holds for all $k\in\N$. 

Passing to the limit  $k\to\infty$ in \eqref{induac} gives 
$$a^c(\xi+\delta,x)\geq a^c(\xi,x)\, \hbox{ for all } (\xi,x)\in\R^2.$$ 
This together with the fact that $a^c(\xi,x)$ is nonincreasing in $\xi$ implies that $a^c(\xi,x)$ is independent of $\xi$. Furthermore, since $a^c(-\infty,x)\equiv p(0,x)$ by Lemma~\ref{infian} (iii), it follows that $a^c(\infty,x)\equiv p(0,x)$. The proof of Lemma~\ref{comkey} is thereby complete. 
\end{proof}

Define
\begin{equation}\label{defic}
c_+=\sup\Big\{c\in\R:\, a^c(\infty,x)\equiv p(0,x)\Big\}.
\end{equation}
If there does not exist $c\in\R$ such that $a^c(\infty,x)\equiv p(0,x)$, then we define $c_+=-\infty$.

\begin{lem}\label{proc}
$c_+>-\infty$, and $a^c(\infty,x)\equiv p(0,x)$ if $c< c_+$, $a^c(\infty,x)\equiv 0$ if $c\geq  c_+$.
\end{lem}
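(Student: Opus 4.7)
The plan is to break the lemma into three assertions and handle them in order: $(1)$ $c_+>-\infty$; $(2)$ $a^c(\infty,\cdot)\equiv p(0,\cdot)$ for $c<c_+$; $(3)$ $a^c(\infty,\cdot)\equiv 0$ for $c\geq c_+$. Throughout, the dichotomy in Lemma~\ref{acinfty} reduces each case to deciding which of the two alternatives $0$ or $p(0,\cdot)$ occurs, and the finite-$n$ certificate in Lemma~\ref{comkey} will be the main workhorse.

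To establish $c_+>-\infty$ I would exhibit a very negative $c$ with $a^c(\infty,\cdot)\equiv p(0,\cdot)$. Setting $\xi=H_2$ in \eqref{auxicom} gives $a_n^c(H_2,x)\geq Q_+^n[\phi](H_2+nc,x)$. From the computation in Step~5 of the proof of Lemma~\ref{wellde} one has $Q_+^n[\phi](-\infty,x)=U^n[\phi(-\infty,\cdot)](x)$, and property (A4) applied to the bounded, positive, $L$-periodic function $\phi(-\infty,\cdot)$ yields $U^n[\phi(-\infty,\cdot)]\to p(0,\cdot)$ uniformly on $\R$. Since $p(0,x)>\phi(-\infty,x)$ by (e) in \eqref{M}, I can fix $n_0$ with a uniform gap $U^{n_0}[\phi(-\infty,\cdot)](x)>\phi(-\infty,x)+2\eta$ for some $\eta>0$. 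Pushing $c$ to be negative enough that $H_2+n_0c$ sits deep in the left tail and invoking continuity, $Q_+^{n_0}[\phi](H_2+n_0c,x)>\phi(-\infty,x)$ uniformly in $x$, so Lemma~\ref{comkey} gives $a^c(\infty,\cdot)\equiv p(0,\cdot)$ and therefore $c_+\geq c>-\infty$.

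For the range $c<c_+$ I would exploit the monotonicity in $c$ inherited from Lemma~\ref{propan}(ii): each $a_n^c(\xi,x)$ is nonincreasing in $c$, hence so are the pointwise limits $a^c(\xi,x)$ and $a^c(\infty,x)$. By the supremum definition of $c_+$ there exists $c''\in(c,c_+]$ with $a^{c''}(\infty,\cdot)\equiv p(0,\cdot)$, and combining the monotonicity $a^c(\infty,\cdot)\geq a^{c''}(\infty,\cdot)$ with the universal ceiling $a^c(\infty,\cdot)\leq p(0,\cdot)$ forces the same identity at $c$.

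The strict-supercritical range $c>c_+$ is immediate from Lemma~\ref{acinfty} and the supremum definition of $c_+$. The delicate case, and the one I expect to be the main obstacle, is the endpoint $c=c_+$, which I would handle by contradiction. Assuming $a^{c_+}(\infty,\cdot)\equiv p(0,\cdot)$, Lemma~\ref{comkey} supplies some $n_0\in\N$ with $a_{n_0}^{c_+}(H_2,x)>\phi(-\infty,x)$ for all $x\in\R$; since both sides are continuous and $L$-periodic, this gap is in fact uniformly positive. The uniform continuity of $a_{n_0}^c(\xi,x)$ in $(c,\xi,x)\in\R^3$ from Lemma~\ref{propan}(iii) then preserves the strict inequality on some interval $[c_+,c_++\delta]$, and Lemma~\ref{comkey} again converts it into $a^c(\infty,\cdot)\equiv p(0,\cdot)$ on that interval, contradicting the supremum property of $c_+$. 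The indispensable ingredient throughout is Lemma~\ref{comkey}, which translates the asymptotic identity $a^c(\infty,\cdot)\equiv p(0,\cdot)$ into a finite-$n$ strict inequality that varies continuously in $c$, enabling both the deep-left-tail construction in Step~1 and the openness argument at the threshold in Step~3.
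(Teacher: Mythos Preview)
Your proposal is correct and follows essentially the same approach as the paper's own proof: the three-part decomposition, the use of \eqref{auxicom} together with $Q_+^{n_0}[\phi](-\infty,\cdot)=U^{n_0}[\phi(-\infty,\cdot)]$ and (A4) to establish $c_+>-\infty$, the monotonicity in $c$ for the subcritical range, and the continuity-in-$c$ argument at the endpoint via Lemma~\ref{comkey} all match the paper's reasoning. The only cosmetic difference is that you explicitly invoke Lemma~\ref{propan}(iii) and $L$-periodicity to justify the uniform gap and the stability of the strict inequality under perturbation of $c$, whereas the paper states these a bit more tersely.
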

\begin{proof}
We first prove that $c_+>-\infty$. It follows from the property (A4) that $U^n[\phi(-\infty,\cdot)](x)$ converges to $p(0,x)$ as $n\to\infty$ uniformly in $x\in\R$, and hence, for any small positive constant $\epsilon$, there exists some $n_0\in\N$ large enough such that 
$$U^{n_0}[\phi(-\infty,\cdot)](x)-\epsilon>\phi(-\infty,x)\,\hbox{ for all }\,x\in\R.$$
On the other hand, by \eqref{auxicom}, we have
$$a^c_{n_0}(H_2,x) \geq Q_+^{n_0}[\phi](H_2+n_0c,x)  \, \hbox{ for all }\,c\in\R,\,x\in\R, $$
with $H_2$ given in Lemma~\ref{maxmin} such that $\phi(\xi,\cdot)\equiv 0$ 
for $\xi\geq H_2$. Furthermore, since 
$Q_+^{n_0}[\phi](-\infty,x)\equiv U^{n_0}[\phi(-\infty,\cdot)](x)$ (whose proof is the same as that in Step 5 of the proof of Lemma~\ref{wellde}), there exists $c_0<0$ large negative such that  
$$Q_+^{n_0}[\phi](H_2+n_0c_0,x) \geq U^{n_0}[\phi(-\infty,\cdot)](x) -\epsilon \,\hbox{ for all }\,x\in\R. $$
Here we have implicitly used the fact that these two functions of $x$ are $L$-periodic.
Combining the above inequalities, we obtain  
$$a^{c_0}_{n_0}(H_2,x) > \phi(-\infty,x)\, \hbox{ for all }x\in\R. $$
It then follows from Lemma~\ref{comkey} that $a^{c_0}(\infty,x)\equiv p(0,x)$, and hence, $c_+\geq c_0>-\infty$.

Next, since $a^c(\infty,x)$ is nonincreasing in $c$, it follows readily from Lemma~\ref{acinfty} that $a^c(\infty,x)\equiv p(0,x)$ if $c<c_+$. By the definition of $c_+$, we clearly have $a^c(\infty,x)\equiv 0$ for $c>c_+$ when $c_+<+\infty$. 
(We will prove later that $c_+<+\infty$ always holds; see Proposition \ref{r-speed-finite} below.)

To complete the proof, it remains to show that $a^{c_+}(\infty,x)\equiv 0$ when $c_+<+\infty$. Assume by contradiction that $a^{c_+}(\infty,x)\equiv p(0,x)$. Then by Lemma~\ref{comkey} again, there exists some $n_1\in\N$ such that 
$$a^{c_+}_{n_1}(H_2,x)>\phi(-\infty,x)\,\hbox{ for all } \,x\in\R.$$ 
By the continuity of the function 
$a^{c}_{n_1}(H_2,x)$ with respect to $c$, and the fact that it is periodic in $x$, it follows that $a^{c}_{n_1}(H_2,x)>\phi(-\infty,x)$ for $c$ in a neighbourhood of $c_+$, whence $a^{c}(\infty,x)=p(0,x)$, which is in contradiction to the definition of $c_+$. The proof of Lemma~\ref{proc} is now complete.
\end{proof}

The following lemma shows that $c_+$ is independent of the choice of the function $\phi$.

\begin{lem}\label{independc}
Let $\big\{\big(\tilde{a}_n^c(\xi,x), \tilde{H}_n^c(\xi)\big)\big\}_{n\in\N}$ be the sequence obtained from the recursions \eqref{recuran} and \eqref{recurhn} when $\phi$ is replaced by another function $\tilde{\phi}\in\mathcal{M}$ with $\tilde{H}_0:=\tilde{H}_0(\xi)$ such that $\tilde{\phi}(\xi+x,x)=0$ if and only if $x\geq \tilde{H}_0$. Then the limit $\big\{\big(\tilde{a}^c(\xi,x), \tilde{H}^c(\xi)\big)\big\}$ of $\big\{\big(\tilde{a}_n^c(\xi,x), \tilde{H}_n^c(\xi)\big)\big\}$ as $n\to\infty$ satisfies $\tilde{a}^c(\infty,x)=a^c(\infty,x)$ and, for every $\xi\in\R$, either $\tilde{H}^c(\xi)$ and $H^c(\xi)$ are both finite or they are both infinite.
\end{lem}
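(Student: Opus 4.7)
The strategy is to establish a shifted one-sided comparison $a^c(\xi,x)\geq\tilde a^c(\xi+\delta_0,x)$ for some $\delta_0>0$; combined with its symmetric counterpart (obtained by swapping $\phi$ and $\tilde\phi$), this will force $a^c(\infty,x)=\tilde a^c(\infty,x)$ after sending $\xi\to+\infty$, and will simultaneously pin down the common finite/infinite status of $H^c$ and $\tilde H^c$.

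The crux is a base-case domination: to produce $k_0\in\N$ and $\delta_0>0$ such that
\[
a^c_{k_0}(\xi,x)\geq\tilde\phi(\xi+\delta_0,x)\quad\hbox{for all }(\xi,x)\in\R^2.
\]
The construction is in the spirit of Lemma~\ref{comkey}. Property~(e) of \eqref{M}, together with continuity, $L$-periodicity and compactness on $[0,L]$, gives a uniform gap $\inf_{x\in\R}[p(0,x)-\tilde\phi(-\infty,x)]=:2\epsilon>0$. Lemma~\ref{infcau} then supplies $k_0$ large enough that $\alpha_{k_0}(x):=a^c_{k_0}(-\infty,x)\geq \tilde\phi(-\infty,x)+\epsilon$ on $\R$. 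Since $a^c_{k_0}(\xi,x)$ increases monotonically to $\alpha_{k_0}(x)$ as $\xi\to-\infty$ with continuous limit (Lemma~\ref{propan}(iii)), Dini's theorem applied on $[0,L]$ combined with $L$-periodicity in $x$ yields some $\Xi>0$ with $a^c_{k_0}(\xi,x)>\tilde\phi(-\infty,x)$ for all $\xi\leq -\Xi$ and $x\in\R$. Choosing $\delta_0:=\Xi+\tilde H_2$, with $\tilde H_2$ from Lemma~\ref{maxmin} applied to $\tilde\phi$, the desired inequality holds trivially for $\xi\geq-\Xi$ because $\tilde\phi(\xi+\delta_0,\cdot)\equiv 0$ there, and follows for $\xi<-\Xi$ from the nonincreasing property of $\tilde\phi$ in its first variable combined with the above gap.

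I would then propagate this estimate by induction on $n$ to obtain $a^c_{n+k_0}(\xi,x)\geq\tilde a^c_n(\xi+\delta_0,x)$ for all $n$; the inductive step uses the shift-intertwining identity $Q_+[\psi(\cdot+\delta_0,\cdot)](\eta,x)=Q_+[\psi](\eta+\delta_0,x)$ (an immediate consequence of \eqref{defirq}) and the order-preserving property of $Q_+$ from Lemma~\ref{wellde}, while the $\max$ in the recursion \eqref{recuran} is handled by combining the base case with the monotonicity of $a^c_n$ in $n$. Passing to the limit $n\to\infty$ gives $a^c(\xi,x)\geq\tilde a^c(\xi+\delta_0,x)$, and sending $\xi\to+\infty$ yields $a^c(\infty,x)\geq\tilde a^c(\infty,x)$; the reverse inequality comes by symmetry, proving the first conclusion.

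For the second conclusion, I would first observe that each of $H^c$ and $\tilde H^c$ is either identically $+\infty$ or everywhere finite on $\R$: $H^c(\xi)$ is nonincreasing in $\xi$ (since $a^c$ is nonincreasing in its first argument) and satisfies $H^c(\xi+kL)=H^c(\xi)-kL$ by Lemma~\ref{infian}(ii), and together these force the dichotomy. The shifted comparison $a^c(\xi+x,x)\geq\tilde a^c(\xi+\delta_0+x,x)$ then gives $\tilde H^c(\xi+\delta_0)\leq H^c(\xi)$, so $\tilde H^c\equiv+\infty$ implies $H^c\equiv+\infty$; the symmetric estimate gives the converse, and the claim follows for every $\xi$. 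The main technical obstacle is the uniform convergence needed in the base case, since $a^c_{k_0}$ is defined on the unbounded strip $\R\times\R$; the $L$-periodicity in $x$ reduces this to a compact set where Dini's theorem applies, after which the rest is bookkeeping.
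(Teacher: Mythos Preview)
Your proposal is correct and follows essentially the same approach as the paper: establish a shifted base-case domination via Lemma~\ref{infcau}, propagate it by the induction argument of Lemma~\ref{comkey}, pass to the limit, and invoke symmetry. The only cosmetic differences are that the paper runs the first comparison in the opposite direction (showing $\tilde a^c_{n_0}$ dominates a shift of $\phi$) and handles the $H^c$ conclusion by forcing the shift to lie in $L\Z$ so that \eqref{hxiinf} gives a direct two-sided bound $|\tilde H^c(\xi)-H^c(\xi)|\leq C$, whereas you first argue the everywhere-finite/everywhere-infinite dichotomy for each $H^c$ separately and then compare; both routes are equally valid.
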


\begin{proof}
By the same proof as that in Lemma~\ref{infcau}, we see that $\tilde{a}_n^c(-\infty,x)$ converges to $p(0,x)$ as $n\to\infty$ uniformly in $x\in\R$. Since $\phi(-\infty,x)<p(0,x)$ and these functions are $L$-periodic in $x$, there exists $n_0\in\N$ such that $\tilde{a}_{n_0}^c(-\infty,x)>\phi(-\infty,x)$ for all $x\in\R$. This implies that  
$$\tilde{a}_{n_0}^c(\hat{H},x)> \phi(-\infty,x) \,\hbox{ for } x\in\R\, \hbox{ and all large negative } \hat H.$$ 
We fix such a $\hat{H}$ with the additional property that $\hat{H}-H_2\in L\Z$, where $H_2$ is given in Lemma~\ref{maxmin} such that $\phi(\xi,\cdot)\equiv 0$ if $\xi\geq H_2$. 
Since $\tilde{a}_{n_0}^c(\xi,x)$ and $\phi(\xi,x)$ are both nonincreasing in $\xi\in\R$, it follows that 
$$\tilde{a}_{n_0}^c(\xi+\hat{H}-H_2,x)\geq \phi(\xi,x)\, \hbox{ for all }\, \xi\in\R,\,x\in\R.$$ 
Then by an induction argument similar to that used in the proof of Lemma~\ref{comkey}, we obtain 
$$ \tilde{a}_{n_0+k}^c(\xi+\hat{H}-H_2,x)> a_{k}^c(\xi,x)\, \hbox{ for all } \,\xi\in\R,\,x\in\R,\,k\in\N.$$
Taking the limit as $k\to\infty$ in the above inequality yields that 
$$\tilde{a}^c(\xi+\hat{H}-H_2,x)\geq a^c(\xi,x)\,  \hbox{ for all } (\xi,x)\in \R^2,$$ whence 
$$\tilde{a}^c(\infty,x)\geq a^c(\infty,x)\, \hbox{ for all }\,x\in\R, \quad \hbox{and}\quad \tilde{H}^c(\xi)\geq H^c(\xi-\hat{H}+H_2)\,\hbox{ for all }\,\xi\in\R. $$
Furthermore, since $\hat{H}-H_2\in L\Z$, it follows from \eqref{hxiinf} that 
$$\tilde{H}^c(\xi)\geq H^c(\xi)+\hat{H}-H_2\, \hbox{ for all } \xi\in\R.$$

In a similar way, by reversing the roles of $\big(\tilde{a}_n^c(\xi,x), \tilde{H}_n^c(\xi)\big)$ and $\big(a_n^c(\xi,x), H_n^c(\xi)\big)$ in the above arguments, we obtain two real numbers $\bar{H}$ and $\tilde{H}_2$ ($\tilde{H}_2$ is a constant such that $\tilde{\phi}(\xi,\cdot)\equiv 0$ if $\xi\geq \tilde{H}_2$) with $\bar{H}-\tilde{H}_2\in L\Z$ such that 
$$a^c(\xi+\bar{H}-\tilde{H}_2,x)\geq \tilde{a}^c(\xi,x)\,  \hbox{ for all } (\xi,x)\in \R^2,$$ 
and hence 
$$a^c(\infty,x)\geq \tilde{a}^c(\infty,x)\, \hbox{ for all }\,x\in\R, \quad \hbox{and}\quad H^c(\xi)\geq \tilde{H}^c(\xi)
+\bar{H}-\tilde{H}_2\,\hbox{ for all }\,\xi\in\R.$$
The proof of Lemma~\ref{independc} is now complete.
\end{proof}

Summarising the above results we immediately obtain
\begin{pro}\label{disspeed}
Let $c_+$ be given in \eqref{defic}. Then $c_+>-\infty$ and is independent of the choice of $\phi\in\mathcal{M}$
in the recursion \eqref{recuran} leading to $a^c(\infty,x)$.
\end{pro}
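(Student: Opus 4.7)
The plan is to obtain Proposition~\ref{disspeed} as an immediate synthesis of Lemmas~\ref{proc} and \ref{independc}, so the ``proof'' is essentially a bookkeeping argument combining what has already been established. There is really no new technical obstacle here; the proposition is recording the consequences of the preceding lemmas in one place.

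First, I would note that $c_+ > -\infty$ is precisely the first conclusion of Lemma~\ref{proc}, whose proof used the large-negative-$c$ argument built from the fact that $U^{n_0}[\phi(-\infty,\cdot)](x)$ converges uniformly to $p(0,x)$ by (A4), followed by applying Lemma~\ref{comkey}. So nothing more needs to be said for this part beyond citing Lemma~\ref{proc}.

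Next, for independence of $c_+$ from the choice of $\phi$, fix two functions $\phi, \tilde\phi \in \mathcal{M}$ and let $\{(a_n^c, H_n^c)\}$ and $\{(\tilde a_n^c, \tilde H_n^c)\}$ be the corresponding iteration sequences with limits $a^c(\infty,x)$ and $\tilde a^c(\infty,x)$. By Lemma~\ref{independc},
\[
\tilde a^c(\infty, x) = a^c(\infty, x) \quad \text{for every } x \in \mathbb{R}, \; c \in \mathbb{R}.
\]
Combined with Lemma~\ref{acinfty}, this means the two dichotomies ``$\equiv p(0,\cdot)$'' versus ``$\equiv 0$'' occur for exactly the same values of $c$. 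Consequently, the sets
\[
\bigl\{c \in \mathbb{R} : a^c(\infty, x) \equiv p(0,x)\bigr\} \quad \text{and} \quad \bigl\{c \in \mathbb{R} : \tilde a^c(\infty, x) \equiv p(0,x)\bigr\}
\]
coincide, and taking suprema in \eqref{defic} gives the same value of $c_+$ in both cases.

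In short, I expect no genuine obstacle: Lemma~\ref{proc} handles finiteness from below, and Lemma~\ref{independc} together with Lemma~\ref{acinfty} handles $\phi$-independence. The only tiny point worth flagging is to make clear that the independence is a statement about the value of $c_+$, not about the pointwise sequence $a_n^c$ itself (which of course does depend on $\phi$); this is precisely why Lemma~\ref{independc} is phrased in terms of the limiting quantity $a^c(\infty,\cdot)$ rather than the approximants.
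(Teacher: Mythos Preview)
Your proposal is correct and matches the paper's approach exactly: the paper's own proof is simply the sentence ``Summarising the above results we immediately obtain,'' which is precisely the synthesis of Lemma~\ref{proc} (for $c_+>-\infty$) and Lemma~\ref{independc} (for $\phi$-independence) that you describe.
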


Notice that if $\xi$ is replaced by $x+\xi-(n+1)c$ in \eqref{recuran}, then 
\begin{equation}\label{superan}
\left.
\begin{split}
&a_{n+1}^{c}\big(x+\xi-(n+1)c,x\big)\\
&=\max\Big\{\phi\big(x+\xi-(n+1)c,x\big), Q_+\big[a_n^{c}\big]\big(x+\xi-nc,x\big)\Big\}\\
&\geq Q_+\big[a_n^{c}\big]\big(x+\xi-nc,x\big)\\
&= U\big[a_n^{c}(\cdot+\xi-nc, \cdot)\big](x)\;\; \mbox{ for all $(\xi,x)\in\R^2$.}
\end{split}
\right\}
\end{equation}
 We will make use of this observation to prove that $c_+$ is the rightward spreading speed for the recursion 
\[
u_{n+1}=U[u_n],\; u_0\in\mathcal{C},\; n=0,1,2,....
\]

\begin{pro}\label{r-speed}
Let $c_+$ be given in \eqref{defic}.  Suppose $u_0\in\mathcal{C}$ has left supporting point $g_0=-\infty$ and right supporting point $h_0<\infty$, and
\begin{equation}\label{assinit}
\liminf_{x\to-\infty}\big(p(0,x)-u_0(x)\big)>0.
\end{equation}
If for every $C\in\R$, 
\begin{equation}\label{assinitcon}
\lim_{n\to\infty} \big|U^n[u_0](x)-p(0,x)\big|=0\,\hbox{ uniformly in }\,x\in (-\infty, C],
\end{equation}
then $c_+>0$ and 
\begin{equation}\label{rightd1}
\lim_{n\to\infty} \sup_{x\geq c_1n} U^n[u_0](x)=0\quad \hbox{for any } \,c_1> c_+,
\end{equation}
\begin{equation}\label{rightd2}
\lim_{n\to\infty} \sup_{x\leq c_2n} \big|U^n[u_0](x)-p(0,x)\big|=0\quad \hbox{for any } \,c_2< c_+.
\end{equation}
Moreover, 
\begin{equation}\label{liminfc}
\liminf_{n\to\infty} \frac{h_+\big(n\omega;u_0\big)}{n}\geq c_+.
\end{equation}
\end{pro}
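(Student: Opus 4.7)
My plan splits into four parts: the upper bound \eqref{rightd1}, the positivity $c_+>0$, the lower bound \eqref{rightd2}, and then \eqref{liminfc} as a corollary of \eqref{rightd2}.

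For the upper bound, fix $c_1>c_+$ and pick $c\in(c_+,c_1)$. The identity \eqref{superan}, read with $f_n(x):=a_n^{c}(x+\xi_0-nc,x)$, says $f_{n+1}\ge U[f_n]$, so by the monotonicity (A1) of $U$ and induction,
\[
U^{n+1}[u_0](x)\le a_n^{c}(x+\xi_0-nc,x)\le a^{c}(x+\xi_0-nc,x)
\]
as soon as we can produce $\phi\in\mathcal{M}$ and $\xi_0\in\R$ with $U[u_0](x)\le\phi(x+\xi_0,x)$ on $\R$. To produce such $\phi$, I would first replace $u_0$ by $U[u_0]$ and use \eqref{assinit} combined with the strong maximum principle applied to $p-U[u_0]$ to obtain a uniform gap $p(0,\cdot)-U[u_0]\ge\sigma>0$ on any prescribed compact interval; since $U[u_0]$ is compactly supported on the right, a suitable $\phi$ with $\phi(-\infty,\cdot)$ fitted into this gap, shifted by a very negative $\xi_0$, then dominates $U[u_0]$. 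Since $c>c_+$, Lemma \ref{proc} gives $a^{c}(\infty,x)\equiv 0$; combined with $L$-periodicity in $x$, continuity, and monotonicity in $\xi$, Dini's theorem yields $\sup_{x\in\R}a^{c}(\xi,x)\to 0$ as $\xi\to+\infty$. For $x\ge c_1 n$ we have $x+\xi_0-nc\ge(c_1-c)n+\xi_0\to\infty$, so \eqref{rightd1} follows.

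For the positivity $c_+>0$, take $c=0$ in \eqref{recuran} and use the bound $a_n^{0}(\xi,x)\ge Q_+^n[\phi](\xi,x)$ from \eqref{auxicom}, together with the moving-frame identity $Q_+^n[\phi](-\infty,x)=U^n[\phi(-\infty,\cdot)](x)$ established in the proof of Lemma \ref{wellde}. By (A4), the right-hand side converges uniformly to $p(0,\cdot)$, and since $\phi(-\infty,\cdot)<p(0,\cdot)$ with a uniform $L$-periodic gap, for some $n_0$ we obtain $a_{n_0}^{0}(-\infty,x)>\phi(-\infty,x)$ uniformly in $x$. Uniform continuity of $a_{n_0}^{0}$ in its first variable (Lemma \ref{propan}(iii)) extends this to $a_{n_0}^{0}(H_2,x)>\phi(-\infty,x)$, and Lemma \ref{comkey} then forces $a^{0}(\infty,x)\equiv p(0,x)$, whence $c_+\ge 0$. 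The strict inequality $c_+>0$ comes from the joint continuity of $a_n^{c}$ in $(c,\xi)$, which lets the same argument run with $c$ slightly positive.

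For the lower bound, fix $c_2<c_+$ and pick $c\in(c_2,c_+)$. By Lemma \ref{proc} and the combination of $a^{c}\le p(0,\cdot)$ with the monotonicity in $\xi$, $a^{c}(\xi,x)\equiv p(0,x)$ for all $\xi$. Dini-type reasoning using the equicontinuity from Lemma \ref{propan}(iv) and $L$-periodicity in $x$ then gives: for any $\xi_3\in\R$ and $\epsilon>0$, one has $a_n^{c}(\xi,y)\ge p(0,y)-\epsilon$ for all $\xi\le\xi_3$, $y\in\R$ and $n\ge N$. I would then establish
\[
U^{n_0+n}[u_0](y)\ge a_n^{c}(\xi_0+y-nc,y)\qquad(y\in\R,\ n\ge 0)
\]
by induction on $n$; the base case reduces to $U^{n_0}[u_0](y)\ge\phi(\xi_0+y,y)$, which is arranged by taking $\xi_0$ large enough that the right supporting point $H_0(\xi_0)$ of $y\mapsto\phi(\xi_0+y,y)$ falls inside the interval where \eqref{assinitcon} forces $U^{n_0}[u_0]\ge p(0,\cdot)-\epsilon>\phi(-\infty,\cdot)$. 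The inductive step splits the max in \eqref{recuran}: the $Q_+$-branch is immediate from (A1) and the inductive hypothesis, while the $\phi$-branch $U^{n_0+n+1}[u_0](y)\ge\phi(\xi_0+y-(n+1)c,y)$ requires the free boundary $h_+((n_0+n+1)\omega;u_0)$ to have caught up with the right support of the shifted $\phi$—breaking this apparent circularity by a bootstrap that self-improves the lower bound on $h_+$ is the heart of the argument. Once the comparison is in place, $y\le c_2 n$ gives $\xi_0+y-nc\le\xi_0+(c_2-c)n\to-\infty$, so $a_n^{c}(\xi_0+y-nc,y)\ge p(0,y)-\epsilon$ for $n$ large, which together with $U^n[u_0]\le p(0,\cdot)$ yields \eqref{rightd2}. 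Finally, \eqref{liminfc} is a direct corollary: for any $c_2<c_+$, taking $y=c_2 n$ in \eqref{rightd2} shows $U^n[u_0](c_2 n)\to p(0,c_2 n)\ge\min_{x}p(0,x)>0$, forcing $h_+(n\omega;u_0)\ge c_2 n$ for large $n$, and letting $c_2\uparrow c_+$ finishes the argument. The main technical obstacle is the free-boundary bookkeeping in the lower-bound induction: the $\max$ in \eqref{recuran} renders $a_n^{c}$ a super-solution by construction, and the bootstrap needed to justify the $\phi$-branch is where the greatest care will be required.
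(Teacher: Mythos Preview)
Your sketch has two genuine gaps, one in the positivity step and one in the lower bound; both are resolved in the paper by ideas you do not invoke.

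\textbf{Positivity of $c_+$.} Your argument establishes only $a_{n_0}^{0}(-\infty,x)>\phi(-\infty,x)$, and then claims that ``uniform continuity of $a_{n_0}^{0}$ in its first variable extends this to $a_{n_0}^{0}(H_2,x)>\phi(-\infty,x)$.'' This inference is false: uniform continuity controls small shifts in $\xi$, not the passage from $-\infty$ to a fixed finite value $H_2$. In fact $a_{n_0}^{0}(-\infty,x)>\phi(-\infty,x)$ holds for \emph{every} $c$ (it is the content of Lemma~\ref{infcau}), so it cannot by itself distinguish $c=0$ from $c>0$; the argument does not use \eqref{assinitcon} at all, and without it there is no reason to expect the front of $a_{n_0}^{0}$ to have passed $H_2$. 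The paper instead constructs $\phi$ so that $\phi(x,x)\ge u_0(x)$, uses \eqref{superan} with $\xi=0$ to obtain $a_n^{0}(x,x)\ge U^n[u_0](x)$, and then \eqref{assinitcon} forces $a^{0}(x,x)=p(0,x)$; periodicity in the second variable converts this into $a^{0}(\infty,x)=p(0,x)$.

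\textbf{Lower bound.} The circularity you flag in the $\phi$-branch of your induction is real, and ``bootstrap'' does not name a mechanism for breaking it: to verify $U^{n_0+n+1}[u_0](y)\ge\phi(\xi_0+y-(n+1)c,y)$ for all $y$ you would need $U^{n_0+n+1}[u_0]$ to be already close to $p$ over an interval growing linearly in $n$, which is essentially \eqref{rightd2} itself. The paper sidesteps this entirely. Since $c_2<c_+$, Lemma~\ref{comkey} supplies $n_0$ with $a_{n_0}^{c_2}(H_2,x)>\phi(-\infty,x)$, and monotonicity then gives $a_n^{c_2}(\xi,x)>\phi(\xi,x)$ for all $(\xi,x)$ and all $n\ge n_0$. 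Hence the maximum in \eqref{recuran} is attained by the $Q_+$ term, so that
\[
a_{n+1}^{c_2}(x+\xi-c_2,x)=U[a_n^{c_2}(\cdot+\xi,\cdot)](x)\qquad(n\ge n_0),
\]
an \emph{equality}, not merely an inequality. Thus $x\mapsto a_{n_0+k}^{c_2}(x-kc_2,x)$ is a genuine $U$-orbit, and once the single comparison $U^{n_1}[u_0](x)\ge a_{n_0}^{c_2}(x,x)$ is arranged (which is easy: $a_{n_0}^{c_2}(x,x)$ has finite right support and is uniformly bounded away from $p(0,x)$, so \eqref{assinitcon} handles it), order-preservation of $U$ gives $U^{n_1+k}[u_0](x)\ge a_{n_0+k}^{c_2}(x-kc_2,x)$ for all $k$, with no $\phi$-branch to contend with. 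Both \eqref{rightd2} and \eqref{liminfc} then follow directly from this comparison. Your upper-bound argument and your derivation of \eqref{liminfc} from \eqref{rightd2} are essentially the paper's.
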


\begin{proof}
Without loss of generality, we assume that the right supporting point of $u_0$ is $h_0=0$.  
By replacing $u_0$ by $U[u_0]$ if necessary, we can also assume without loss of generality that, $u_0$ satisfies the assumptions \eqref{assinit}, \eqref{assinitcon} and that 
$$u_0(x)<p(0,x)(1-\epsilon)\, \hbox{ for all } x\in\R \hbox{ for some } \epsilon>0.$$ 
Then we choose some continuous function $l:\R\to [0, 1-\epsilon/2)$ such that $l(x)$ is strictly decreasing in $x\in\R$, that $l(-\infty)=1-\epsilon/2$, $l(0)=1-\epsilon$, $l(1)=0$, and that 
\begin{equation}\label{inicomp}
l(x)p(0,x)\geq u_0(x)\,\hbox{ for all }\,x\in\R.
\end{equation}
Set 
$$\phi(\xi,x):=l(\xi)p(0,x) \,\hbox{ for } (\xi,x)\in\R^2.$$ 
It is easy to see that $\phi\in\mathcal{M}$. In what follows, we will make use of the recursions \eqref{recuran} and \eqref{recurhn} starting from $\phi$ to prove all the conclusions.

We first show that $c_+\in (0,+\infty]$. (We will show in  Proposition \ref{r-speed-finite} that $c_+<+\infty$.) Let $\big\{\big(a_n^{0}(\xi,x), H_n^{0}(\xi)\big)\big\}_{n\in\N}$ be the sequence obtained from the recursions \eqref{recuran} and \eqref{recurhn} with $c=0$.
 By choosing $\xi=0$ in \eqref{superan}, we have
\[
\mbox{ $a_{n+1}^{0}\big(x,x\big) \geq  U\big[a_n^{0}(\cdot, \cdot)\big](x)$ for all $x\in\R$.}
\]
 Then the comparison principle Proposition~\ref{opcompare}~together with \eqref{inicomp} implies that 
 $$a_{n}^{0}(x,x)\geq U^n[u_0](x)\,\hbox{ for all } x\in\R.$$ Furthermore, due to the assumption \eqref{assinitcon}, we see that 
 $$\lim_{n\to\infty}a_{n}^{0}\big(x,x\big)=p(0,x) \, \hbox{ locally uniformly in } x\in\R,$$ 
 whence $a^0(x,x)\equiv p(0,x)$ by Lemma \ref{infian} (i). Since the functions $a^0(\xi,x)$ and $p(0,x)$ are both $L$-periodic in $x\in\R$, we obtain
\[
 a^0(\infty,x)=\lim_{k\to\infty}a^0(x+kL,x)=\lim_{k\to\infty} a^0(x+kL, x+kL)= p(0,x). 
\]
It then follows from Lemma~\ref{proc} that $ c_+>0$.

Next, we prove the convergence property stated in \eqref{rightd1}. If $c_+=+\infty$ then there is nothing to prove. So we assume $c_+<+\infty$. 

Let $\big\{\big(a_n^{c_+}(\xi,x), H_n^{c_+}(\xi)\big)\big\}_{n\in\N}$ be the sequence obtained from the recursions \eqref{recuran} and \eqref{recurhn} with $c=c_+$. By choosing $\xi=0$ and $c=c_+$ in \eqref{superan}, we have 
$$a_{n+1}^{c_+}\big(x-(n+1)c_+,x\big)\geq U\big[a_n^{c_+}(\cdot-nc_+, \cdot)\big](x)\,\hbox{ for all }\, x\in\R,\,n\in\N.$$ 
It then follows from the comparison principle Proposition~\ref{opcompare} and \eqref{inicomp} that
\begin{equation*}
a_{n}^{c_+}\big(x-nc_+,x\big) \geq U^n[u_0](x)  \,\hbox{ for all }\,x\in\R.
\end{equation*}
Furthermore, for any $c_1>c_+$, suppose $\{x_k\}\subset [c_1n,\infty)$ satisfies
\[
\lim_{k\to\infty}U^n[u_0](x_k)=\sup_{x\geq c_1 n}U^n[u_0](x).
\]
Then
since $a_{n}^{c_+}(\xi,x)$ is nonincreasing in $\xi\in\R$ and nondecreasing in $n\in\N$, 
we have
\[
\begin{array}{ll}
\sup_{y\in\R}a^{c_+}\big(nc_1-nc_+,y\big)\!\!& \geq\,\, a^{c_+}(nc_1-nc_+, x_k)\\
\smallskip
&\geq\,\, a^{c_+}(x_k-nc_+, x_k)\\
\smallskip
&\geq\,\, a^{c_+}_n(x_k-nc_+,x_k)\\
\smallskip
&\geq \,\,U^n[u_0](x_k).
\end{array}
\]
Letting $k\to\infty$, we obtain
\begin{equation}
\label{c1>c+}
\sup_{y\in\R}a^{c_+}\big(nc_1-nc_+,y\big)\geq 
\sup_{x\geq c_1n} U^n[u_0](x)\geq 0\,\hbox{ for all }\,n\in\N.
\end{equation}
Since $a^{c_+}(\infty,x)\equiv 0 $ by Lemma \ref{proc} and since $a^{c_+}(nc_1-nc_+,x)$ converges to $a^{c_+}(\infty,x)$ as $n\to\infty$ uniformly in $x\in\R$,  \eqref{rightd1} follows by letting $n\to\infty$ in \eqref{c1>c+}. 

We now prove \eqref{rightd2} and \eqref{liminfc}.
Fix $c_2<c_+$ and let $\big\{\big(a_n^{c_2}(\xi,x), H_n^{c_2}(\xi)\big)\big\}_{n\in\N}$ be the sequence obtained from the recursions \eqref{recuran} and \eqref{recurhn} with $c=c_2$. We claim that there exists $n_0\geq 0$ such that 
\begin{equation}\label{eqrgc}
a_{n+1}^{c_2}(x-c_2,x)=U[a_{n}^{c_2}(\cdot,\cdot)](x)\,\hbox{ for all } \,x\in\R,\,n\geq n_0.
\end{equation} 
Since $c_2<c_+$, it follows from Lemma~\ref{proc} that $a^{c_2}(\infty,x)\equiv p(0,x)$. By Lemma~\ref{comkey}, there exists some $n_0\geq 0$ such that $a_{n_0}^{c_2}(H_2,x)>\phi(-\infty,x)$ for all $x\in\R$, where $H_2$ is a real number such that $\phi(\xi,x)=0$ for all $\xi\geq H_2$ (with our choice of $\phi$, we may take $H_2=1$).
 Then we easily see $a_{n_0}^{c_2}(\xi,x)> \phi(\xi,x)$ for all $\xi\leq H_2,\,x\in\R$, and hence
\[
a_n^{c_2}(\xi,x)\geq  a_{n_0}^{c_2}(\xi,x)> \phi(\xi,x) \mbox{ for all $\xi\in\R,\,x\in\R^2$ and $n\geq n_0$.}
\]
 Thus, by the definition of $a_{n}^{c_2}$, we have
$$a_{n+1}^{c_2}(x+\xi-c_2,x)=U[a_{n}^{c_2}(\cdot+\xi,\cdot)](x)\,\hbox{ for }\,(\xi, x)\in\R,\,n\geq n_0.$$ 
This gives \eqref{eqrgc} by taking $\xi=0$.

By our choice of $\phi$, we can prove by an induction argument that 
$$a_n^{c_2}(x,x)\leq \max_{0\leq k\leq n} U^k[(1-\epsilon/2)p(0,\cdot)](x)\,\hbox{ for all } x\in\R,\,n\in\N. $$
Since $U^k[(1-\epsilon/2)p(0,\cdot)](x)<p(0,x)$ for all $x\in\R$, $k\in\N$ by the strong parabolic maximum principle, 
and these two functions are $L$-periodic in $x$, there exists $\epsilon_n>0$ such that
\[
\max_{0\leq k\leq n} U^k[(1-\epsilon/2)p(0,\cdot)](x)\leq p(0,x)-\epsilon_n \mbox{ for  $x\in\R$ and  $n\in\N$.}
\]
It follows that 
\[
\mbox{$ a_{n_0}^{c_2}(x,x)\leq  p(0,x)-\epsilon_{n_0}$ for all $x\in\R$.}
\]
 Hence, by 
 the assumption \eqref{assinitcon} and the fact that $a_{n_0}^{c_2}(x,x)=0$ for all $x\geq H^{c_2}_{n_0}(0)$, we have
\begin{equation*}
U^{n_1}[u_0](x)\geq a_{n_0}^{c_2}(x,x)\, \hbox{ for all $ x\in\R$ and large integer $n_1$}.
\end{equation*} 
This together with \eqref{eqrgc} and the comparison principle Proposition~\ref{opcompare} implies that 
\[
U^{n_1+n}[u_0](x)\geq U^n[a_{n_0}^{c_2}(\cdot,\cdot)](x)= a_{n+n_0}^{c_2}\big(x-nc_2,x\big)\,\hbox{ for all } \,x\in\R,\,n\in \N,
\]
which yields
\begin{equation}\label{eqrgc2}
U^{n}[u_0](x)\geq a_{n-n_1+n_0}^{c_2}\big(x-(n-n_1)c_2,x\big)\,\hbox{ for all } \,x\in\R,\,n\geq n_1.
\end{equation} 
Since $a_{n}^{c_2}(\xi,x)$ is nonincreasing in $\xi\in\R$, it then follows that 
$$\inf_{x\leq c_2n} U^{n}[u_0](x)\geq \inf_{y\in\R}a_{n-n_1+n_0}^{c_2}\big(n_1c_2,y\big) \mbox{ for } n\geq n_1+1.$$

Since $a^{c_2}(\infty,x)\equiv p(0,x)$ by Lemma \ref{proc}, we have $a^{c_2}(\xi,x)\equiv a^{c_2}(\infty,x)$ by the monotonicity of $a^{c_2}(\xi,x)$ in $\xi$. Therefore, letting $n\to\infty$ in the above inequality, we deduce
 \eqref{rightd2}. 

Moreover, it follows from \eqref{eqrgc2} that 
$$U^{n}[u_0](x)\geq a_{n-n_1+n_0}^{c_2}\big(x-m_nL,x\big)\, \hbox{ for all } \,x\in\R,\,n\geq n_1,$$
where $m_n$  is the positive integer such that 
\[
m_nL\leq  (n-n_1)c_2 < (m_n+1)L.
\]
It follows that
\[
h_+(n\omega;u_0)\geq H_{n-n_1+n_0}^{c_2}(-m_nL)=H_{n-n_1+n_0}^{c_2}(0)+m_nL\, \mbox{ for all $n\geq n_1$.}
\]
 Thus, 
$$\frac{h_+\big(n\omega;u_0\big)}{n}\geq \frac{H_{n-n_1+n_0}^{c_2}(0)}{n}+\frac{m_nL}{n}.$$
Since $H_{n-n_1+n_0}^{c_2}(0)> 0$ (which follows from the monotonicity of $H_{n}^{c_2}(0)$ in $n$ and our choice of $\phi$), passing to the limit $n\to\infty$ yields 
\[
 \liminf_{n\to\infty}\frac{ h_+(n\omega;u_0)}{n}\geq c_2.
\]
 Since $c_2<c_+$ is arbitrary, this implies \eqref{liminfc}. 
The proof of Proposition~\ref{r-speed} is thereby complete. 
\end{proof}

\begin{rem}\label{scspread}
{\rm It is easy to find sufficient conditions for \eqref{assinitcon} to hold. For example, if the nonlinearity $f$ is of type \eqref{logic}, then  \eqref{assinitcon} holds for any $u_0\in\mathcal{C}$ with left supporting point $g_0=-\infty$ and right supporting point $h_0<\infty$. Indeed, in this case, the spreading-vanishing dichotomy in \cite[Theorem~1.2]{ddl} infers that 
\begin{equation*}
\lim_{t\to\infty}h_+(t;u_0)=+\infty\,\hbox{ and }\, \lim_{t\to\infty} u_+(t,x;u_0)=p(t,x) \hbox{ locally uniformly in }  x\in\R.
\end{equation*}
This in particular implies that, for any $C\in\R$, $U^n[u_0](x)$ converges to $p(0,x)$ uniformly in $x\in [C-L,C]$ as $n\to\infty$. By the spatial $L$-periodicity assumption in \eqref{period}, 
$$U^n[u_0](x-kL)= U^n[u_0(\cdot-kL)](x) \,\hbox{ for }\, x\in\R,\,k\in\Z,\,n\in\N.$$ 
By replacing $u_0$ with some $\tilde{u}_0\in \mathcal{C}$ with left supporting point $\tilde{g}_0=-\infty$ such that $\tilde{u}_0(x)$ is nonincreasing in $x$ and $\tilde{u}_0(x)\leq u_0(x)$ for $x\in\R$ if necessary, one can assume without loss of generality that, $u_0(x)$ is nonincreasing in $x\in\R$. It then follows from the order-preserving property of $U$ in (A1) that $$U^n[u_0](x-kL)\geq U^n[u_0](x)\,\hbox{  for }\,x\in [C-L,C],\,k\in\N,\,n\in\N.$$ 
Therefore, $U^n[u_0](x)$ converges to $p(0,x)$ uniformly in $x\leq C$. 
}
\end{rem}

\begin{pro}\label{r-speed-finite}
Let $c_+$ be given in \eqref{defic}. Then $c_+<+\infty$.
\end{pro}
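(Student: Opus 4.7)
The strategy is to bound $c_+$ from above by the finite rightward spreading speed $\bar c^*_+$ of the Cauchy problem \eqref{cauchy} established in \cite{fyz}. Intuitively, the free boundary \eqref{eqfunbd} spreads no faster than the Cauchy problem, which is unhindered by a boundary condition at $h_+(t)$. I would implement this by setting up an analogous recursion built from the Cauchy solution operator and comparing it, step by step, with the recursion for $a_n^c$.

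More concretely, define the Cauchy operator $\tilde U[\varphi](x):=v(\omega,x;\varphi)$ on nonnegative bounded continuous functions on $\R$, and
\[
\tilde Q[\phi](\xi,y):=\tilde U[\phi(\xi-y+\cdot,\cdot)](y)\quad \mbox{for } \phi\in\mathcal M.
\]
The first key step is the pointwise comparison $Q_+[\phi](\xi,y)\leq \tilde Q[\phi](\xi,y)$ for every $\phi\in\mathcal M$. This is a direct application of the parabolic maximum principle: for the initial datum $\psi(x):=\phi(\xi-y+x,x)$, the free-boundary solution $u_+(t,x;\psi)$ vanishes at its moving right boundary $x=h_+(t;\psi)$, whereas the Cauchy solution $v(t,x;\psi)$ is nonnegative there and satisfies the same PDE on $\{x<h_+(t;\psi)\}$; hence $u_+\leq v$ on this region, and $u_+(\omega,y;\psi)=0\leq v(\omega,y;\psi)$ for $y>h_+(\omega;\psi)$.

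Next, build the Cauchy analogue $\tilde a_n^c$ of \eqref{recuran} by replacing $Q_+$ with $\tilde Q$. Since $\tilde Q$ is order-preserving (by the comparison principle for \eqref{cauchy}), an induction starting from $a_0^c=\tilde a_0^c=\phi$ yields
\[
a_n^c(\xi,x)\leq \tilde a_n^c(\xi,x)\quad\mbox{for all } n\in\N,\; (\xi,x)\in\R^2,
\]
and passing to the limit gives $a^c(\infty,x)\leq \tilde a^c(\infty,x)$. The construction and the entire chain of arguments leading from Lemma \ref{wellde} through Lemma \ref{proc} apply verbatim with $Q_+$ replaced by $\tilde Q$, producing a critical speed $\tilde c_+$ with the property that $\tilde a^c(\infty,\cdot)\equiv 0$ for all $c>\tilde c_+$.

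Finally, I would identify $\tilde c_+\leq \bar c^*_+$ by invoking the spreading-speed theorem of \cite{fyz}: for any $c_1>\bar c^*_+$ and any nonnegative, non-null, compactly supported $v_0\leq p(0,\cdot)$, one has $\sup_{x\geq c_1 n\omega}v(n\omega,x;v_0)\to 0$. Arguing by contradiction via the Cauchy analogue of Lemma \ref{comkey}, if $\tilde a^c(\infty,\cdot)\equiv p(0,\cdot)$ for some $c>\bar c^*_+$, one could iterate $\tilde Q$ on a compactly supported truncation of $\phi$ to produce a sequence whose rightward front exceeds $\bar c^*_+$, contradicting \cite{fyz}. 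Therefore $\tilde c_+\leq \bar c^*_+$, and combining with the comparison above gives $c_+\leq \tilde c_+\leq \bar c^*_+<\infty$.

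The main obstacle lies in the last step: the initial datum $\phi(\xi,x)$ of the recursion is not compactly supported on the left (indeed $\phi(-\infty,x)>0$), while the spreading-speed result in \cite{fyz} is stated for compactly supported data. The technical fix is to observe that the persistent left tail $\phi(-\infty,\cdot)$ only supplies a bounded source from $x=-\infty$ and cannot accelerate the rightward front; this can be formalised either via a truncation-and-comparison argument against compactly supported profiles or by a direct exponential supersolution of the equation linearised at $u=0$, both of which show that the contribution to $\tilde a_n^c(H_2,x)$ from arbitrarily far left does not exceed $\phi(-\infty,x)$ once $c>\bar c^*_+$.
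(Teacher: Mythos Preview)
Your approach is correct but takes a genuinely different route from the paper's proof of this proposition. The paper bounds $f$ from above by a homogeneous logistic nonlinearity $F(u)=\frac{K}{M}u(2M-u)$, invokes the semi-wave $\Phi$ with speed $c^*$ for that equation (known from \cite{dlin}), and uses $\Psi(t,x)=\Phi(c^*t-x+C)$ as an explicit supersolution for the free boundary problem. This yields $\limsup_{n\to\infty} h_+(n\omega;u_0)/n\leq c^*\omega$, which, combined with the already proved lower bound \eqref{liminfc}, gives $c_+\leq c^*\omega<\infty$. By contrast, you compare the free-boundary recursion with the Cauchy recursion term by term to obtain $c_+\leq \tilde c_+$, and then import the finiteness of the Cauchy speed from \cite{fyz}. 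Interestingly, your comparison $a_n^c\leq \tilde a_n^c$ and the conclusion $c_{+,\mu}\leq \bar c_+$ is exactly what the paper proves later, in Lemma~\ref{monoconv} of Section~\ref{sec5}, where it is used for a different purpose (the $\mu\to\infty$ limit). So your argument is not only valid but anticipated by the paper itself; it simply was not chosen for Proposition~\ref{r-speed-finite}. The paper's choice is more self-contained at this stage of the argument, since it does not rely on the Cauchy spreading-speed theory of \cite{fyz}; your choice is more systematic within the recursion framework and immediately gives the sharper bound $c_+\leq \bar c_+=\bar c^*_+\omega$ (note the factor of $\omega$: your $\tilde c_+$ should be compared with $\bar c^*_+\omega$, not with $\bar c^*_+$).

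The ``obstacle'' you flag in the last paragraph is not a real one. The quantity $\tilde c_+$ you construct via the recursion is exactly the $\bar c_+$ defined in \eqref{decaus}, and the identification $\bar c_+/\omega=\bar c^*_+$ with the spreading speed of \cite{fyz} (hence its finiteness) is precisely Lemma~\ref{lemscau}, which the paper attributes to \cite[Theorem~2.2]{fyz}. The passage from the half-line recursion profile $\phi\in\mathcal M$ to the compactly supported spreading-speed statement is part of that standard theory (as in \cite{w2,fyz}); you do not need to reinvent it via truncation or an ad hoc exponential supersolution. Once you cite Lemma~\ref{lemscau}, your proof closes cleanly with $c_+\leq \bar c_+<\infty$.
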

\begin{proof}

By the assumptions on $f$, there exists $K>0$ such that 
\[
f(t,x,u)\leq Ku \mbox{ for } (t,x)\in \R^2, u\in [0,M];\; f(t,x,u)\leq 0 \mbox{ for } u\geq M, \; (t,x)\in\R^2.
\]
It follows that
\[
f(t,x,u)\leq F(u):=\frac{K}{M}u(2M-u) \mbox{ for } (t,x)\in\R^2,\; u\in [0, 2M].
\]

By \cite{dlin}, there exist $c^*>0$ and $\Phi(\xi)\in C^2([0,\infty))$ satisfying
\begin{equation*}
\left\{\baa{l}
\medskip
d\Phi_{xx}-c^*\Phi_x+F(\Phi)=0, \; 0<\Phi(x)<2M \quad\hbox{for } x\in (0,\infty),\vspace{3pt}\\
\Phi(0)=0, \quad \mu \Phi'(0)=c^*,\quad \Phi(\infty)=2M. 
\eaa\right.
\end{equation*} 
Let 
$$\Psi(t,x):=\Phi(c^*t-x+C),\quad  H(t):=c^*t+C  $$
with $C>0$ to be determined later. Then
\[
\left\{
\begin{array}{ll}
\medskip
\Psi_t=d\Psi_{xx}+F(\Psi)\geq d\Psi_{xx}+f(t,x,\Psi) &\mbox{ for } x<H(t), \; t>0,\\
\medskip
\Psi(t,H(t))=0,\; H'(t)=-\mu \Psi_x(t, c^*x+C)& \mbox{ for } t>0,\\
\medskip
\Psi(0,x)=\Phi(-x+C) & \mbox{ for } x\leq H(0)=C.
\end{array}
\right.
\]

Since $f(t,x, u)\leq 0$ for $u\geq M$, a simple comparison argument shows that $p(0,x)\leq M$ for all $x\in\R$.
Let $u_0$ be chosen as in Proposition \ref{r-speed}. Then
\[
u_0(x)<M \,\mbox{ for } x<h_0;\; u_0(x)=0 \,\mbox{ for } x\geq h_0.
\]
Therefore we can fix $C>0$ large enough so that
\[
\Psi(0,x)=\Phi(-x+C)>M>u_0(x)\, \mbox{ for } x\leq h_0.
\]
The comparison principle then yields
\[
h_+(t;u_0)\leq H(t)=c^*t+C,\; u_+(t,x; u_0)\leq \Psi(t,x) \mbox{ for } x\leq h_+(t; u_0),\; t>0.
\]
It then follows that
\[
\limsup_{t\to\infty}\frac{h_+(t;u_0)}{t}\leq c^*.
\]
In particular,
\[
\limsup_{n\to\infty}\frac{h_+(n\omega; u_0)}{n}\leq c^*\omega.
\]
In view of \eqref{liminfc}, we deduce $c_+\leq c^*\omega<+\infty$.
\end{proof}

We are now ready to prove the existence of spreading speed for the problem \eqref{eqfunbd}.

\begin{theo}\label{therspeed} 
Let $c_+^*=c_+/\omega$ where $c_+$ is given in \eqref{defic}. If there exsits $u_0\in\mathcal{H}_+(h_0)$ such that $u_0(x)\leq p(0,x)$ for all $x\in (-\infty,h_0]$ and $u_0$ satisfies  \eqref{assinit} and \eqref{assinitcon}, then
\begin{equation}\label{ctprop}
\lim_{t\to\infty} \sup_{x\leq ct} \big|u_+\big(t,x;u_0\big)-p(t,x)\big|=0\quad \hbox{for any } \,c< c_+^*,
\end{equation}
and 
\begin{equation}\label{cthprop}
\lim_{t\to\infty} \frac{h_+(t;u_0)}{t}=c_+^*.
\end{equation}
\end{theo}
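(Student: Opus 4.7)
The plan is to transfer the discrete-time spreading theorem for the Poincar\'e iterate $U$ (Proposition~\ref{r-speed}) to the continuous-time setting of problem \eqref{eqfunbd}. Since $p(t,x)$ is $\omega$-periodic in $t$ and $U^n[u_0](x)=u_+(n\omega,x;u_0)$, Proposition~\ref{r-speed} immediately delivers, at the discrete times $t=n\omega$,
\[
\lim_{n\to\infty}\sup_{x\le c_2 n}\bigl|u_+(n\omega,x;u_0)-p(n\omega,x)\bigr|=0,\qquad \lim_{n\to\infty}\sup_{x\ge c_1 n}u_+(n\omega,x;u_0)=0,
\]
for any $c_2<c_+<c_1$, together with the one-sided bound \eqref{liminfc}. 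What remains is to interpolate the first convergence across the periodicity interval and to establish a matching upper bound on $h_+$, which together yield \eqref{ctprop} and \eqref{cthprop}.

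For \eqref{ctprop}, I would fix $c<c_+^*=c_+/\omega$ and choose $c_2\in(c\omega,c_+)$. Writing $t=n\omega+s$ with $n\in\N$ and $s\in[0,\omega]$, the closeness of $u_+(n\omega,\cdot;u_0)$ to $p(n\omega,\cdot)=p(0,\cdot)$ on $(-\infty,c_2 n]$ propagates to closeness of $u_+(n\omega+s,\cdot;u_0)$ to $p(n\omega+s,\cdot)$ on $(-\infty,c(n\omega+s)]$ by interior parabolic regularity, essentially by applying property~(A3) to a time-$s$ analogue of $U$ while using the lower bound on $h_+$ to keep the moving boundary strictly to the right of the comparison interval. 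The loss of $O(1)$ in the spatial range is absorbed by the gap $c_2-c\omega>0$. For the lower bound in \eqref{cthprop}, I would combine \eqref{liminfc} with the monotonicity of $t\mapsto h_+(t;u_0)$ from \cite{ddl}: for $t\in[n\omega,(n+1)\omega]$, $h_+(t)/t\ge h_+(n\omega)/((n+1)\omega)$, which yields $\liminf_{t\to\infty}h_+(t;u_0)/t\ge c_+/\omega=c_+^*$.

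The main obstacle is the matching upper bound $\limsup_{t\to\infty}h_+(t;u_0)/t\le c_+^*$. For any $c_1>c_+$, I would revisit the supersolution construction of Section~\ref{sec3}. After arranging $u_0(x)\le \phi(x,x)$ for an admissible $\phi\in\mathcal{M}$, iterating the comparison principle (Proposition~\ref{opcompare}) gives $u_+(n\omega,x;u_0)\le a_n^{c_1}(x-nc_1,x)$, hence $h_+(n\omega;u_0)\le H_n^{c_1}(-nc_1)$. Using the periodicity identity $H_n^{c_1}(\xi+kL)=H_n^{c_1}(\xi)-kL$ (the discrete analogue of \eqref{hxiinf}) and decomposing $nc_1=kL+r$ with $r\in[0,L)$, this becomes $h_+(n\omega;u_0)\le nc_1+(H_n^{c_1}(-r)-r)$, reducing the problem to showing $H_n^{c_1}(-r)=o(n)$ uniformly in $r\in[0,L)$. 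Since $a^{c_1}(\infty,\cdot)\equiv 0$ by Lemma~\ref{proc}, in the case that some $H^{c_1}(\xi_0)$ is finite, the periodicity identity together with Lemma~\ref{infian} yields the uniform boundedness of $H_n^{c_1}(-r)$, giving $h_+(n\omega;u_0)\le c_1 n+O(1)$. Letting $c_1\downarrow c_+$ and passing to continuous $t$ via the Lipschitz bound on $h_+$ from \cite{ddl} completes the argument.

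The genuinely delicate step will be the borderline regime in which $a^{c_1}(\infty,\cdot)\equiv 0$ yet $H^{c_1}(\xi)\equiv+\infty$, where the support of the supersolution does not itself provide a useful bound on $h_+$. There one is forced to couple the $L^\infty$-decay given by the second display above with the Stefan condition $h_+'(t)=-\mu u_{+,x}(t,h_+(t))$, using parabolic boundary estimates to convert pointwise smallness of $u_+$ near the front into a small integrated contribution to $h_+$ over each periodicity interval, and then summing the resulting geometric bound across $[0,n\omega]$.
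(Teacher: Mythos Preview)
Your strategy for \eqref{ctprop} and for the lower bound in \eqref{cthprop} is essentially the paper's, though the paper executes the continuous-time interpolation via a spatial-periodicity shift rather than generic parabolic regularity: writing $cn\omega=x_n+x_n'$ with $x_n\in[0,L]$, $x_n'\in L\Z$, one has $u_+(t,x+ct+cn\omega;u_n)=u_+(t,x+ct+x_n;u_n(\cdot+x_n'))$, and then applies \eqref{rightd2}, \eqref{liminfc} and (A3) to the shifted initial datum. Your lower bound for $h_+(t)/t$ via \eqref{liminfc} and monotonicity of $h_+$ is fine; the paper instead derives it from \eqref{ctprop} by contradiction, but both work.

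The real divergence is in the upper bound $\limsup_{t\to\infty}h_+(t)/t\le c_+^*$. Your route through $h_+(n\omega)\le H_n^{c_1}(-nc_1)$ is an unnecessary detour that forces you into the ``borderline regime'' $a^{c_1}(\infty,\cdot)\equiv 0$ with $H^{c_1}\equiv+\infty$, where the support of $a_n^{c_1}$ gives no control on $h_+$. Your proposed fix---integrating small Stefan-condition contributions over each period and summing---is vague, and it is not clear how smallness of $u_+$ at discrete times $n\omega$ near $x\approx c_1 n$ (which is all \eqref{rightd1} gives) translates into smallness of $h_+'$ throughout $[n\omega,(n+1)\omega]$ without first locating the front.

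The paper sidesteps the $H_n^{c_1}$ analysis entirely. It first proves the continuous-time decay
\[
\lim_{t\to\infty}\sup_{x\ge c't}u_+(t,x;u_0)=0\quad\hbox{for any }c'>c_+^*,
\]
by the same periodicity-shift interpolation used for \eqref{ctprop}, this time starting from \eqref{rightd1}. Then it argues by contradiction: if $\limsup h_+(t)/t>c_+^*$, one can find a sequence $\tau_n\to\infty$ with both $h_+'(\tau_n)\ge c_+^*+\delta$ and $h_+(\tau_n)\ge(c_+^*+\delta)\tau_n$. The Stefan condition gives $-\partial_xu_+(\tau_n,h_+(\tau_n))\ge(c_+^*+\delta)/\mu$, and the uniform $C^2$ bound on $u_+$ then forces $u_+(\tau_n,h_+(\tau_n)-C_1)\ge C_2>0$ for constants $C_1,C_2$ independent of $n$. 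Since $h_+(\tau_n)-C_1\ge(c_+^*+\delta/2)\tau_n$ for large $n$, this contradicts the decay statement above. This is close in spirit to the final sentence of your proposal, but is a pointwise contradiction at a single well-chosen sequence rather than an integrated estimate, and it requires no knowledge of $H_n^{c_1}$ whatsoever.
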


\begin{proof}
In what follows, for any $t\geq 0$, we extend the function $u_+(t,x;u_{0})$ to the whole real line $\R$ by defining $u_+(t,x;u_{0})=0$ for all $x>h_+(t;u_{0})$.
By a slight abuse of notation, we still use $u_+(t,x;u_{0})$ to denote the extended function. 

For each $n\geq 1$, we set 
$$h_n:=h_+(n\omega;u_0) \quad\hbox{and}\quad u_n(x):=u_+(n\omega,x;u_0)=U^n[u_0](x)\,\,\hbox{ for all } \,x\in\R.$$
Clearly
\begin{equation}\label{temporal}
u_+(t+n\omega,x;u_0)=u_+(t,x;u_n)\,\hbox{ and }\,   h_+(t+n\omega;u_0)=h_+(t;u_n)
\end{equation}
for all $n\in\N$, $x\in\R$ and $t\geq 0$.
Furthermore, by the assumption \eqref{assinitcon}, we find that, as $n\to\infty$,
 $h_n\to\infty$ and for any given $C\in\R$, $u_n(x)$ converges to $p(0,x)$ uniformly in $x\in (-\infty,C]$. 

Now we prove  \eqref{ctprop}. It suffices to show that, for any $c< c_+^*$ and any $C\in\R$,
\begin{equation}\label{rspeedcon}
\lim_{t\to\infty} \big|u_+\big(t,x+ct;u_0\big)-p(t,x+ct)\big|=0\, \hbox{ uniformly in }\,x\leq C . 
\end{equation}
Without loss of generality, we can assume that $u_0(x)$ is nonincreasing in $x\in (-\infty, h_0]$. Indeed, we could 
first prove \eqref{rspeedcon} with $u_0$ replaced by some nonincreasing $\tilde{u}_0\in \mathcal{H}_+(h_0)$ satisfying \eqref{assinit} and \eqref{assinitcon} such that $\tilde{u}_0\geq u_0$ in $(-\infty,h_0]$. Then since \eqref{assinitcon} implies
$U^{n_0}[u_0](x)\geq \tilde u_0(x)$ in $\R$ for some large integer $n_0$,   the comparison principle \cite[Proposition~2.14]{ddl} gives
\begin{equation*}
\left.\begin{array}{ll}
u_+(t,x+ct;\tilde{u}_0) &\leq u_+\big(t,x+ct;U^{n_0}[u_0]\big)\\
\smallskip
&=u_+(t+n_0\omega,x+ct;u_0)\\
\smallskip
&\leq p(t+n_0\omega,x+ct)\\
\smallskip
&= p(t,x+ct)   \,\,\hbox{ for all } t>0,\,x\in\R. 
\end{array}\right.
\end{equation*}
This together with \eqref{rspeedcon} holding for $\tilde{u}_0$ implies that, for any $C\in\R$, 
\begin{equation*}
\lim_{t\to\infty} \big|u_+\big(t+n_0\omega,x+ct;u_0\big)-p(t,x+ct)\big|=0\, \hbox{ uniformly in }\,x\leq C, 
\end{equation*}
that is, 
\begin{equation*}
\lim_{t\to\infty} \big|u_+\big(t,x+ct-cn_0\omega;u_0\big)-p(t,x+ct-cn_0\omega)\big|=0\, \hbox{ uniformly in }\,x\leq C. 
\end{equation*}
Thus, \eqref{rspeedcon} holds for the original $u_0$. 

With the assumption that $u_0$ nonincreasing in $x\in (-\infty, h_0]$, it follows from similar analysis as that used in Remark~\ref{scspread} that, to prove \eqref{rspeedcon}, it suffices to show that     
$$\lim_{t\to\infty} \big|u_+\big(t,x+ct;u_0\big)-p(t,x+ct)\big|=0\, \hbox{ locally uniformly in }\,x\in\R. $$
Thus, by \eqref{temporal}, to complete the proof of \eqref{rspeedcon}, we only need to show that for any $c< c_+^*$,
\begin{equation}\label{eqctprop}
\lim_{n\to\infty} \big|u_+(t,x+ct+cn\omega;u_n)-p(t,x+ct+cn\omega)\big|=0 
\end{equation}
locally uniformly in $x\in\R$, and uniformly in $t\in[0,\omega]$. 

Write $cn\omega=x_n+x_n'$ with $x_n\in[0,L]$ and $x_n'\in L\Z$. Due to the spatial $L$-periodicity assumption in \eqref{period} and the $L$-periodicity of the function $p(t,x)$ in $x$, 
\begin{equation*}
\left\{\begin{array}{rcl}
\displaystyle
\medskip
u_+(t,x+ct+cn\omega;u_n)&\!\!\!= \!\!\!&u_+(t,x+ct+x_n;u_n(\cdot+x_n')),\\
\displaystyle
\medskip
p(t,x+ct+cn\omega)&\!\!\! =\!\!\! &p(t,x+ct+x_n)
\end{array} \right. 
\end{equation*}
for any $n\in\N$, $t\in[0,\omega]$, $x\in\R$. Notice that $x_n'\leq c\omega n$ for all $n\in\N$. Since $c< c_+^*$,  and by \eqref{rightd2},
$\lim_{n\to\infty} \sup_{x\leq c_1\omega n} \big|u_n(x)-p(0,x)\big|=0$ for any $c_1\in (c, c_+^*)$, we easily see that
$$ \lim_{n\to\infty} \big|u_n(x+x_n')-p(0,x)\big|=0 \hbox{ locally uniformly in }\, x\in\R. $$
Furthermore, by  \eqref{liminfc}, we have $h(n\omega)-c\omega n\to\infty$ as $n\to\infty$, which clearly implies $h(n\omega)-x_n'\to\infty$
as $n\to\infty$. It then follows from the continuous dependence property stated in \cite[Proposition~2.13]{ddl} that 
$$\lim_{n\to\infty} \big|u_+\big(t,x;u_n(\cdot+x_n')\big) - p(t,x) \big|=0$$
  locally uniformly in $  x\in\R$  and uniformly in $ t\in[0,\omega]$,
which implies \eqref{eqctprop}. The proof of \eqref{ctprop} is thus complete.

Next we prove \eqref{cthprop}. To this end, we first prove the following conclusion:
\begin{equation}\label{ctprop1}
\lim_{t\to\infty} \sup_{x\geq c't}  u_+(t,x;u_0)=0\quad \hbox{for any } \,c'> c_+^*.
\end{equation}

As above, we can assume without loss of generality that $u_0(x)$ is nonincreasing in $x\in (-\infty, h_0]$. For any given $c'> c_+^*$, write $c'n\omega=y_n+y_n'$ with $y_n\in[0,L]$ and $y_n'\in L\Z$. As in the analysis leading to \eqref{eqctprop}, to prove \eqref{ctprop1}, it is sufficient to show that 
\begin{equation}\label{eqctprop1}
\lim_{n\to\infty} u_+(t,x;u_n(\cdot+y_n'))=0 
\end{equation}
 locally uniformly in $ x\in\R$ and uniformly in $t\in[0,\omega]$.

Since $c'> c_+^*$, it follows from \eqref{rightd1} that 
\[
\mbox{
 $ \lim_{n\to\infty} u_n(x+y_n')=0$  locally uniformly in $x\in\R$.}
\]
Denote 
\[
\tilde u_n(x):=u_n(x+y_n')\quad\hbox{and}\quad \tilde h_n=h_n-y_n'.
\]
It is easily seen that  $\tilde{u}_n\in\mathcal{H}_+(\tilde{h}_n)$, and
$\lim_{n\to\infty} \tilde{u}_n(x)=0$ locally uniformly in $x\in\R$.
By the comparison principle we have
\[
0\leq u_+\big(t,x;\tilde{u}_n\big)\leq  v(t,x; \tilde u_n) \,\hbox{ for } t>0,\,x\in\R,
\]
where $v(t,x;\tilde u_n)$ is the solution of the corresponding Cauchy problem, which converges to 0 as $n\to\infty$ uniformly for $(t,x)$ over any bounded set of $[0,+\infty)\times\R$. It then follows that
$$\lim_{n\to\infty} u_+\big(t,x;\tilde{u}_n\big)=0  \hbox{ locally uniformly in } x\in\R \hbox{ and uniformly in } t\in[0,\omega],$$
that is, \eqref{eqctprop1} holds. The proof of \eqref{ctprop1} is finished.

We are now ready to give the proof for \eqref{cthprop}. We first show that 
\begin{equation}\label{liminf}
\liminf_{t\to\infty}  \frac{h_+(t;u_0)}{t} \geq c_+^*.
\end{equation}
 Assume by contradiction that $\liminf_{t\to\infty}  h_+(t;u_0)/t < c_+^*$. Then there would exist some real number $\delta_1>0$ and a sequence $\{t_n\}_{n\in\N}\subset \R$ such that $t_n\to\infty$ as $n\to\infty$, and that $h_+(t_n;u_0)\leq  (c_+^*-\delta_1)t_n$ for all $n\in\N$. It follows from \eqref{ctprop} that 
\begin{equation*}
\liminf_{n\to\infty} \big|u_+\big(t_n,h_+(t_n);u_0\big)-p\big(t_n,h_+(t_n)\big)\big|=0,
\end{equation*}
which is in contradiction with the fact that $u_+\big(t,h_+(t);u_0\big)\equiv 0$. Therefore \eqref{liminf} holds. 

To complete the proof, it remains to show
\begin{equation}\label{limsup}
\limsup_{t\to\infty} \frac{ h_+(t;u_0)}{t} \leq c_+^*.
\end{equation}
Suppose, to the contrary, that $\limsup_{t\to\infty} h_+(t;u_0)/t > c_+^*$. Then we can find 
a real number $\delta_2>0$ such that $\limsup_{t\to\infty} h_+(t;u_0)/t \geq  c_+^*+\delta_2$. Thus, there would exist 
a sequence $\{\tau_n\}_{n\in\N}\subset \R$ such that $\tau_n\to\infty$ as $n\to\infty$, and that 
\begin{equation}\label{contradi}
h_+'(\tau_n;u_0)\geq c_+^*+\frac{\delta_2}{4}, \quad \quad h_+(\tau_n;u_0)\geq  \Big(c_+^*+\frac{\delta_2}{4}\Big)\tau_n\,\,\hbox{ for all }\,n\in\N
\end{equation}
Since $h'_+(t;u_0)=-\mu\partial_xu_+\big(t,h_+(t);u_0\big)$ for all $t\in\R$, it follows from the first inequality of \eqref{contradi} that 
$$ \partial_xu_+\big(\tau_n,h_+(\tau_n);u_0\big) \leq -\frac{c_+^*+\delta_2/4}{\mu}\,\,\hbox{ for all }\,n\in\N.$$
Furthermore, by standard parabolic theory, the function $\partial_{xx}u_+\big(\tau_n,x;u_0\big)$ is  bounded in $(-\infty, h_+(\tau_n;u_0)]$ uniformly in $n$. This implies that there exist two positive constants $C_1$ and $C_2$ independent of $n$ such that 
$$u_+\big(\tau_n,h_+(\tau_n)-C_1;u_0\big) \geq C_2\quad\hbox{for all }\,n\in\N. $$
However, due to the second inequality of \eqref{contradi}, it follows from \eqref{ctprop1} that 
$$\limsup_{n\to\infty}u_+\big(\tau_n,h_+(\tau_n)-C_1;u_0\big) =0, $$
which is a contradiction. Therefore, \eqref{limsup} holds, and  \eqref{cthprop} is proved. 
\end{proof}

Theorem~\ref{therspeed} indicates that $c_+^*$ is the spreading speed for the problem \eqref{eqfunbd} in the rightward direction. In a similar way, we can consider the existence of spreading speed in the leftward direction for the problem  \eqref{eqfunbdn}. As a matter of fact, if we define 
\[
\tilde f(t,x,u)=f(t,-x,u),
\]
 then \eqref{eqfunbdn} reduces to a problem of the form \eqref{eqfunbd} with reaction function $\tilde f$. Therefore a parallel theory holds.

For clarity, we state the corresponding results precisely below. We define
\begin{equation*}
Q_-[{\phi}](\xi,y):=U[{\phi}(\cdot+\xi-y,\cdot)](y)\,\,\hbox{ for }\, {\phi}\in \mathcal{\tilde{M}},
\end{equation*}
where 
\[
\mathcal{\tilde{M}}:=\Big\{\phi\in C(\R^2): \tilde\phi(\xi,x):=\phi(-\xi,-x) \mbox{ belongs to } \mathcal{M}\Big\}.
\]
Clearly, for any fixed $\xi_0\in\R$, $U[\phi(\xi_0+\cdot,\cdot)](y)$ is well-defined, and
\[
U[\phi(\xi_0+\cdot,\cdot)](y):=\left\{\baa{ll}
u_-(\omega,y; \phi(\xi_0+\cdot,\cdot)),& \hbox{ if }\, y\geq g_-(\omega; \phi(\xi_0+\cdot,\cdot)),\vspace{3pt}\\
0,&\hbox{ if }\, y< g_-(\omega; \phi(\xi_0+\cdot,\cdot)).\eaa\right.
\]

Now, for any given ${\phi}\in\mathcal{\tilde{M}}$ and $c\in\R$, we define the sequence $\big\{(b_n^c,G_n^c)\big\}_{n\in\N}$ by the following recursions
\begin{equation*}
b_{n}^c(\xi,x)=\max\Big\{{\phi}(\xi,x), \,Q_-[b_{n-1}^c](\xi-c,x) \Big\} \; \mbox{ with \; $b_{0}^c(\xi,x)={\phi}(\xi,x)$,}
\end{equation*}
and 
\begin{equation*}
G_{n}^c(\xi)=\max\Big\{G_0(\xi),\,g_-\big(\omega;b_{n-1}^c(\cdot+\xi-c,\cdot)\big) \Big\},
\end{equation*}
where   $G_0(\xi)$ is the real number such that ${\phi}(\xi+x,x)=0$ if and only if $\xi\leq G_0(\xi)$. 

\begin{lem} The limits $G^c(\xi)=\lim_{n\to\infty}G_{n}^c(\xi)$ and $b^c(\xi,x)=\lim_{n\to\infty}b_n^c(\xi,x)$ exist. Moreover, either $b^c(-\infty,x)\equiv p(0,x)$  or $b^c(-\infty,x)\equiv 0$. 
\end{lem}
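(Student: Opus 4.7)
The plan is to deduce this lemma as an immediate consequence of Lemmas~\ref{infian} and~\ref{acinfty} via the leftward--rightward symmetry $x\mapsto -x$ introduced just before the lemma statement, thereby avoiding having to redo the arguments of Section~\ref{sec3} from scratch.

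Concretely, set $\tilde f(t,x,u):=f(t,-x,u)$ and observe that $\tilde f$ inherits \eqref{zero}, \eqref{hyp2}, \eqref{period} and hypothesis (H), with unique positive $(\omega,L)$-periodic solution of the corresponding version of \eqref{psteady11} given by $\tilde p(t,x):=p(t,-x)$. Denote by $\tilde U:\mathcal{C}\to\mathcal{C}$ the Poincar\'{e} operator defined exactly as $U$ but with $f$ replaced by $\tilde f$, and by $\tilde Q_+$, $\{\tilde a_n^c\}$, $\{\tilde H_n^c\}$ the corresponding objects produced by \eqref{defirq}, \eqref{recuran}, \eqref{recurhn} starting from the reflected initial datum $\tilde\phi(\xi,x):=\phi(-\xi,-x)$, which belongs to $\mathcal{M}$ by the very definition of $\tilde{\mathcal{M}}$. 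Uniqueness of solutions in each of the four problems defining $U$ yields the reflection identity
\[
\tilde U\bigl[\varphi(-\cdot)\bigr](x) \;=\; U[\varphi](-x) \qquad \mbox{for every } \varphi\in\mathcal{C},
\]
under which the left and right supporting points are swapped and the free boundaries $g_-$ and $h_+$ exchange roles through $g_-(\omega;\varphi)=-\tilde h_+\bigl(\omega;\varphi(-\cdot)\bigr)$.

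A straightforward induction on $n$, combining this reflection identity with the definitions of $Q_-$ and $\tilde Q_+$, then gives the conjugation
\[
b_n^c(\xi,x) \;=\; \tilde a_n^c(-\xi,-x), \qquad G_n^c(\xi) \;=\; -\tilde H_n^c(-\xi), \qquad n\in\N.
\]
Granted this, Lemma~\ref{infian}(i) applied to $\tilde a_n^c$ yields the existence of $\tilde a^c(\xi,x):=\lim_{n\to\infty}\tilde a_n^c(\xi,x)$ (locally uniformly in $x$ for each fixed $\xi$) and of $\tilde H^c(\xi):=\lim_{n\to\infty}\tilde H_n^c(\xi)\in [\tilde H_0(\xi),+\infty]$, from which the existence of $b^c(\xi,x)=\tilde a^c(-\xi,-x)$ and $G^c(\xi)=-\tilde H^c(-\xi)$ follows at once.

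Finally, Lemma~\ref{acinfty} applied to $\tilde a^c$ delivers the dichotomy $\tilde a^c(+\infty,x)\equiv 0$ or $\tilde a^c(+\infty,x)\equiv \tilde p(0,x)$. Since $\tilde p(0,-x)=p(0,x)$, this translates via $b^c(-\infty,x)=\tilde a^c(+\infty,-x)$ into the required conclusion $b^c(-\infty,x)\equiv 0$ or $b^c(-\infty,x)\equiv p(0,x)$. The only step that requires genuine verification is the reflection identity for $\tilde U$ in each of the four cases in the definition of $U$, and the resulting inductive identity $b_n^c(\xi,x)=\tilde a_n^c(-\xi,-x)$; everything else is then a direct translation of results already established in Section~\ref{sec3}.
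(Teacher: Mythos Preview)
Your proposal is correct and follows exactly the approach the paper itself indicates: the paper does not give a separate proof of this lemma but states just before it that ``if we define $\tilde f(t,x,u)=f(t,-x,u)$, then \eqref{eqfunbdn} reduces to a problem of the form \eqref{eqfunbd} with reaction function $\tilde f$'' and that ``a parallel theory holds.'' Your reflection identity $\tilde U[\varphi(-\cdot)](x)=U[\varphi](-x)$ and the resulting conjugation $b_n^c(\xi,x)=\tilde a_n^c(-\xi,-x)$ make this reduction explicit, so the lemma becomes a direct consequence of Lemmas~\ref{infian} and~\ref{acinfty} applied to the tilded objects.
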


Let $c_-$ be defined by
\begin{equation}\label{deficl}
c_-=\sup\Big\{c\in\R:\, b^c(-\infty,x)\equiv p(0,x)\Big\}.
\end{equation}
Then we have the following two results.

\begin{pro}\label{disspeedl}
Let $c_-$ be given in \eqref{deficl}. Then $c_-\in (0,\infty)$ and is independent of the choice of ${\phi}\in\mathcal{\tilde{M}}$ in the definition of the recursion. Suppose $u_0\in\mathcal{C}$ has left supporting point $g_0>-\infty$,
 right supporting point $h_0=\infty$ and
\begin{equation}\label{assinit1}
\liminf_{x\to\infty}\big(p(0,x)-u_0(x)\big)>0.
\end{equation}
If for every $C\in\R$, 
\begin{equation}\label{assinitcon1}
\lim_{n\to\infty} \big|U^n[u_0](x)-p(0,x)\big|=0\,\hbox{ uniformly in }\,x\in [C,\infty),
\end{equation}
then 
\begin{equation*}
\lim_{n\to\infty} \sup_{x\leq -c_1n} U^n[u_0](x)=0\quad \hbox{for any } \,c_1> c_-,
\end{equation*}
and 
\begin{equation*}
\lim_{n\to\infty} \sup_{x\geq -c_2n} \big|U^n[u_0](x)-p(0,x)\big|=0\quad \hbox{for any } \,c_2< c_-.
\end{equation*}
\end{pro}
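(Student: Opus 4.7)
The entire plan is to reduce the leftward statement to the rightward statement via the reflection $x \mapsto -x$, so that all the work already done between \eqref{M} and Theorem~\ref{therspeed} can be imported verbatim, rather than repeated line by line.

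\textbf{Step 1 (reflection and preservation of hypotheses).}
I would introduce $\tilde f(t,x,u):=f(t,-x,u)$ and check that $\tilde f$ still satisfies \eqref{zero}, \eqref{hyp2}, \eqref{period} (note that $L$-periodicity in $x$ is preserved under $x\mapsto -x$) and Assumption~(H), with $\tilde p(t,x):=p(t,-x)$ as the unique positive $\omega$-periodic, $L$-periodic solution of the corresponding equation.
If $(u_-,g_-)$ solves \eqref{eqfunbdn} with datum $u_0\in\mathcal{H}_-(g_0)$, then $\tilde u(t,x):=u_-(t,-x)$ and $\tilde h(t):=-g_-(t)$ solve a problem of the form \eqref{eqfunbd} for $\tilde f$ with datum $\tilde u_0(x):=u_0(-x)\in\mathcal{H}_+(-g_0)$, since $g_-'(t)=-\mu(u_-)_x(t,g_-(t))$ translates into $\tilde h'(t)=-\mu\tilde u_x(t,\tilde h(t))$.

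\textbf{Step 2 (identifying the recursion).}
I would verify the bijection $\tilde\phi(\xi,x):=\phi(-\xi,-x)$ between $\tilde{\mathcal{M}}$ and the set $\mathcal{M}$ built from $\tilde p$, and show, using Step~1, that
\[
Q_-[\phi](\xi,y)=\tilde Q_+[\tilde\phi](-\xi,-y),
\]
where $\tilde Q_+$ is the operator \eqref{defirq} associated with $\tilde f$.
By induction this gives $b_n^c(\xi,x)=\tilde a_n^c(-\xi,-x)$ and $G_n^c(\xi)=-\tilde H_n^c(-\xi)$, so Lemmas~\ref{propan}--\ref{comkey} applied to $\tilde f$ yield the existence of the limits $b^c=\lim_n b_n^c$ and $G^c=\lim_n G_n^c$, together with the dichotomy $b^c(-\infty,x)\equiv p(0,x)$ or $b^c(-\infty,x)\equiv 0$.
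In particular, \eqref{deficl} satisfies $c_-=\tilde c_+$, where $\tilde c_+$ is the constant \eqref{defic} for $\tilde f$.

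\textbf{Step 3 (finiteness, positivity, independence).}
From Propositions~\ref{disspeed} and \ref{r-speed-finite} applied to $\tilde f$, $\tilde c_+\in(-\infty,\infty)$ and is independent of the choice of $\tilde\phi$; hence $c_-\in\R$ is independent of $\phi\in\tilde{\mathcal{M}}$.
To upgrade $c_-\in\R$ to $c_-\in(0,\infty)$ I would invoke Proposition~\ref{r-speed} (the part that produces $\tilde c_+>0$) on the reflected datum $\tilde u_0(x):=u_0(-x)$. The assumptions \eqref{assinit1}--\eqref{assinitcon1} on $u_0$ translate exactly into \eqref{assinit}--\eqref{assinitcon} for $\tilde u_0$ with respect to $\tilde f$, so this step is automatic.

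\textbf{Step 4 (translating the two spreading limits).}
Let $\tilde U$ denote the Poincar\'e operator associated with $\tilde f$. A direct computation shows
\[
U^n[u_0](x)=\tilde U^n[\tilde u_0](-x)\quad\text{for all }x\in\R,\ n\in\N,
\]
so \eqref{rightd1}--\eqref{rightd2} applied to $\tilde u_0$ and $\tilde c_+=c_-$ give, after the substitution $x\mapsto -x$,
\[
\lim_{n\to\infty}\sup_{x\le -c_1 n}U^n[u_0](x)=0\ \text{ for }c_1>c_-,\quad \lim_{n\to\infty}\sup_{x\ge -c_2 n}\bigl|U^n[u_0](x)-p(0,x)\bigr|=0\ \text{ for }c_2<c_-,
\]
which are the two displayed assertions of the proposition. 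Here I use that $\tilde p(0,-x)=p(0,x)$.

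\textbf{Expected obstacle.}
The proof is almost entirely symbolic bookkeeping, so the only real point requiring attention is Step~1: checking that the Stefan condition and the regularity class of $f$ are genuinely invariant under $x\mapsto -x$, and in particular that Assumption~(H), stated in terms of $v(t+s,x;v_0)\to p(t+s,x)$ uniformly in $(t,x)$, transfers to $\tilde f$. Since \eqref{monoconver} is uniform in $x\in\R$, this transfer is immediate; no new analytic input is needed, and the remainder is a mechanical reinterpretation of the already-established rightward theory.
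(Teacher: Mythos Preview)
Your proposal is correct and follows essentially the same approach as the paper: the paper does not give a standalone proof of Proposition~\ref{disspeedl} but instead observes, just before stating the leftward results, that defining $\tilde f(t,x,u)=f(t,-x,u)$ reduces \eqref{eqfunbdn} to a problem of the form \eqref{eqfunbd}, so that ``a parallel theory holds.'' Your Steps~1--4 simply spell out this reflection argument in detail, including the identification $c_-=\tilde c_+$ and the transfer of \eqref{assinit1}--\eqref{assinitcon1} to \eqref{assinit}--\eqref{assinitcon}, which is exactly what the paper intends.
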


\begin{theo}\label{therspeedr} 
Let $c_-^*=c_-/\omega$ where $c_-\in (0,\infty)$ is given in \eqref{deficl}. If there exsits $u_0\in\mathcal{H}_-(g_0)$ such that $u_0(x)\leq p(0,x)$ for all $x\in [g_0,\infty)$ and $u_0$ satisfies the assumptions \eqref{assinit1} and \eqref{assinitcon1}, then
\begin{equation*}
\lim_{t\to\infty} \sup_{x\geq -ct} \big|u_-(t,x;u_0)-p(t,x)\big|=0\quad \hbox{for any } \,c< c_-^*,
\end{equation*}
and
\[
\lim_{t\to\infty} \frac{g_-(t;u_0)}{t}=c_-^*.
\]
\end{theo}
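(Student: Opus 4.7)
The strategy is the reduction suggested by the authors just before the statement: I would deduce Theorem~\ref{therspeedr} from Theorem~\ref{therspeed} applied to the reflected reaction
\[
\tilde f(t,y,u):=f(t,-y,u),
\]
which manifestly inherits \eqref{zero}, \eqref{hyp2}, \eqref{period} and Hypothesis (H), and for which the unique periodic solution of the analogue of \eqref{psteady11} is $\tilde p(t,y)=p(t,-y)$. A direct computation shows that if $(u_-(t,x;u_0),g_-(t;u_0))$ solves \eqref{eqfunbdn} with datum $u_0\in\mathcal{H}_-(g_0)$, then
\[
\tilde u(t,y):=u_-(t,-y;u_0),\qquad \tilde h(t):=-g_-(t;u_0)
\]
solves \eqref{eqfunbd} with reaction $\tilde f$ and datum $\tilde u_0(y):=u_0(-y)\in\mathcal{H}_+(-g_0)$.

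First I would check that the hypotheses of Theorem~\ref{therspeed} on $\tilde u_0$ follow from those imposed on $u_0$. The pointwise bound $\tilde u_0(y)\leq\tilde p(0,y)$ is immediate, and the identity
\[
\tilde U^{n}[\tilde u_0](y)=U^{n}[u_0](-y)\qquad (n\in\N,\, y\in\R),
\]
where $\tilde U$ is the Section~\ref{sec2} Poincar\'e-type operator built from $\tilde f$, converts \eqref{assinit1} and \eqref{assinitcon1} for $u_0$ into \eqref{assinit} and \eqref{assinitcon} for $\tilde u_0$.

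Next I would identify the two spreading constants. The involution $\phi(\xi,x)\mapsto \phi(-\xi,-x)$ is a bijection between $\tilde{\mathcal M}$ (associated to $f$) and the analogue of $\mathcal M$ associated to $\tilde f$, under which $Q_-$ intertwines with the $Q_+$ operator for $\tilde f$ and the recursion defining $b_n^c$ with that defining the corresponding $a_n^c$. Consequently $b^c(-\infty,x)\equiv p(0,x)$ if and only if the analogous limit for $\tilde f$ equals $\tilde p(0,-x)$, so the supremum \eqref{deficl} coincides with the analogue of \eqref{defic} for $\tilde f$. Thus $c_-^*$ coincides with the rightward speed associated with $\tilde f$, and by Lemma~\ref{independc} this identification is independent of the choice of seed function.

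Finally, Theorem~\ref{therspeed} applied to $(\tilde u,\tilde h)$ yields
\[
\lim_{t\to\infty}\sup_{y\leq ct}\bigl|\tilde u(t,y)-\tilde p(t,y)\bigr|=0\ \text{for all }c<c_-^*,\qquad \lim_{t\to\infty}\frac{\tilde h(t)}{t}=c_-^*.
\]
Substituting back via $y=-x$, the first limit becomes the asserted spreading statement for $u_-$ on the half-line $\{x\geq -ct\}$, and the second yields the claimed limit for $g_-(t;u_0)/t$. The main obstacle is purely bookkeeping: one must verify that every step of the Section~\ref{sec3} construction, from the properties \eqref{M} defining $\mathcal M$ through Lemmas~\ref{wellde}--\ref{comkey} and Propositions~\ref{disspeed}--\ref{r-speed-finite}, respects the spatial reflection. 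This is routine given the symmetric way those statements were formulated, so no new idea is required.
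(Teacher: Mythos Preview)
Your proposal is correct and follows exactly the route the paper indicates: the paper does not give a separate proof of Theorem~\ref{therspeedr} but explicitly remarks (just before the definition of $Q_-$) that setting $\tilde f(t,x,u)=f(t,-x,u)$ reduces \eqref{eqfunbdn} to a problem of the form \eqref{eqfunbd}, so that ``a parallel theory holds''. Your careful verification that the reflection carries the hypotheses \eqref{assinit1}--\eqref{assinitcon1} to \eqref{assinit}--\eqref{assinitcon} and intertwines $Q_-$ on $\tilde{\mathcal M}$ with the $Q_+$-recursion for $\tilde f$ (hence identifying $c_-$ with the $c_+$ of $\tilde f$) is precisely the bookkeeping the paper leaves implicit.
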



\section{Proof of Theorem~\ref{spreadspeed}}\label{sec4}
In this section, we will complete the proof of Theorem~\ref{spreadspeed} by showing that the rightward and leftward spreading speeds of \eqref{eqf} are the same as the spreading speeds determined by \eqref{eqfunbd} and \eqref{eqfunbdn}, respectively. With the preparations in the previous section, we are now able to adapt the approximation technique of Weinberger \cite{w1} to our situation here, similar in spirit to  \cite[Theorem 3.3]{lz2} where the  Cauchy problem was considered.

Let $\eta:\R\to\R$ be a smooth, nonincreasing function such that $\eta(s)>0$ for $s<1$, and
\begin{equation*}
\eta(s)=\left\{\baa{ll}
1,& \hbox{ if }\, s\leq \frac{1}{2},\vspace{3pt}\\
0,&\hbox{ if }\, s\geq 1.\eaa\right.
\end{equation*} 
For any real number $B>0$, we define the map $U_B$ on $\mathcal{C}$ by 
$$U_B[\varphi](x):=U\Big[\eta\big(\frac{|\cdot-x|}{B}\big)\varphi(\cdot)\Big](x)\,\,\,\hbox{ for all }\, \varphi\in \mathcal{C},  $$
where $U:\mathcal{C}\to\mathcal{C}$ is the operator defined in Section~\ref{sec2}.

Thus, for any given $\varphi\in\mathcal{C}$ with left supporting point $g_0$ and right supporting point $h_0$,
\begin{itemize}
\item
 if $x\not\in(g_0-B, h_0+B)$, then $\eta(|\cdot-x|/B)\varphi(\cdot)\equiv 0$ and hence 
$U_B[\varphi](x)=0$; 
\item
if $x\in (g_0-B,  h_0+B)$, then  $U_B[\varphi](x)$ is 
equal to
$u_{B,x}(\omega,x)$, where $u=u_{B,x}(t,y) $, and its left supporting point $g=g_{B,x}(t)$, right supporting point $h=h_{B,x}(t)$ form a triplet $(u_{B,x},g_{B,x},h_{B,x})$ that solves the following free boundary problem \footnote{As before, for any $t>0$, $u_{B,x}(t,y)$ is extended to $y\in\R$ by defining $u_{B,x}(t,y)=0$ for $y> h_{B,x}(t)$ or $y< g_{B,x}(t)$.} 
\begin{equation}\label{eqtruct}\left\{\baa{l}
u_t(t,y)=d u_{yy}(t,y) + f(t,y,u), \quad g(t)<y<h(t),\quad t>0,\vspace{3pt}\\
u(t,g(t))=u(t,h(t))=0,\quad t>0,\vspace{3pt}\\
g'(t)=-\mu  u_y(t, g(t)), \quad
h'(t)=-\mu  u_y(t,h(t)),\quad t>0, \vspace{3pt}\\
g(0)=\max\{g_0,x-B\},\quad h(0)=\min\{h_0,x+B\},\vspace{3pt}\\
u(0,y)=\eta\big(\frac{|y-x|}{B}\big)\varphi(y), \quad   g(0)\leq y\leq h(0).\eaa\right.
\end{equation} 
\end{itemize}

\begin{lem}\label{trunctedp}
 The operator $U_B$ possesses the following properties. 
\begin{itemize}
\item[(i)]
For any $\varphi\in \mathcal{C}$ and $x\in\R$, $U_B[\varphi](x)$ only depends on the values of $\varphi(y)$ for $y\in [x-B,x+B]$.
\item[(ii)] For any $\varphi\in \mathcal{C}$, $U_B[\varphi](x)$ is nondecreasing in $B$ and converges to $U[\varphi](x)$ locally uniformly in $x\in\R$ as $B\to\infty$.
\item[(iii)]  $U_B$ maps $\mathcal{C}$ into itself, and $U_B$ has the properties stated in {\rm (A1)-(A3)}. 
\end{itemize} 
\end{lem}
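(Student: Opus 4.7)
The plan is to prove (i), (ii), (iii) in turn, leaning on the comparison principle, the parabolic maximum principle, and the continuous-dependence results for \eqref{eqf} and its single-free-boundary and Cauchy counterparts established in Part 1~\cite{ddl}.

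For (i), note that $\eta(s) = 0$ for $s \geq 1$, so the cut-off $\eta(|\cdot-x|/B)\varphi(\cdot)$ agrees on $\R$ with $\eta(|\cdot-x|/B)\tilde{\varphi}(\cdot)$ whenever $\varphi \equiv \tilde{\varphi}$ on $[x-B,\, x+B]$. Hence both the initial data and the supporting interval in \eqref{eqtruct} depend only on $\varphi|_{[x-B,x+B]}$, and so does $u_{B,x}(\omega, x) = U_B[\varphi](x)$. For the monotonicity in (ii), if $B_1 \leq B_2$ then $|y-x|/B_1 \geq |y-x|/B_2$, so by monotonicity of $\eta$ we have $\eta(|y-x|/B_1) \leq \eta(|y-x|/B_2)$ and the $B_1$-cut-off is pointwise dominated by the $B_2$-cut-off with nested supporting intervals. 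The comparison principle from Part 1 then yields $u_{B_1,x}(\omega, y) \leq u_{B_2,x}(\omega, y)$ on their common domain, whence $U_{B_1}[\varphi](x) \leq U_{B_2}[\varphi](x)$. For the convergence $U_B[\varphi](x) \to U[\varphi](x)$ as $B \to \infty$, I would fix a compact $K \subset \R$, observe that the cut-off initial data converges to $\varphi$ locally uniformly in $y$ (uniformly in $x \in K$) and that its supporting interval $[\max\{g_0, x-B\},\, \min\{h_0, x+B\}]$ converges to $[g_0, h_0]$ in the natural sense, and then invoke the continuous dependence established in Part 1.

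For (iii), I would first check $U_B[\varphi] \in \mathcal{C}$. Nonnegativity follows from the parabolic maximum principle; the bound $U_B[\varphi](x) \leq p(0,x)$ follows by comparing the solution of \eqref{eqtruct} with $p$, since the cut-off initial data is dominated by $\varphi(\cdot) \leq p(0,\cdot)$ and $p$ is the unique periodic positive solution; continuity of $U_B[\varphi]$ in $x$ follows from continuous dependence of \eqref{eqtruct} on both the initial function and the shift parameter $x$; and the set where $U_B[\varphi] > 0$ is the open interval $(g_0 - B,\, h_0 + B)\cap\R$ by inspecting when the cut-off is nontrivial and applying the strong maximum principle together with the strict monotonicity of the free boundaries. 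Property (A1) is immediate: if $\varphi_1 \geq \varphi_2$ on $\R$, then the corresponding cut-offs inherit the same ordering, and the comparison principle for \eqref{eqtruct} gives $U_B[\varphi_1](x) \geq U_B[\varphi_2](x)$. For (A2), with $y \in L\Z$, the identity $\eta(|(z+y)-(x+y)|/B) = \eta(|z-x|/B)$ together with the $L$-periodicity of $f$ in space shows that the problem defining $U_B[T_y\varphi](x+y)$ is obtained from the one defining $U_B[\varphi](x)$ by translating $z \mapsto z - y$, so the two values coincide. For (A3), the map $(\varphi, x) \mapsto \eta(|\cdot-x|/B)\varphi(\cdot)$ is continuous with respect to local uniform convergence on $\mathcal{C} \times \R$ and sends convergent sequences of supporting points to convergent ones, so continuity of $U_B$ follows directly from continuity of $U$ as already recorded in (A3).

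The main technical obstacle is the convergence $U_B[\varphi] \to U[\varphi]$ in part (ii) when $\varphi$ has unbounded support: in that case \eqref{eqtruct} is a genuine two-free-boundary problem on a bounded interval for every $B$, while the limit $U[\varphi]$ is governed by a one-free-boundary problem (if exactly one of $g_0,\, h_0$ is infinite) or by the Cauchy problem (if both are). Justifying the passage through this change of problem type demands a stability result in which one or both free boundaries are allowed to escape to $\pm\infty$; the relevant approximation statement is available from the continuous-dependence and stability theory of Part 1, but it must be applied carefully so that the convergence is uniform for $x$ in compact sets of $\R$. All remaining verifications are routine applications of the framework of Part 1.
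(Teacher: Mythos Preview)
Your treatment of (i), (ii), (A1), (A2), (A3) is essentially the same as the paper's and is fine.  The gap is in part (iii), specifically your description of the positivity set of $U_B[\varphi]$.

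You claim that $U_B[\varphi](x)>0$ exactly on $(g_0-B,\,h_0+B)\cap\R$, justified by ``inspecting when the cut-off is nontrivial and applying the strong maximum principle together with the strict monotonicity of the free boundaries.''  This is incorrect.  Take for instance $g_0=-\infty$, $h_0<\infty$, and $x$ slightly below $h_0+B$.  The cut-off initial datum $\eta(|\cdot-x|/B)\varphi(\cdot)$ is supported in $[x-B,\,h_0]$, so $h_{B,x}(0)=h_0$.  Strict monotonicity gives only $h_{B,x}(\omega)>h_0$; there is no reason the right free boundary must travel the distance $x-h_0$ (which can be close to $B$) in time $\omega$.  If it does not, then $x>h_{B,x}(\omega)$ and $U_B[\varphi](x)=u_{B,x}(\omega,x)=0$.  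So in general the right supporting point $h_1$ of $U_B[\varphi]$ satisfies $h_0<h_1\leq h_0+B$ but need not equal $h_0+B$.

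What you are missing, and what the paper supplies, is a separate argument that the positivity set is an \emph{interval}.  The paper defines $h_1:=\sup\{x:U_B[\varphi](x)>0\}$, shows $h_0<h_1\leq h_0+B$, and then proves $U_B[\varphi](x_0)>0$ for every $h_0<x_0<h_1$ by picking $x_1\in(x_0,h_1)$ with $U_B[\varphi](x_1)>0$ and comparing the two problems \eqref{eqtruct}: since $x_0<x_1$ and $\varphi$ is supported to the left of $h_0<x_0$, one has $|x_0-y|\leq|x_1-y|$ for $y\leq h_0$, hence $\eta(|x_0-y|/B)\varphi(y)\geq\eta(|x_1-y|/B)\varphi(y)$, and the comparison principle gives $u_{B,x_0}(\omega,\cdot)\geq u_{B,x_1}(\omega,\cdot)$ on the domain of the latter.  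In particular $u_{B,x_0}(\omega,x_0)\geq u_{B,x_1}(\omega,x_0)>0$.  This step is not automatic and your argument does not cover it.
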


\begin{proof}
(i) This statement follows directly from the definition of $U_B$. 

(ii) Let $\varphi$ be a given function in $\mathcal{C}$ with left supporting point $g_0$ and right supporting point $h_0$.
We first prove the monotonicity of $U_B[\varphi](x)$ in $B$.  It suffices to show that for any 
$B_2\geq B_1>0$ and any $x\in (g_0-B_1, h_0+B_1)$, there holds $U_{B_1}[\varphi](x)\leq U_{B_2}[\varphi](x)$. Indeed, for any such $x$, 
$$\max\{g_0,x-B_2\}\leq  \max\{g_0,x-B_1\},\,\,  \, \min\{h_0,x+B_2\}\geq  \min\{h_0,x+B_1\},$$
and  due to the monotonicity of $\eta(s)$ in $s\in\R^+$,
$$\eta\Big(\frac{|y-x|}{B_1}\Big)\varphi(y)\leq \eta\Big(\frac{|y-x|}{B_2}\Big)\varphi(y)\,\hbox{ for  all }\,    y\in\R.$$  
Applying the comparison principle \cite[Proposition~2.10]{ddl} to \eqref{eqtruct}, we obtain
\[
g_{B_1,x}(t)\geq g_{B_2,x}(t),\; h_{B_1,x}(t)\geq h_{B_2,x}(t) \mbox{ for } t>0,
\]
and
\[
 u_{B_1,x}(t,y)\leq u_{B_2,x}(t,y)\; \mbox{ for } t>0,\; y\in[g_{B_1,x}(t), h_{B_1,x}(t)].
\]
In particular,
 $$u_{B_1,x}(\omega,y) \leq u_{B_2,x}(\omega,y) \, \hbox{ for }\,  g_{B_1,x}(\omega) \leq y\leq h_{B_1,x}(\omega).$$
 This clearly implies that $U_{B_1}[\varphi](x)\leq U_{B_2}[\varphi](x)$. 

We now show the convergence of  $U_B[\varphi]$ as $B\to\infty$.  For any given bounded subset $S\subset \R$, when $B$ is sufficiently large, clearly $S\subset (g_0-B, h_0+B)$.  Moreover, as $B\to\infty$,
$$\max\{g_0,x-B\}\to g_0,\,\,\,   \min\{h_0,x+B\}\to h_0  \,\; \hbox{ uniformly in }\,x\in S, $$
 and 
 $$\eta\Big(\frac{|y-x|}{B}\Big)\varphi(y)\to\varphi(y)  \,\; \hbox{ locally uniformly in }\,y\in\R\,\hbox{ and uniformly in }\,x\in S. $$
It then follows from the continuity of the operator $U$ in (A3) that 
$$U\Big[\eta\big(\frac{|\cdot-x|}{B}\big)\varphi(\cdot)\Big](x) \to U[\varphi](x) \,\hbox{ as }\, B\to\infty\, \hbox{ uniformly in }\,x\in S. $$
That is, $U_B[\varphi](x)$ converges to $U[\varphi](x)$ locally uniformly for $x\in\R$ as $B\to\infty$.

(iii) We only prove that $U_B$ maps $\mathcal{C}$ into itself, since  the other properties can be easily checked. It follows easily from the definition of $U_B$ and the properties of $U$ stated in (A1) and (A3) that $U_B[\varphi]\in C(\R)$, and that $0\leq U_B[\varphi](x)\leq p(0,x)$ for all $x\in\R$. 

We now show that, for any $\varphi\in\mathcal{C}$ with left supporting point $g_0=-\infty$ and right supporting point $h_0<\infty$, $U_B[\varphi]$ has the same type of supporting points. Set 
$$h_1=\sup\big\{x\in\R:\, U_B[\varphi](x)>0 \big\}.$$ 
By the continuity of $U_B[\varphi](x)$, we have $U_B[\varphi](h_1)=0$.
If $x_0\leq h_0$, then the right supporting point of $u_{B,x_0}(0,x)$ is $\min\{x_0+B, h_0\}\geq x_0$, and its left supporting point is $x_0-B$. So $h_{B, x_0}(\omega)>x_0,\; g_{B,x_0}(\omega)<x_0-B$. Hence 
\[
U_B[\varphi](x_0)=u_{B, x_0}(\omega, x_0)>0,
\]
which implies
$h_1> h_0$.
For any $x_0\geq h_0+B$, we have $u_{B, x_0}(0,x)\equiv 0$ and hence $U_B[\phi](x_0)=0$. It follows that $h_1\leq h_0+B$.
Summarising the above, we have
\[ h_0<h_1\leq h_0+B,\; 
\mbox{ $U_B[\varphi](x)>0$ for all $x\leq h_0$.}
\]
 We show next  that $U_B[\varphi](x_0)>0$ for all $ h_0<x_0<h_1$. Indeed, for any such $x_0$, by the definition of $h_1$, there exists $x_1 \in (x_0,h_1)$ such that $U_B[\varphi](x_1)>0$. That is, $u_{B,x_1}(\omega,x_1)>0$ and so $h_{B,x_1}(\omega)>x_1>x_0$. 
Furthermore, since $x_1<h_1\leq h_0+B$, the left supporting point of $u_{B,x_1}(0,x)$ is $x_1-B<h_0$. It follows that
$g_{B,x_1}(\omega)<h_0<x_0$. Thus 
\[
\mbox{$g_{B,x_1}(\omega)<x_0<h_{B,x_1}(\omega)$. }
\]
On the other hand, since the function $\eta(s)$ is nonincreasing in $s\in\R^+$, it follows that 
$$\eta\Big(\frac{|x_0-y|}{B}\Big)\varphi(y)\geq \eta\Big(\frac{|x_1-y|}{B}\Big)\varphi(y)\,\hbox{ for }\, y\in [x_1-B,h_0].$$ 
Then applying the comparison principle \cite[Proposition~2.10]{ddl} to \eqref{eqtruct}, we obtain
$$u_{B,x_0}(\omega,y)\geq u_{B,x_1}(\omega,y)\,\hbox{ for }\, g_{B,x_1}(\omega)<y<h_{B,x_1}(\omega).$$ 
This in particular implies that $u_{B,x_0}(\omega,x_0)\geq u_{B,x_1}(\omega,x_0)>0$, that is, $U_B[\varphi](x_0)>0$. 
Thus the left supporting point of $U_B[\varphi]$ is $-\infty$, and its right supporting point is $h_1$.

The analysis for $\varphi\in\mathcal{C}$ with other types of supporting points is similar. The proof of Lemma~\ref{trunctedp} is thereby complete.
\end{proof}

In our analysis below, we need a fixed positive $L$-periodic function $w\in C(\R)$ such that $0<w(x)<p(0,x)$ for all $x\in\R$. We fix a small positive constant $\epsilon$ such that $p(0,x)-\epsilon > w(x)\geq \epsilon$ for all $x\in\R$.
Our first result involving this function $w(x)$ is the lemma below.
\begin{lem}\label{largebn}
There exists $B_1>0$ and $N_1\in\N$ such that 
\begin{equation*}
U_B^n[w](x)\geq p(0,x)-\epsilon\, \hbox{ for all }\,x\in\R,\,B\geq B_1,\,n\geq N_1. 
\end{equation*}  
\end{lem}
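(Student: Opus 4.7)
The plan is to combine the uniform convergence $U^n[w] \to p(0,\cdot)$ from (A4) with the convergence $U_B[\varphi] \to U[\varphi]$ from Lemma~\ref{trunctedp}(ii), upgraded by induction on $n$ to the joint convergence $U_B^n[w] \to U^n[w]$ uniformly on $\R$ as $B\to\infty$ for each fixed $n$. This yields the desired lower bound for $n$ in a finite window, and a monotonicity trick then extends it to all $n \geq N_1$.

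Concretely, let $\delta_0 := \min_{x\in \R}(p(0,x)-w(x))$, which is finite and positive since $p(0,\cdot)-w$ is continuous, $L$-periodic and pointwise positive. Set $\eta := \tfrac{1}{3}\min(\epsilon,\delta_0)$. By (A4) applied to $w$, pick $N_1\in\N$ so large that $U^n[w](x) \geq p(0,x) - \eta$ for all $x\in\R$ and $n \geq N_1$. I then claim, by induction on $n$, that $U_B^n[w] \to U^n[w]$ uniformly on $\R$ as $B\to\infty$. Since $w$ is $L$-periodic and (A2) gives $L\Z$-translation invariance of $U$, each iterate $U_B^n[w]$ and $U^n[w]$ is $L$-periodic, so local uniform convergence on $\R$ is equivalent to uniform convergence. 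The base case $n=1$ is Lemma~\ref{trunctedp}(ii). For the inductive step, set $\varphi_B := U_B^n[w]$, $\varphi := U^n[w]$, and note $\varphi_B\to\varphi$ uniformly on $\R$ by hypothesis. If uniform convergence $U_B[\varphi_B] \to U[\varphi]$ on $\R$ failed, by periodicity one could find $\delta>0$, $B_k\to\infty$ and $x_k\in[0,L]$ with $|U_{B_k}[\varphi_{B_k}](x_k) - U[\varphi](x_k)| \geq \delta$; after passing to a subsequence with $x_k\to x^*$, the truncated data $\eta(|\cdot-x_k|/B_k)\varphi_{B_k}(\cdot)$ converges locally uniformly to $\varphi$ while its supporting points tend to $\pm\infty$, so property (A3) yields $U_{B_k}[\varphi_{B_k}](x_k) = U[\eta(|\cdot-x_k|/B_k)\varphi_{B_k}](x_k) \to U[\varphi](x^*)$; combined with $U[\varphi](x_k)\to U[\varphi](x^*)$ by continuity, this contradicts the assumption.

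Next, pick $B_1>0$ so large that $U_B^n[w](x) \geq U^n[w](x) - \eta$ for all $x\in\R$, all $B\geq B_1$ and all $n$ in the finite set $\{N_1,\ldots,2N_1-1\}$. Combined with the choice of $N_1$, this gives $U_B^n[w] \geq p(0,\cdot) - 2\eta$ on that range. Since $2\eta \leq \tfrac{2}{3}\epsilon < \epsilon$, this already provides the desired bound for those $n$; and since $2\eta \leq \tfrac{2}{3}\delta_0 < \delta_0 \leq p(0,\cdot) - w$, we also obtain $U_B^n[w] \geq w$. In particular $U_B^{N_1}[w] \geq w$, so iterating the order-preserving property (A1) gives $U_B^{(m+1)N_1+j}[w] \geq U_B^{mN_1+j}[w]$ for every $m \geq 0$ and $j \in \{0,\ldots,N_1-1\}$. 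Since every $n \geq N_1$ has a unique representation $n=(m+1)N_1+j$ with $m\geq 0$ and $0\leq j<N_1$, iterating this inequality downward in $m$ gives $U_B^n[w] \geq U_B^{N_1+j}[w] \geq p(0,\cdot) - \epsilon$, which is the conclusion of the lemma.

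The main obstacle is the inductive convergence step $U_B^n[w]\to U^n[w]$: Lemma~\ref{trunctedp}(ii) only provides this with a \emph{fixed} argument function, whereas we need it as both the truncation parameter $B$ and the function $\varphi_B = U_B^n[w]$ vary simultaneously with $B$. The joint limit is handled by the compactness afforded by $L$-periodicity of all iterates, together with the continuity (A3) applied to the truncated data; the remaining arithmetic bookkeeping leverages the finite window $\{N_1,\ldots,2N_1-1\}$ and the subsolution-type inequality $U_B^{N_1}[w]\geq w$ to extend the bound from finitely many iteration counts to all $n\geq N_1$ using a single $B_1$.
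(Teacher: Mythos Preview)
Your proof is correct and follows essentially the same three-step strategy as the paper: use (A4) to get $U^n[w]$ close to $p(0,\cdot)$, approximate $U_B^n[w]$ by $U^n[w]$ for finitely many $n$ in a window of length $N_1$ (the paper uses $N_0$), and then use $U_B^{N_1}[w]\geq w$ together with order preservation to propagate the bound to all $n\geq N_1$. You are in fact more careful than the paper in justifying the inductive convergence $U_B^n[w]\to U^n[w]$ via (A3) and periodicity; the paper invokes this only through a brief reference to Lemma~\ref{trunctedp}(ii), and your introduction of $\delta_0$ and $\eta$ is unnecessary here only because the paper has already fixed $\epsilon$ with $p(0,x)-\epsilon>w(x)$.
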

\begin{proof}
It follows from the property (A4) that $U^n[w](x)$ converges to $p(0,x)$ as $n\to\infty$ uniformly in $x\in\R$. This implies that there exists some $N_0\in\N$ such that 
$$U^n[w](x)\geq p(0,x)-\epsilon/2\,\hbox{ for all } \,x\in \R,\, n\geq N_0.$$
By Lemma~\ref{trunctedp} (ii), we find some large $B_1>0$ such that 
$$U_B^{N_0+k}[w](x)\geq p(0,x)-\epsilon> w(x)\,\hbox{ for all } \,x\in [0,L],\, B\geq B_1,\, k=0,1,\cdots, N_0-1.$$
For every $n\geq N_0$, there exists $m\in\N$ and $k\in \{0,1,\cdots,N_0-1\}$ such that $n=mN_0+k$, whence it follows from the order-preserving property of $U_B$ that
$$
U_B^{n}[w](x)= U_B^{mN_0+k}[w](x)\geq U_B^{(m-1)N_0}[w](x) \,\hbox{ for all } \,x\in [0,L],\, B\geq B_1.   
 $$
For  $n\geq N_1:=2N_0$, we have $m\geq 2$ and thus 
\[
U_B^{(m-1)N_0}[w](x)\geq U_B^{N_0}[w](x)\geq p(0,x)-\epsilon \,\hbox{ for all } \,x\in [0,L],\, B\geq B_1,   
 \]
where we have used  $U_B^{N_0}[w](x)> w(x)$ to obtain the first inequality.
We thus obtain
\[
U_B^{n}[w](x)\geq p(0,x)-\epsilon \,\hbox{ for all } \,x\in [0,L],\, B\geq B_1,\; n\geq N_1.
\]
Since $U_B$ has property (A2), and $w(x)$ is $L$-periodic, we see that $U_B^n[w](x)$ is $L$-periodic in $x$ for all $n\in\N$. Thus the above inequality holds for all $x\in\R$, and
the proof of Lemma~\ref{largebn} is  complete.
\end{proof}

Let $B_1$ and $N_1$ be given by Lemma \ref{largebn}. 
Fix $B\geq B_1$ and let $\{w_B^n\}_{n\in\N}$ be determined by the following recursion 
$$w_B^n(x)=\max\Big\{ w(x),  U_B\big[w_B^{n-1}\big](x)\Big\}, \quad  w_B^0(x)=w(x). $$
It is easy to see that for each fixed $n\in\N$, $w_B^n(x)$ is positive, continuous and $L$-periodic in $x$, and it is nondecreasing in $n$. Moreover, since $w_B^k(x)\geq w(x)$ for all $k\in\N$, we have, by the proof of Lemma~\ref{largebn},
\begin{equation}
\label{4.2}
w_B^n(x)\geq U_B^n[w](x)>w(x) \mbox{ for }n\geq N_1,\; x\in\R.
\end{equation}
 Then necessarily 
\begin{equation}\label{pp1}
w_B^n(x)=U_B\big[w_B^{n-1}\big](x)\,\hbox{ for all }\, x\in\R,\,n\geq N_1.
\end{equation}

Now, for the same fixed $B\geq B_1$ we define
$$Q_{+,B}[\phi](\xi,x):=U_B[\phi(\cdot+\xi-x,\cdot)](x)\,\,\hbox{ for }\, \phi\in \mathcal{M}.$$
Then $Q_{+,B}$ is a map from $\mathcal{M}$ to $\mathcal{M}$.\footnote{For $\phi\in\mathcal{M}$, since $U_B$ has the properties stated in (A1)-(A3), similar analysis to that in Lemma~\ref{wellde} indicates that $Q_{+,B}[\phi](\xi,x)$ possesses the properties (a)-(c) and (e). To prove (d), one may use the same arguments as those used in the proof of Lemma~\ref{trunctedp} to conclude that, for every $\xi\in\R$, $Q_{+,B}[\phi](\xi+x,x)=0$ if and only if $x\geq \tilde{H}(\xi)$ where $\tilde{H}(\xi)=\sup\{x\in\R: \,U_B[\phi(\cdot+\xi,\cdot)](x)>0\}$.}

 We next fix a function $\phi_0(\xi,x)$ in $\mathcal{M}$ such that $\phi_0(\xi,x)\equiv w(x)$ for all $\xi\leq -1$, and $\phi_0(\xi,x)\equiv 0$ for $\xi\geq  0$. Then for any  $0<c< c_+$, we define 
\begin{equation}\label{4.4}
\tilde{a}_{n+1}^{c}(\xi,x)=\max\Big\{\phi_0(\xi,x), \,Q_{+,B}[\tilde{a}_n^{c}](\xi+c,x) \Big\} \quad\hbox{with}\quad  \tilde{a}_{0}^{c}(\xi,x)=\phi_0(\xi,x).
\end{equation}
It is easily cheked that $\tilde{a}_{n}^{c}(\xi,x)$ is nondecreasing in $n$, nonincreasing in $\xi$ and $c$, and $L$-periodic in $x$. Moreover, we have the following conclusions.

\begin{lem}\label{trunlarsma} 
{\rm (i)} For every $n\in\N$ and $x\in\R$,
\begin{equation}\label{pp2}
\tilde{a}_{n}^{c}(\xi,x)=\left\{\baa{ll}
w_B^n(x),& \hbox{ if }\, \xi\leq -n(B+c)-1,\vspace{3pt}\\
0,&\hbox{ if }\, \xi\geq n(B-c).\eaa\right.
\end{equation} 
$(ii)$ There is some $N_2\in\N$, $B_2>0$ such that 
\begin{equation}\label{pp3}
\tilde{a}_{n+1}^{c}(\xi,x)= Q_{+,B}[\tilde{a}_n^{c}](\xi+c,x) \,\hbox{ for all } x\in\R,\,\xi\in\R,\, B\geq B_2,\, n\geq N_2. 
\end{equation}
\end{lem}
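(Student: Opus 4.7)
\medskip

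\noindent
The plan is to prove part~(i) by a clean induction using the locality property from Lemma~\ref{trunctedp}(i), and to prove part~(ii) by transferring the filling-up argument of Lemma~\ref{comkey} to the truncated operator $U_B$. Throughout, I tacitly enlarge $B_1$ if necessary so that $B_1 > c$, which ensures that the thresholds $n(B-c)$ and $-n(B+c)-1$ in \eqref{pp2} lie on the expected sides of $0$ and $-1$.

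\smallskip
\noindent\textbf{Part (i).} Induction on $n$. The base case $n=0$ is immediate from the construction of $\phi_0$, since $-0\cdot(B+c)-1=-1$ and $0\cdot(B-c)=0$. For the inductive step, fix $\xi$ and $x$, and note that by Lemma~\ref{trunctedp}(i) the quantity $Q_{+,B}[\tilde a_n^c](\xi+c,x)=U_B[\tilde a_n^c(\cdot+\xi+c-x,\cdot)](x)$ depends only on the values $\tilde a_n^c(y+\xi+c-x,y)$ for $y\in[x-B,x+B]$. If $\xi\geq (n+1)(B-c)$, then for every such $y$ one has $y+\xi+c-x\geq -B+(n+1)(B-c)+c=n(B-c)$, so by the inductive hypothesis $\tilde a_n^c(y+\xi+c-x,y)=0$ on $[x-B,x+B]$; Lemma~\ref{trunctedp}(i) then forces $Q_{+,B}[\tilde a_n^c](\xi+c,x)=0$, and since $\phi_0(\xi,x)=0$ for $\xi\geq 0$, we obtain $\tilde a_{n+1}^c(\xi,x)=0$. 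If $\xi\leq -(n+1)(B+c)-1$, then for every $y\in[x-B,x+B]$ one has $y+\xi+c-x\leq B+\xi+c\leq -n(B+c)-1$, so the inductive hypothesis yields $\tilde a_n^c(y+\xi+c-x,y)=w_B^n(y)$ on $[x-B,x+B]$. Hence by Lemma~\ref{trunctedp}(i), $Q_{+,B}[\tilde a_n^c](\xi+c,x)=U_B[w_B^n](x)$, and using $\phi_0(\xi,x)=w(x)$ together with the defining recursion of $w_B^{n+1}$ we conclude
\[
\tilde a_{n+1}^c(\xi,x)=\max\{w(x),U_B[w_B^n](x)\}=w_B^{n+1}(x).
\]

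\smallskip
\noindent\textbf{Part (ii).} The identity \eqref{pp3} is equivalent to the inequality $Q_{+,B}[\tilde a_n^c](\xi+c,x)\geq \phi_0(\xi,x)$ for every $(\xi,x)\in\R^2$. For $\xi\geq 0$ this is automatic since $\phi_0(\xi,x)=0$. For $\xi<0$, since $\phi_0\leq w$ and $Q_{+,B}[\tilde a_n^c](\cdot,x)$ is nonincreasing in its first argument, it suffices to establish
\[
Q_{+,B}[\tilde a_n^c](c,x)\geq w(x)\quad\text{for all } x\in\R,\ n\geq N_2,\ B\geq B_2.
\]
I would prove this by transferring the machinery of Section~\ref{sec3} to the truncated operator $U_B$, which inherits properties (A1)--(A3) from Lemma~\ref{trunctedp}(iii), and a form of (A4) from Lemma~\ref{largebn}. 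Concretely, the monotone limits $\tilde a^c(\xi,x)=\lim_{n\to\infty}\tilde a_n^c(\xi,x)$ and $\tilde H^c(\xi)=\lim_{n\to\infty}\tilde H_n^c(\xi)$ exist and satisfy a fixed-point relation of the same shape as \eqref{recurlima}; introducing the truncated rightward speed $c_{+,B}:=\sup\{c':\tilde a^{c'}(\infty,\cdot)\equiv p(0,\cdot)\}$, the verbatim analogues of Lemmas~\ref{acinfty}--\ref{proc}--\ref{comkey} go through and yield: whenever $c<c_{+,B}$, there exists $n_0\in\N$ such that $\tilde a_{n_0}^c(0,x)>w(x)$ for every $x\in\R$. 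Combined with the monotonicity of $\tilde a_n^c$ in $\xi$ and in $n$, this gives $\tilde a_n^c(\xi,x)>w(x)\geq\phi_0(\xi,x)$ for every $\xi\leq 0$, $x\in\R$, and every $n\geq n_0$. Consequently $\tilde a_{n+1}^c(\xi,x)\geq \tilde a_n^c(\xi,x)>\phi_0(\xi,x)$ wherever $\phi_0(\xi,x)>0$, so the maximum defining $\tilde a_{n+1}^c$ must be realized by the second term, which gives \eqref{pp3} with $N_2:=n_0$.

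\smallskip
\noindent\textbf{Main obstacle.} The substantive step is to verify that the chosen $c\in(0,c_+)$ satisfies $c<c_{+,B}$ for all sufficiently large $B$, i.e.\ that $\liminf_{B\to\infty}c_{+,B}\geq c_+$. The opposite bound $c_{+,B}\leq c_+$ is immediate from $U_B[\varphi]\leq U[\varphi]$ and the comparison principle of Proposition~\ref{opcompare}, but the lower bound requires genuine work. The idea is to use Lemma~\ref{trunctedp}(ii), which gives $U_B[\varphi]\to U[\varphi]$ locally uniformly as $B\to\infty$, and propagate this through the finite-step recursions: for any fixed $c'<c_+$ and any fixed $n$, the truncated iterates of $\phi_0$ under $Q_{+,B}$ converge locally uniformly to the un-truncated iterates under $Q_+$ as $B\to\infty$. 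Combining this with part~(i) (which controls the tails of $\tilde a_n^c$ independently of $B$) and Lemma~\ref{proc} applied in the un-truncated setting then yields $c_{+,B}\geq c'$ for all large $B$, and letting $c'\uparrow c_+$ finishes the argument. Everything else in part~(ii) is a routine transcription of the arguments already developed in Section~\ref{sec3}.
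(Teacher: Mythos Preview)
Your Part~(i) is correct and matches the paper's argument.

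For Part~(ii), your reduction to showing $\tilde a_{n+1}^c(0,x)>w(x)$ for all $x$, all large $n$, and all large $B$ is correct, but the route you propose has a genuine gap. The truncated operator $U_B$ does \emph{not} satisfy (A4): for $\varphi=p(0,\cdot)$ the function $\eta(|\cdot-x|/B)\,p(0,\cdot)$ is compactly supported, so $U_B[p(0,\cdot)](x)$ is the time-$\omega$ value of the solution to the double free boundary problem \eqref{eqf} with that compactly supported initial datum, and by the strong maximum principle this is strictly less than $p(0,x)$. Hence $p(0,\cdot)$ is not a fixed point of $U_B$, the dichotomy of Lemma~\ref{acinfty} has no analogue in the truncated setting, and your quantity $c_{+,B}=\sup\{c':\tilde a^{c'}(\infty,\cdot)\equiv p(0,\cdot)\}$ may well equal $-\infty$ for every finite $B$. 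The ``verbatim analogues of Lemmas~\ref{acinfty}--\ref{proc}--\ref{comkey}'' therefore cannot go through as stated.

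The paper avoids all of this by working first with the \emph{un-truncated} recursion $\{a_n^c\}$ from \eqref{recuran} (started at $\phi_0$), where Lemmas~\ref{proc} and~\ref{comkey} are already available. Since $c<c_+$ and one may take $H_2=0$ for $\phi_0$, Lemma~\ref{comkey} yields $N_2$ with $a_{n}^c(0,x)>w(x)$ for all $x\in\R$ and $n\geq N_2$. One then invokes Lemma~\ref{trunctedp}(ii) only at the single fixed level $n=N_2$: iterating the monotone convergence $U_B[\varphi]\uparrow U[\varphi]$ through $N_2$ steps gives $\tilde a_{N_2}^c(0,\cdot)\to a_{N_2}^c(0,\cdot)$ locally uniformly as $B\to\infty$, and by $L$-periodicity and the strict gap this produces $B_2$ with $\tilde a_{N_2}^c(0,x)>w(x)$ for all $x\in\R$ and $B\geq B_2$; monotonicity in $n$ then extends this to all $n\geq N_2$. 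You essentially sketch this very argument in your ``Main obstacle'' paragraph: that sketch \emph{is} the proof, and the detour through a truncated spreading speed $c_{+,B}$ should be dropped.
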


\begin{proof}
We  prove \eqref{pp2} by an induction argument. By our choice of $\phi_0$,  \eqref{pp2} trivially holds in the case of $n=0$. Now suppose that \eqref{pp2} holds for some $n=n_0$. 

By definition,
$$Q_{+,B}[\tilde{a}_{n_0}^{c}](\xi+c,x)=U_B[\tilde{a}_{n_0}^{c}(\cdot+\xi+c-x,\cdot)](x)\,\hbox{ for all }\,\xi\in\R,\,x\in\R.$$
By Lemma~\ref{trunctedp} (i),  $Q_{+,B}[\tilde{a}_{n_0}^{c}](\xi+c,x)$ only depends on the values of $\tilde{a}_{n_0}^{c}(y+\xi+c-x,y)$ with $|x-y|\leq B$.  
For any $\xi\leq -(n_0+1)(B+c)-1$, $x\in\R$ and $y\in\R$ with $|y-x|\leq B$, we have $y+\xi+c-x\leq -n_0(B+c)-1$, whence by the induction assumption $\tilde{a}_{n_0}^{c}(y+\xi+c-x,y)=w_B^{n_0}(y)$, and so
 $Q_{+,B}[\tilde{a}_{n_0}^{c}](\xi+c,x)=U_B[w_B^{n_0}](x)$. This together with the fact $\phi_0(\xi,x)\equiv w(x)$ for such $\xi$ gives
\[
 \tilde{a}_{n_0+1}^{c}(\xi,x)=\max\{w(x), U_B[w^{n_0}_B](x)\}=w_B^{n_0+1}(x).
\]
 Similarly, one concludes that 
$\tilde{a}_{n_0+1}^{c}(\xi,x)\equiv 0$ for $\xi\geq (n_0+1)(B-c)$. Thus \eqref{pp2} also holds for $n=n_0+1$. 
The induction principle then concludes that \eqref{pp2} holds for all $n\in\N$.

Next, we prove \eqref{pp3}.
Since $c< c_+$,  from Lemma~\ref{infian} (i) and Lemma~\ref{proc} we see that, for any 
fixed $\xi\in\R$, $a_{n}^{c}(\xi+x,x)$ converges to $p(0,x)$ locally uniformly in $x\in\R$ as $n\to\infty$, where $\big\{a_{n}^{c}(\xi,x)\big\}_{n\in\N}$ is the sequence obtained from the recursion~\eqref{recuran} with $a_{0}^{c}=\phi_0$. 
Furthermore, by  Lemma~\ref{comkey} and the monotonicity of $a_{n}^c(\xi,x)$ in $n$, $c<c_+$ implies the existence of
 $N_2\in\N$ such that $a_{n+1}^{c}(0,x)> \phi_0(-\infty,x)=w(x)$ for all $x\in\R$, $n\geq N_2$. Then  Lemma~\ref{trunctedp} (ii) implies that there is some $B_2>0$ such that 
\[\mbox{
$\tilde{a}_{n+1}^{c}(0,x)\geq \tilde a_{N_2}^c(0,x)> w(x)$ for all $x\in [0, L]$, $n\geq N_2$, $B\geq B_2$.}
\]
Since both $\tilde{a}_{n+1}^{c}(0,x)$ and $ w(x)$ are $L$-periodic in $x$, the above inequality holds for all $x\in\R$.

 Since $\tilde a_{n+1}(\xi,x)$ is nonincreasing in $\xi$, and since $\phi_0(\xi,x)\leq \phi_0(-\infty,x)\equiv w(x)$, we obtain
$$\tilde{a}_{n+1}^{c}(\xi,x)\geq \tilde a_{n+1}^c(0,x)>  \phi_0(\xi,x)\,\hbox{ for all }\, x\in\R, \,\xi\leq 0,\, n\geq N_2,\,B\geq B_2.$$ 
In view of \eqref{4.4}, this  implies that \eqref{pp3} holds for $\xi\leq 0$.
 Since $\phi_0(\xi,x)\equiv 0$ for $\xi> 0$, we see that \eqref{pp3} also holds for $\xi> 0$.
The proof of Lemma~\ref{trunlarsma} is thereby complete.  
\end{proof}

Correspondingly, we define
$$Q_{-,B}[\tilde{\phi}](\xi,x):=U_B[\tilde{\phi}(\cdot+\xi-x,\cdot)](x)\,\,\hbox{ for }\, \tilde{\phi}\in \tilde{\mathcal{M}},$$
and choose a function $\tilde{\phi}_0(\xi,x)\in\tilde{\mathcal{M}}$ such that $\tilde{\phi}_0(\xi,x)\equiv w(x)$ for $\xi\geq 1$ and $\tilde{\phi}_0(\xi,x)\equiv 0$ for $\xi\leq 0$. For any $0<c'< c_-$, we define 
\begin{equation*}
\tilde{b}_{n+1}^{c'}(\xi,x)=\max\Big\{\tilde{\phi}_0(\xi,x), \,Q_{-,B}[\tilde{b}_n^{c'}](\xi-c',x) \Big\} \quad\hbox{with}\quad  \tilde{b}_{0}^{c'}(\xi,x)=\tilde{\phi}_0(\xi,x).
\end{equation*}
Then $\tilde{b}_{n}^{c'}(\xi,x)$ is nondecreasing in $n$, nondecreasing in $\xi$ and $c$, and $L$-periodic in $x$. Moreover, we have 

 \begin{lem}\label{trunlarsma2} 
{\rm (i)} For every $n\in\N$ and $x\in\R$, 
\begin{equation*}
\tilde{b}_{n}^{c'}(\xi,x)=\left\{\baa{ll}
w_B^n(x),& \hbox{ if }\, \xi\geq n(B+c')+1,\vspace{3pt}\\
0,&\hbox{ if }\, \xi\leq -n(B-c').\eaa\right.
\end{equation*} 
{\rm (ii)} There is $N_3\in\N$, $B_3>0$ such that 
$$\tilde{b}_{n+1}^{c'}(\xi,x)= Q_{-,B}[\tilde{b}_n^{c'}](\xi-c',x) \,\hbox{ for all }\,x\in\R,\,\xi\in\R,\,  B\geq B_3,\, n\geq N_3. $$
\end{lem}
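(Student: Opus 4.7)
The plan is to mirror the proof of Lemma \ref{trunlarsma} with the orientation reversed. The leftward problem \eqref{eqfunbdn} is reduced to the rightward one by the substitution $\tilde f(t,x,u)=f(t,-x,u)$, so the symmetric analogues of Lemmas \ref{wellde}--\ref{comkey} and Proposition \ref{proc} apply for the operator $Q_{-,B}$ and the sequences $\{\tilde b_n^{c'}\}$. Throughout, one must be careful that $\tilde b_n^{c'}(\xi,x)$ is now \emph{nondecreasing} in $\xi$ (opposite to $\tilde a_n^c$), and that $\tilde\phi_0$ is supported in $\{\xi\geq 0\}$ rather than $\{\xi\leq 0\}$.

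For part (i), I would proceed by induction on $n$. The base case $n=0$ holds by the choice of $\tilde\phi_0$. Assume (i) holds for some $n_0\in\N$. By definition,
\[
Q_{-,B}[\tilde b_{n_0}^{c'}](\xi-c',x)=U_B\big[\tilde b_{n_0}^{c'}(\cdot+\xi-c'-x,\cdot)\big](x),
\]
and by Lemma \ref{trunctedp}(i) this depends only on the values $\tilde b_{n_0}^{c'}(y+\xi-c'-x,y)$ with $|y-x|\leq B$. If $\xi\geq (n_0+1)(B+c')+1$ and $|y-x|\leq B$, then $y+\xi-c'-x\geq n_0(B+c')+1$, so the inductive hypothesis gives $\tilde b_{n_0}^{c'}(y+\xi-c'-x,y)=w_B^{n_0}(y)$, and therefore $Q_{-,B}[\tilde b_{n_0}^{c'}](\xi-c',x)=U_B[w_B^{n_0}](x)$. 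Combining with $\tilde\phi_0(\xi,x)\equiv w(x)$ for $\xi\geq 1$ and the defining recursion yields $\tilde b_{n_0+1}^{c'}(\xi,x)=\max\{w(x),U_B[w_B^{n_0}](x)\}=w_B^{n_0+1}(x)$. The vanishing part is symmetric: for $\xi\leq -(n_0+1)(B-c')$ and $|y-x|\leq B$, one has $y+\xi-c'-x\leq -n_0(B-c')$, hence $\tilde b_{n_0}^{c'}(y+\xi-c'-x,y)=0$, so $Q_{-,B}[\tilde b_{n_0}^{c'}](\xi-c',x)=0$ and $\tilde b_{n_0+1}^{c'}(\xi,x)=\tilde\phi_0(\xi,x)=0$.

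For part (ii), the argument runs parallel to the end of the proof of Lemma \ref{trunlarsma}. Let $\{b_n^{c'}\}$ be the sequence obtained from the untruncated recursion with $b_0^{c'}=\tilde\phi_0$. Since $0<c'<c_-$, the symmetric analogue of Proposition \ref{proc} gives $b^{c'}(-\infty,x)\equiv p(0,x)$, and then the symmetric version of Lemma \ref{comkey} (applied at the appropriate reference level $\xi=-G_2$, where $\tilde\phi_0(\xi,\cdot)\equiv 0$ for $\xi\leq -G_2$) together with monotonicity in $n$ produces $N_3\in\N$ such that $b_{N_3+1}^{c'}(0,x)>\tilde\phi_0(+\infty,x)=w(x)$ for every $x\in\R$. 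Lemma \ref{trunctedp}(ii) then yields $B_3>0$ such that $\tilde b_{N_3+1}^{c'}(0,x)>w(x)$ uniformly for $x\in[0,L]$ and all $B\geq B_3$; $L$-periodicity extends this to $x\in\R$. Using that $\tilde b_n^{c'}$ is nondecreasing in $n$ and in $\xi$, together with $\tilde\phi_0(\xi,x)\leq w(x)$, we obtain $\tilde b_{n+1}^{c'}(\xi,x)>\tilde\phi_0(\xi,x)$ for all $\xi\geq 0$, $n\geq N_3$, $B\geq B_3$. The defining recursion then collapses to the required identity for $\xi\geq 0$. For $\xi<0$ we have $\tilde\phi_0(\xi,x)=0$, so the max in the recursion is automatically attained by the $Q_{-,B}$-term.

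The only substantive point is verifying that the symmetric analogues of Lemma \ref{comkey} and Proposition \ref{proc} are legitimately available; this is immediate from the reduction via $\tilde f(t,x,u)=f(t,-x,u)$ already invoked before Proposition \ref{disspeedl}. Beyond that, the proof is essentially bookkeeping, and no new technical difficulty arises.
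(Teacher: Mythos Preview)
Your proposal is correct and follows exactly the approach implicit in the paper, which states Lemma~\ref{trunlarsma2} as the leftward analogue of Lemma~\ref{trunlarsma} without writing out a separate proof. The inductive arithmetic in part~(i) and the use of the symmetric version of Lemma~\ref{comkey} at the reference level $\xi=0$ (where $\tilde\phi_0$ vanishes) in part~(ii) are both handled correctly.
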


\bigskip

For fixed $c\in (0, c_+)$ and $c'\in (0, c_-)$, let $B_1,\, B_2,\, B_3$ and $N_1,\, N_2,\, N_3$ be given by Lemmas \ref{largebn}, \ref{trunlarsma} and \ref{trunlarsma2}.
Then fix some $B> \max\{B_1,B_2,B_3\}$, some $m> \max\{N_1,N_2,N_3\}$, and choose constants $A$ and $A'$  such that 
$$A\geq \frac{1+m(B+c)+2B}{c},\quad  A'\geq \frac{1+m(B+c')+2B}{c'}.$$
We now define, for every $n\in\N$,
\begin{equation*}
e_{n}(x)=\left\{\baa{ll}
\tilde{a}_{m}^{c}(x-(n+A)c,x),& \hbox{ if }\, x\geq 0,\vspace{3pt}\\
\tilde{b}_{m}^{c'}(x+(n+A')c',x),&\hbox{ if }\, x\leq 0.\eaa\right.
\end{equation*} 

By Lemma~\ref{trunlarsma} (i) and Lemma~\ref{trunlarsma2} (i), it is easy to check that for each $n\in\N$, $e_{n}\in\mathcal{C}$ and that 
\begin{equation}\label{pp4}
e_{n}(x)=\left\{\baa{ll}
w_B^m(x),& \hbox{ if }\,  x\in [-l_{m,n}+1, \tilde l_{m,n}-1],\vspace{3pt}\\
0,&\hbox{ if }\, x\not\in [-l_{m,n}-2m(B+c'), \tilde l_{m,n}+2m(B-c)].\eaa\right.
\end{equation} 
where
\[
l_{m,n}:=(n+A')c'-m(B+c'),\; \tilde l_{m,n}:=(n+A)c-m(B+c).
\]

Furthermore,  the sequence $\{e_n\}_{n\in\N}$ has the following key property.  

\begin{lem}\label{trsubsolution}
$e_{n+1}(x)\leq U_B[e_n](x)$ for all $x\in\R$, $n\in\N$. 
\end{lem}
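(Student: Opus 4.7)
The plan is to exploit the truncation/locality property of $U_B$ stated in Lemma~\ref{trunctedp}(i): $U_B[\varphi](x)$ depends only on the values of $\varphi$ on $[x-B,x+B]$. Because $e_n$ is defined by the formula involving $\tilde a_m^c$ for $x\geq 0$ and $\tilde b_m^{c'}$ for $x\leq 0$, with a common value $w_B^m(x)$ near the origin, I will split $\R$ into three regions:  $x\geq B$, $x\leq -B$, and $|x|<B$, and verify the desired inequality in each.

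First I would treat the region $x\geq B$. Since $[x-B,x+B]\subset[0,\infty)$, we have $e_n(y)=\tilde a_m^c\bigl(y-(n+A)c,y\bigr)$ for every $y\in[x-B,x+B]$. By the locality of $U_B$, this gives
\[
U_B[e_n](x)=U_B\bigl[\tilde a_m^c(\cdot-(n+A)c,\cdot)\bigr](x)=Q_{+,B}[\tilde a_m^c]\bigl(x-(n+A)c,\,x\bigr),
\]
where the last equality is just the definition of $Q_{+,B}$ with $\xi+c-x=-(n+A)c$. Since $m\geq N_2$, Lemma~\ref{trunlarsma}(ii) (i.e.\ \eqref{pp3}) yields
\[
Q_{+,B}[\tilde a_m^c]\bigl(x-(n+A)c,x\bigr)=\tilde a_{m+1}^c\bigl(x-(n+1+A)c,\,x\bigr).
\]
Combining these with the monotonicity $\tilde a_m^c\leq \tilde a_{m+1}^c$ in $m$ (from the definition \eqref{4.4}) gives
\[
e_{n+1}(x)=\tilde a_m^c\bigl(x-(n+1+A)c,x\bigr)\leq \tilde a_{m+1}^c\bigl(x-(n+1+A)c,x\bigr)=U_B[e_n](x).
\]
The symmetric argument, using $Q_{-,B}$, \eqref{pp3}'s counterpart in Lemma~\ref{trunlarsma2}(ii) and the analogous monotonicity, handles the region $x\leq -B$.

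For the middle region $|x|<B$, the key observation is that the constants $A,A'$ were chosen precisely so that $e_n\equiv w_B^m$ on a sufficiently large neighborhood of the origin. Indeed, from $Ac\geq 1+m(B+c)+2B$ and $A'c'\geq 1+m(B+c')+2B$ one checks that $\tilde l_{m,k}-1\geq 2B$ and $-l_{m,k}+1\leq -2B$ for every $k\in\{n,n+1\}$, so by \eqref{pp4} both $e_n$ and $e_{n+1}$ agree with $w_B^m$ on $[-2B,2B]\supset[x-B,x+B]$. Applying the locality of $U_B$ again, together with \eqref{pp1} (valid because $m\geq N_1$),
\[
U_B[e_n](x)=U_B[w_B^m](x)=w_B^{m+1}(x)\geq w_B^m(x)=e_{n+1}(x),
\]
where the penultimate inequality uses the monotonicity of $w_B^k$ in $k$. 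Combining the three cases, $e_{n+1}(x)\leq U_B[e_n](x)$ holds for every $x\in\R$.

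The main technical point to verify carefully is the consistency of the two descriptions of $e_n$ in the overlap region around $0$, i.e.\ that on $[-2B,2B]$ the formulas $\tilde a_m^c(x-(n+A)c,x)$ and $\tilde b_m^{c'}(x+(n+A')c',x)$ both reduce to $w_B^m(x)$. This follows from Lemma~\ref{trunlarsma}(i) (respectively Lemma~\ref{trunlarsma2}(i)) together with our lower bounds on $A$ and $A'$, which ensure that the argument $x-(n+A)c\leq -m(B+c)-1$ for $x\leq 2B$ (and symmetrically on the left). Once this is in place, the three cases fit together cleanly and give the sub-solution inequality.
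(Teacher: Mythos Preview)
Your proof is correct and uses the same essential ingredients as the paper: the locality of $U_B$ (Lemma~\ref{trunctedp}(i)), the recursion identity \eqref{pp3}, the identity \eqref{pp1}, and the monotonicity of $\tilde a_k^c$ and $w_B^k$ in $k$. The main differences are cosmetic. You partition $\R$ according to whether $[x-B,x+B]$ lies entirely in $[0,\infty)$, entirely in $(-\infty,0]$, or straddles the origin; the paper instead partitions according to whether $[x-B,x+B]$ lies inside the plateau region $[-l_{m,n}+1,\tilde l_{m,n}-1]$ of $e_n$ or not. Your middle region is smaller and your side regions larger, but the choice of $A,A'$ makes either cut work. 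A second minor difference: in the side regions you apply \eqref{pp3} at level $m$ (obtaining $\tilde a_{m+1}^c$ and then using $\tilde a_m^c\le\tilde a_{m+1}^c$), whereas the paper first drops from $\tilde a_m^c$ to $\tilde a_{m-1}^c$ and then applies \eqref{pp3} at level $m-1$ to land exactly on $\tilde a_m^c=e_{n+1}$. Both directions are valid since $m>N_2$. Your decomposition is arguably the more natural one given the piecewise definition of $e_n$.
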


\begin{proof}
We follow similar lines as the proof of \cite[Lemma 3.10]{lz2}. For the sake of completeness, we include the details here. 

For each $n\in\N$, if $x\in[-l_{m,n}+1+B, \tilde l_{m,n}-1-B]$, then for any $y\in\R$ with $|y-x|\leq B$, we have $y\in[-l_{m,n}+1, \tilde l_{m,n} -1]$, and hence $U_B[e_n](x)=U_B[w_B^m](x)$ by \eqref{pp4}. It then follows from \eqref{pp1} and  \eqref{pp4} that 
$$U_B[e_n](x)=w_B^{m+1}(x),\; w_B^{m}(x)=e_{n+1}(x).$$
By the monotonicity of $w_B^k(x)$ in $k$, we have $w_B^{m+1}(x)\geq w_B^m(x)$ and hence
\[
U_B[e_n](x)\geq e_{n+1}(x).
\]

Now suppose that $x>\tilde l_{m,n}-1-B$. Then for any $y\in\R$ with $|y-x|\leq B$, we have $y>\tilde l_{m,n}-1-2B$. By the choice of $A$, we have  $\tilde l_{m,n}-1-2B>nc>0$, and so $y> 0$. Then by the definition of $e_n$ and the monotonicity of $\tilde a_m^c(\xi,x)$ in $m$, we have
$$ e_n(y)= \tilde{a}_{m}^{c}(y-(n+A)c,y)\geq \tilde{a}_{m-1}^{c}(y-(n+A)c,y),$$
and hence, 
\[
U_B[e_n](x)\geq U_B[\tilde{a}_{m-1}^{c}(\cdot-(n+A)c,\cdot)](x).
\] 
It then follows from Lemma~\ref{trunlarsma} (ii) (by choosing $\xi=x-(n+A)c-c$ and $n=m-1$) and the definition of $e_{n+1}$ that 
$$U_B[e_n](x)\geq U_B[\tilde{a}_{m-1}^{c}(\cdot-(n+A)c,\cdot)](x)=\tilde{a}_{m}^{c}(x-(n+A)c-c,x)=e_{n+1}(x).  $$
Similarly, one can prove that $U_B[e_n](x)\geq e_{n+1}(x)$ if $x< -l_{m,n}+1+B$. The proof of Lemma~\ref{trsubsolution} is thereby complete.
\end{proof}

We are now ready to complete the proof of Theorem~\ref{spreadspeed}.

\begin{proof}[Proof of Theorem~\ref{spreadspeed}]
For any $g_0<h_0$ and $u_0\in\mathcal{H}(g_0,h_0)$ satisfying the assumptions in Theorem~\ref{spreadspeed}, we first extend $u_0$ to the whole real line by defining $u_0(x)=0$ for $x\not\in [g_0, h_0]$. After the extension, clearly $u_0\in \mathcal{C}$. 

For any $n\in\N$, set $u_n(x)=U^n[u_0](x)$. To complete the proof of Theorem~\ref{spreadspeed}, it is sufficient to prove that 
\begin{equation}\label{bothd1}
\lim_{n\to\infty} \sup_{ -c_2n\leq x\leq c_1n} \big|u_n(x)-p(0,x)\big|=0\quad \hbox{for any } \,  c_1\in (0, c_+),\; c_2\in (0,c_-),
\end{equation}
and 
\begin{equation}\label{bothd2}
\lim_{n\to\infty} \sup_{x\in \R\setminus [-c_2'n, c_1'n]}u_n(x)=0 \quad \hbox{for any } \,c_1'> c_+ \hbox{ and } c_2'>c_-.
\end{equation}
Indeed, once \eqref{bothd1} and \eqref{bothd2} are obtained,  similar analysis to that used in the proof of Theorem~\ref{therspeed} would imply  \eqref{spreadu} and \eqref{spreadgf}.

We prove \eqref{bothd2} first. Choose some nondecreasing function $\tilde{u}_0\in\mathcal{C}$ with left supporting point $g_0=-\infty$ and right supporting point $h_0<\infty$ such that 
$\tilde{u}_0(x)\geq u_0(x)$ for all $x\in\R$ and that $\tilde{u}_0$ satisfies \eqref{assinit}. 
Since the operator $U$ is order-preserving in the sense of (A1) and since $u_n(x)$ converges to 
$p(0,x)$ as $n\to\infty$ locally uniformly in $x\in\R$ by our assumption \eqref{assupcon}, it follows that $U^n[\tilde{u}_0]$ also converges to $p(0,x)$ as $n\to\infty$ locally uniformly in $x\in\R$. 
This together with the monotonicity of $\tilde{u}_0$ implies that $U^n[\tilde{u}_0]$ satisfies  \eqref{assinitcon}.
Proposition~\ref{disspeed} then infers
$$\lim_{n\to\infty} \sup_{  x\geq c'_1n} U^n[\tilde{u}_0](x)=0\,\hbox{ for any }\,c'_1>c_+,$$ 
whence $\lim_{n\to\infty} \sup_{  x\geq c'_1n} u_n(x)=0$ by (A1) again. Similarly, by applying Proposition~\ref{disspeedl}, one concludes that 
$$\lim_{n\to\infty} \sup_{  x\leq -c_2'n} u_n(x)=0\,\hbox{ for any }\, c_2'>c_-.$$
Thus \eqref{bothd2} holds.

Next, we prove \eqref{bothd1}. For any given $c_1\in (0,c_+)$ and $c_2\in (0,c_-)$, we fix some $c\in (c_1, c_+)$ and $c'\in (c_2, c_-)$. For any $\epsilon>0$ satisfying $p(0,x)-\epsilon\geq w(x)$, let $B\in\R^+$ and $m\in\N$ large enough such that the conclusions in Lemmas~\ref{largebn}-\ref{trsubsolution} are all valid with $n=m$. Since $\lim_{n\to\infty} |u_n(x)-p(0,x)|=0$ locally uniformly in $x\in\R$ by \eqref{assupcon}, and since $e_0$ is compactly supported in $\R$, there is $l\in\N$ such that $u_l(x)\geq e_0(x)$ for all $x\in\R$. Thus using $U[\varphi]\geq U_B[\varphi]$ for all $\varphi\in\mathcal{C}$, by Proposition~\ref{opcompare} and Lemma~\ref{trsubsolution}, we have
\[
\mbox{ $u_{l+n}(x)\geq e_n(x)$ for all $x\in\R$, $n\in\N$.}
\]
 We may now apply \eqref{pp4} to obtain 
$$u_{l+n}(x)\geq w_B^m(x) \hbox{ for } x\in [-l_{m,n}+1, \tilde l_{m,n}-1],
$$
where
\[
l_{m,n}:= (n+A')c'-m(B+c'),\; \tilde l_{m,n}:= (n+A)c-m(B+c).
\]
By Lemma~\ref{largebn} and our choice of $m$ and $B$, we have 
\[
\mbox{$U_B^{m}[w](x)\geq p(0,x)-\epsilon$ for all $x\in\R$.}
\] 
Since $c_1<c< c_+$ and $c_2<c'<c_-$, there exists $n_1=n_1(c,c_1,c',c_2)$ such that for any $n\geq n_1$, 
\[ [-(l+n)c_2, (l+n)c_1]\subset [-l_{m,n}+1, l_{m,n}-1].
\]
  Then, for $n\geq n_1$ and $x\in [-(l+n)c_2, (l+n)c_1]$, we have
$$u_{l+n}(x)\geq   w_B^m(x)\geq U_B^{m}[w](x)\geq p(0,x)-\epsilon.$$   
It follows that 
\[
\limsup_{n\to\infty}\sup_{ -c_2n\leq x\leq c_1n} u_n(x)\geq p(0,x)-\epsilon.
\]
 Since $\epsilon$ is arbitrary and $u_n(x)\leq p(0,x)$, we thus obtain \eqref{bothd1}. The proof of Theorem~\ref{spreadspeed} is thereby complete.
\end{proof}


\section{Proof of Theorem~\ref{limitmu}}\label{sec5}
In this section we prove that the spreading speeds for the free boundary problem \eqref{eqf} converge to those for the corresponding Cauchy problem \eqref{cauchy} as $\mu\to\infty$. 

By Theorem 1.1, it suffices to show the convergence of the spreading speed for problem \eqref{eqfunbd} in the rightward direction and the same for problem \eqref{eqfunbdn} in the leftward direction, as $\mu\to\infty$. We only consider the former, since  the latter follows from the former by a simple change of variables.

Throughout this section, to indicate the dependence on $\mu$,  for any $u_0\in\mathcal{H}_+(h_0)$, we denote the unique solution of \eqref{eqfunbd} by $\big(u_{+,\mu}(t,x;u_0), h_{+,\mu}(t;u_0)\big)$; and we rewrite 
 $U$, $Q_+$, $a_n^c(\xi,x)$, $H^c_n$, $a^c(\xi,x)$, $H^c$, $c_+$ and $c_+^*$  in Section~\ref{sec3} by $U_{\mu}$, $Q_{+,\mu}$, $a_{n,\mu}^c(\xi,x)$, $H^c_{n,\mu}$, $a_{\mu}^c(\xi,x)$, $H_{\mu}^c$, $c_{+,\mu}$ and $c_{+,\mu}^*$, respectively.

Before starting the proof, let us recall some existing results on the spreading speeds of the Cauchy problem \eqref{cauchy}.
Let
\begin{equation*}
\bar{Q}_+[\phi](\xi,x):=\bar{U}[\phi(\cdot+\xi-x,\cdot)](x) \quad \hbox{for }\,\, \phi\in \mathcal{M},
\end{equation*}
where $\mathcal{M}$ is given in the beginning of Section~\ref{sec3}, and $\bar{U}$ is the Poincar\'{e} map for the Cauchy problem \eqref{cauchy}, that is,
\begin{equation}\label{caupoin}
\bar{U}[\psi](x)=v(\omega,x;\psi)\quad \hbox{for }\,\, \psi\in C(\R).
\end{equation}
It is easily checked that $\bar{Q}_+$ maps $\mathcal{M}$ into $C(\R^2)$ and for any $\phi\in \mathcal{M}$, $\bar{Q}_+[\phi](\xi,x)$ has the properties (a)-(c) and (e). We fix a number $h_0\in\R$ and a function $\phi\in\mathcal{M}$ such that $\phi(\xi,x)\equiv 0$ if and only if $\xi\geq h_0$. For any $c\in\R$, we define the sequence $\{\bar{a}_n^c\}_{n\in\N}$ by the following recursion
\begin{equation}\label{caurecur}
\bar{a}_{n+1}^c(\xi,x)=\max\Big\{\phi(\xi,x), \,\bar{Q}_+[\bar{a}_n^c](\xi+c,x) \Big\}, \quad\quad  \bar{a}_{0}^c(\xi,x)=\phi(\xi,x).
\end{equation}
It follows from the analysis in \cite[Section 3]{w2} that 
\[
\mbox{ $\bar{a}^c(\xi,x):=\lim_{n\to\infty}\bar{a}_n^c(\xi,x)$ exists pointwisely in $\R^2$}
\]
 and that 
\[\mbox{
either $\bar{a}^c(\infty,x)\equiv 0$ or $\bar{a}^c(\infty,x)\equiv p(0,x)$.}
\]
Set
\begin{equation}\label{decaus}
\bar{c}_+=\sup\Big\{c\in\R:\, \bar{a}^c(\infty,x)\equiv p(0,x) \Big\}. 
\end{equation}
Then, \cite[Lemma~3.2]{w2} implies that 
\begin{equation}\label{procacu}
c< \bar{c}_+\,\,\hbox{ if and only if  }\,\,\bar{a}^c_{n_0}(h_0,x)>\phi(-\infty,x) \,\,\hbox{ for some $n_0\in\N$ and all }\,x\in\R. 
\end{equation}
  Moreover, the following lemma is an easy application of \cite[Theorem 2.2]{fyz}.   

\begin{lem}\label{lemscau}
Let $\bar{c}_+^*=\bar{c}_+/\omega$. Then $\bar{c}_+^*$ is the rightward spreading speed for problem~\eqref{cauchy}.
\end{lem}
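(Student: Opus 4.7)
The plan is to invoke \cite[Theorem 2.2]{fyz} directly; the only substantive work is to verify that our setup meets its hypotheses and that $\bar{c}_+$ defined in \eqref{decaus} coincides with the spreading speed whose existence that theorem guarantees.

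First I would verify the structural assumptions. By hypothesis (H), the unique positive $\omega$-periodic, $L$-periodic solution $p$ of \eqref{psteady11} exists and attracts every bounded initial datum bounded away from zero under the semiflow generated by \eqref{cauchy}. Standard parabolic theory then shows that the Poincar\'e map $\bar{U}$ from \eqref{caupoin} is continuous (in the locally uniform topology), order-preserving, $L\Z$-translation invariant, and monostable with exactly two nonnegative fixed points $0$ and $p(0,\cdot)$. This places the discrete-time dynamical system $u_{n+1}=\bar{U}[u_n]$ squarely into the class treated in \cite{fyz} (and previously in \cite{w2,lz2} for their respective partial settings).

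Next I would match the spreading-speed characterizations. The operator $\bar{Q}_+$ is the profile lift of $\bar{U}$ in the sense of Weinberger: identifying a nonincreasing profile $\phi(\xi,x)$ with the function $x\mapsto\phi(\xi-x,x)$, the recursion \eqref{caurecur} is exactly Weinberger's discrete spreading scheme applied to $\bar{U}$. Hence $\bar{c}_+$ is the discrete-time spreading speed per application of $\bar{U}$. Because one application of $\bar{U}$ corresponds to an elapsed time $\omega$, the per-unit-time rate is $\bar{c}_+^*=\bar{c}_+/\omega$. The threshold criterion \eqref{procacu}, whose proof is carried out in \cite[Lemma~3.2]{w2}, is precisely the condition used in \cite[Theorem~2.2]{fyz} to identify the spreading speed.

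Finally, the passage from the discrete conclusion on $\bar{U}^n$ to the continuous one on $v(t,\cdot;v_0)$ is routine: for any $t=n\omega+\tau$ with $\tau\in[0,\omega)$ one has $v(t,\cdot;v_0)=v(\tau,\cdot;\bar{U}^n[v_0])$, and the parabolic interior estimates combined with the continuous dependence results from Part~1 control the oscillation of $v(t,x)-p(t,x)$ over any interval of length $\omega$ at the relevant spatial scale, so the conclusion $\bar{c}_+^*=\bar{c}_+/\omega$ follows. The main (but minor) obstacle in writing this out is the bookkeeping required to align the profile class $\mathcal{M}$ used here with the comparison class in \cite{fyz}; once aligned, the equivalence of the two spreading-speed definitions is essentially the content of \cite[Lemma~3.2]{w2}, and Lemma~\ref{lemscau} is proved.
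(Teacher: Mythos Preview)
Your proposal is correct and follows exactly the approach indicated in the paper: the paper does not write out a proof of this lemma at all, merely stating that it ``is an easy application of \cite[Theorem 2.2]{fyz}.'' Your outline supplies the verification of hypotheses and the identification of $\bar{c}_+$ with the spreading speed from \cite{fyz} that the paper leaves implicit, so you have in fact written more than the paper does.
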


The next result is the key of this section.

\begin{lem}\label{monoconv}
Let $c_{+,\mu}$ and  $\bar{c}_+$ be given in \eqref{defic} and \eqref{decaus}, respectively. Then $c_{+,\mu}$ is nondecreasing in $\mu>0$ and $\lim_{\mu\to\infty} c_{+,\mu}= \bar{c}_+$.
\end{lem}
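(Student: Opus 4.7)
The plan is to establish three facts: (1) $c_{+,\mu}$ is nondecreasing in $\mu$; (2) $c_{+,\mu}\le \bar c_+$ for every $\mu>0$; and (3) $\lim_{\mu\to\infty}c_{+,\mu}\ge \bar c_+$. Together with the limit's existence (a monotone bounded sequence), these give the assertion. Throughout the argument I would fix a single convenient $\phi\in\mathcal{M}\cap\{\text{admissible for \eqref{caurecur}}\}$, namely $\phi(\xi,x)=\tau(\xi)p(0,x)$ with $\tau$ smooth, nonincreasing, $\tau\equiv\beta\in(0,1)$ on $(-\infty,h_0-1]$ and $\tau\equiv 0$ on $[h_0,\infty)$. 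With this choice the constant $H_2$ of Lemma~\ref{maxmin} coincides with $h_0$, so that Lemma~\ref{comkey} and \eqref{procacu} can be compared through the \emph{same} test value $h_0$.

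For the monotonicity, I would invoke the comparison principle for the free boundary problem from Part~1 in its dependence on $\mu$: for $\mu_1\le\mu_2$ and the same initial datum, $h_{+,\mu_1}(t;\varphi)\le h_{+,\mu_2}(t;\varphi)$ and $u_{+,\mu_1}(t,\cdot;\varphi)\le u_{+,\mu_2}(t,\cdot;\varphi)$. Hence $U_{\mu_1}[\varphi]\le U_{\mu_2}[\varphi]$ pointwise on $\R$, which via the recursion \eqref{recuran} propagates by induction to $a_{n,\mu_1}^{c}\le a_{n,\mu_2}^{c}$, and on letting $n\to\infty$ to $a_{\mu_1}^c(\infty,\cdot)\le a_{\mu_2}^c(\infty,\cdot)$. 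Lemma~\ref{proc} then forces $c_{+,\mu_1}\le c_{+,\mu_2}$. The upper bound is obtained in exactly the same way by comparison with the Cauchy problem: $u_{+,\mu}(t,x;\varphi)\le v(t,x;\varphi)$ yields $U_\mu[\varphi]\le \bar U[\varphi]$, hence $a_{n,\mu}^c\le\bar a_n^c$, and if $c>\bar c_+$ then $\bar a^c(\infty,x)\equiv 0$ forces $a_\mu^c(\infty,x)\equiv 0$, so $c_{+,\mu}\le c$. Thus $c_{+,\mu}\le\bar c_+$ for every $\mu$.

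For the lower bound, fix $c<\bar c_+$. By \eqref{procacu} there is $n_0\in\N$ with $\bar a_{n_0}^c(h_0,x)>\phi(-\infty,x)$ for all $x\in\R$; since both sides are $L$-periodic and continuous in $x$, some $\delta_0>0$ satisfies $\bar a_{n_0}^c(h_0,x)\ge \phi(-\infty,x)+\delta_0$ on $\R$. The crux is the convergence
\begin{equation}\label{keyconv-plan}
a_{n,\mu}^c(\xi,x)\longrightarrow \bar a_n^c(\xi,x)\qquad\text{as }\mu\to\infty,\text{ locally uniformly in }(\xi,x),
\end{equation}
for each fixed $n\in\N$. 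Accepting \eqref{keyconv-plan} (with $n=n_0$) and using periodicity in $x$, we obtain $a_{n_0,\mu}^c(h_0,x)>\phi(-\infty,x)$ for all $x\in\R$ once $\mu$ is large enough; Lemma~\ref{comkey} (with $H_2=h_0$) then yields $a_\mu^c(\infty,x)\equiv p(0,x)$, whence $c\le c_{+,\mu}$ by Lemma~\ref{proc}. Passing to $\mu\to\infty$ and then $c\uparrow\bar c_+$ completes the proof.

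The main obstacle is \eqref{keyconv-plan}. By induction on $n$ it reduces, via the recursion \eqref{recuran}, \eqref{defirq} and property (A3), to the following single step: for every $\varphi\in\mathcal{C}$,
\[
U_\mu[\varphi](x)\longrightarrow \bar U[\varphi](x)=v(\omega,x;\varphi)\quad\text{locally uniformly in }x\in\R,\text{ as }\mu\to\infty.
\]
Since $u_{+,\mu}(t,\cdot;\varphi)$ and $h_{+,\mu}(t;\varphi)$ are monotone nondecreasing in $\mu$ and dominated by $v(t,\cdot;\varphi)$ and $+\infty$, the monotone limits $u_\infty(t,x):=\lim_{\mu\to\infty}u_{+,\mu}(t,x;\varphi)$ and $h_\infty(t):=\lim_{\mu\to\infty}h_{+,\mu}(t;\varphi)\in(h_0,+\infty]$ exist; standard interior parabolic estimates give $u_{+,\mu}\to u_\infty$ in $C^{1,2}_{loc}$ on $\{(t,x):t>0,\;x<h_\infty(t)\}$, so $u_\infty$ solves $u_t=du_{xx}+f(t,x,u)$ there. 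The remaining issue is to prove $h_\infty(\omega)=+\infty$: suppose otherwise, then since $h_{+,\mu}(t)-h_0=-\mu\int_0^t u_x(s,h_{+,\mu}(s))\,ds$ is uniformly bounded, we get $\int_0^t u_x(s,h_{+,\mu}(s))\,ds=O(\mu^{-1})$, so in the limit $u_\infty$ vanishes together with its outward derivative at the interface $x=h_\infty(t)$. On the other hand, the strong parabolic maximum principle forces $u_\infty>0$ on $\{x<h_\infty(t),\;t>0\}$, and Hopf's lemma at $x=h_\infty(t)$ from inside then gives $\partial_x u_\infty(t,h_\infty(t))<0$, a contradiction. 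Hence $h_\infty(\omega)=+\infty$ and on every bounded spatial set $u_\infty(\omega,\cdot)$ coincides with $v(\omega,\cdot;\varphi)$ by uniqueness of the Cauchy problem, which delivers the required pointwise convergence and closes the proof.
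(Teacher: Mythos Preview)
Your overall architecture coincides with the paper's: monotonicity in $\mu$ via the comparison principle from Part~1, the bound $c_{+,\mu}\le\bar c_+$ by dominating the free-boundary flow with the Cauchy flow, and the lower bound by transferring the criterion \eqref{procacu} to the free-boundary recursion through Lemma~\ref{comkey}, once the finite-step convergence $a^c_{n,\mu}\to\bar a^c_n$ is available. (Your upper-bound argument is in fact a shade more direct than the paper's, which passes through \eqref{mutoinf} rather than the pointwise inequality $U_\mu\le\bar U$.) The paper obtains the crucial convergence $U_\mu[\varphi]\to\bar U[\varphi]$ and $h_{+,\mu}(\omega;\varphi)\to+\infty$ by quoting \cite[Theorem~5.4]{dg2}; you attempt to reprove it from scratch.

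The gap is in that self-contained step. From $\int_0^t(-\partial_x u_{+,\mu})(s,h_{+,\mu}(s))\,ds=O(\mu^{-1})$ you conclude that ``$\partial_x u_\infty=0$ at the interface'' and then invoke Hopf. Neither inference is justified as written: an $L^1$ bound on the boundary flux of $u_{+,\mu}$ does not by itself control the one-sided derivative of the \emph{limit} $u_\infty$ at $x=h_\infty(t)$, and Hopf's lemma would require $u_\infty$ to be $C^1$ up to the curve $x=h_\infty(t)$, whose regularity is unknown (it is merely a monotone limit of smooth curves). The clean way to turn your estimate into a contradiction is via the weak formulation: for every $\psi\in C_c^\infty((0,\omega)\times\R)$,
\[
\iint_{(0,\omega)\times\R}\!\!\big(u_{+,\mu}\psi_t+d\,u_{+,\mu}\psi_{xx}+f(t,x,u_{+,\mu})\psi\big)\,dx\,dt
=-d\int_0^\omega\!\partial_x u_{+,\mu}(t,h_{+,\mu}(t))\,\psi(t,h_{+,\mu}(t))\,dt,
\]
and the right-hand side is $O(\mu^{-1})$ by your own bound. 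Passing to the limit shows $u_\infty$ is a distributional, hence classical, solution of the Cauchy equation on all of $(0,\omega)\times\R$; the strong maximum principle then forces $u_\infty>0$ everywhere for $t>0$, contradicting $u_\infty\equiv 0$ on $\{x>h_\infty(t)\}$. This is the mechanism behind \cite[Theorem~5.4]{dg2}. A smaller point: your reduction of the inductive convergence to the single-step convergence is not a pure application of (A3), because in $U_\mu[a^c_{n,\mu}(\cdot+\xi+c,\cdot)]$ the argument itself varies with $\mu$. The paper handles this by also carrying uniform Lipschitz bounds through the induction; alternatively you can sandwich, using $a^c_{n,\mu'}\le a^c_{n,\mu}\le\bar a^c_n$ for $\mu'\le\mu$, the single-step convergence applied to the fixed datum $a^c_{n,\mu'}$, and the continuity of $\bar U$.
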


\begin{proof}
Let $h_0\in\R$ and $\phi\in\mathcal{M}$ be as in \eqref{caurecur}.
For any real number $c$ and any $\mu>0$,  let the sequence 
$\big\{\big(a^c_{n,\mu}(\xi,x), H^c_{n,\mu}(\xi)\big)\big\}_{n\in\N}$ be obtained from the recursions \eqref{recuran} and \eqref{recurhn}, and $\{\bar{a}^c_{n}(\xi,x)\}_{n\in\N}$ be obtained from \eqref{caurecur}.  

We first claim that, for any $n\in \N$ and any bounded subsets $K_1\subset\R$ and $K_2\subset\R$,  
$a_{n,\mu}^c(\xi+x,x)$ is Lipschitz continuous in $x\in K_2$ uniformly in $\xi\in K_1$ and uniformly in $\mu$ for all large positive $\mu$,
and there holds
\begin{equation}\label{mutoinf0}
\lim_{\mu\to\infty}H^c_{n,\mu}(\xi)=\infty\,\hbox{ uniformly in } \,\xi\in K_1,
\end{equation}
\begin{equation}\label{mutoinf}
\lim_{\mu\to\infty}a_{n,\mu}^c(\xi+x,x)=\bar{a}_{n}^c(\xi+x,x)\,\, \hbox{ uniformly in }\,x\in K_2,\; \xi\in K_1.
\end{equation}

In the case of $n=1$, according to the definitions of $H^c_{n,\mu}$ and $a_{n,\mu}^c$, we have 
$$H^c_{1,\mu}(\xi)=\max\Big\{h_0, h_{+,\mu}\big(\omega;\phi(\cdot+\xi+c,\cdot)\big)\Big\},$$
and
$$a^c_{1,\mu}(\xi+x,x)=\max\Big\{\phi(\xi+x,x), U_{\mu}[\phi(\cdot+\xi+c,\cdot)](x)\Big\}, $$
where $U_{\mu}$ is the operator defined in Section~2.2.
By \cite[Theorem 5.4]{dg2}\footnote{We remark that \cite[Theorem 5.4]{dg2} is concerned with the convergence (as $\mu\to\infty$) of the weak solutions in the sense of \cite[Definition 2.1]{dg2} in high space dimensions.  By some slight modifications of the proof in \cite[Theorem 5.4]{dg2}, we can conclude that such a convergence result is still valid for classical solutions to problem \eqref{eqf} with Lipschitz continuous initial data in $\mathcal{H}(g_0,h_0)$ and for classical solutions to problem \eqref{eqfunbd} with Lipschitz continuous initial data in $\mathcal{H}_+(h_0)$.}, 
we obtain
$$ h_{+,\mu}\big(\omega;\phi(\cdot+\xi+c,\cdot)\big)\to\infty\, \hbox{ as }\,\mu\to\infty\, \hbox{ uniformly in }\, \xi\in K_1,$$ 
and 
$$U_{\mu}[\phi(\cdot+\xi+c,\cdot)](x)\to \bar{U}[\phi(\cdot+\xi+c,\cdot)](x)\,\,\,{ as }\,\,\,\mu\to\infty\,\,\hbox{ in }  C^{1+\alpha}(K_2)$$
 uniformly in $ \xi\in K_1$,
where $\bar{U}$ is  given in \eqref{caupoin}. 
It then follows that
 \eqref{mutoinf0} and \eqref{mutoinf} hold for $n=1$, and $\partial_xU_{\mu}[\phi(\cdot+\xi+c,\cdot)](x)$ is uniformly bounded  in $x\in K_2$, $\xi\in K_1$ for all large $\mu$, say $\mu\geq \mu_0$. Thus, $a_{1,\mu}^c(\xi+x,x)$ is Lipschitz continuous in $x\in K_2$ uniformly in $\xi\in K_1$ and $\mu\geq \mu_0$. This proves our claim  in the case of $n=1$. 

Now, suppose that our claim is valid for some $n=n_0\in\N$; we want to prove that it still holds for $n=n_0+1$. 
Since $a_{n_0,\mu}^c(\xi+x,x)$ is Lipschitz continuous in $x\in K_2$ uniformly in  $\xi\in K_1$ and $\mu\geq \mu_0$, by obvious modifications of \cite[Theorem 5.4]{dg2}, we have
$$\lim_{\mu\to\infty}h_{+,\mu}\big(\omega; a_{n_0,\mu}^c(\cdot+\xi+c,\cdot)\big)=\infty \,\hbox{ uniformly in } \,\xi\in K_1,  $$
and that
$$U_{\mu}[a_{n_0,\mu}^c(\cdot+\xi+c,\cdot)](x)\to \bar{U}[\bar{a}_{n_0}^c(\cdot+\xi+c,\cdot)](x)\,\hbox{ as }\,\mu\to\infty\,\hbox{ in }\,C^{1+\alpha}(K_2)$$ 
uniformly in $\xi\in K_1$. Then the same reasoning  as  in the case of $n=1$ shows that our claim also holds for $n=n_0+1$.

Next, we prove that $c_{+,\mu_1}\leq c_{+,\mu_2}$ whenever $0<\mu_1\leq \mu_2$. Indeed, it follows from the comparison principle \cite[Proposition~2.14]{ddl} that for any $\tilde{\phi}\in\mathcal{M}$, 
$$U_{\mu_1}[\tilde{\phi}(\cdot+\xi-x,\cdot)](x)\leq U_{\mu_2}[\tilde{\phi}(\cdot+\xi-x,\cdot)](x)\,\,\hbox{ for all }\,\xi\in\R,\,x\in\R.$$ 
Since $U_{\mu_1}$ and $U_{\mu_2}$ are order-preserving operators, it follows from an induction argument that 
\begin{equation}
\label{mu1-mu2}
a^c_{n,\mu_1}(\xi,x)\leq a^c_{n,\mu_2}(\xi,x)\, \hbox{ for all }\,\xi\in\R,\,x\in\R.
\end{equation} 
Passing to the limits $n\to\infty$ and $\xi\to\infty$, we obtain 
\[
\mbox{$a^c_{\mu_1}(\infty,x)\leq a^c_{\mu_2}(\infty,x)$ for all $x\in\R$.}
\]
 It follows immediately that 
\[
c_{+,\mu_1}\leq c_{+,\mu_2}.
\]
Moreover, using \eqref{mutoinf} and \eqref{mu1-mu2}, we also obtain
\[
a^c_{n,\mu}(\xi,x)\leq \bar a^c_{n}(\xi,x)\, \hbox{ for all }\,\xi\in\R,\,x\in\R,\; \mu>0.
\]
By passing to the limit $n\to\infty$ and $\xi\to\infty$ it follows  that $c_{+,\mu}\leq \bar{c}_+$. Thus, the limit $\lim_{\mu\to\infty} c_{+,\mu}$ exists and $\lim_{\mu\to\infty} c_{+,\mu}\leq \bar{c}_+$.

To end the proof, we need to show that $\lim_{\mu\to\infty} c_{+,\mu}= \bar{c}_+$. Assume by contradiction that $\lim_{\mu\to\infty} c_{+,\mu}<\bar{c}_+$. Then there exists some $c'\in \R$ such that $c_{+,\mu}<c'<\bar{c}_+$ for all $\mu>0$. It follows from \eqref{procacu} that there is some $n_0>0$ such that 
$$\bar{a}^{c'}_{n_0}(h_0,x)>\phi(-\infty,x)\,\hbox{ for all }\,x\in\R.$$ 
Since the functions $a^{c'}_{n_0,\mu}(h_0,x)$ and $\bar{a}^{c'}_{n_0}(h_0,x)$ are $L$-periodic in $x\in\R$, it follows
from \eqref{mutoinf} that 
$$a^{c'}_{n_0,\mu}(h_0,x)\to \bar{a}^{c'}_{n_0}(h_0,x)\,\hbox{ as }\,\mu\to\infty\, \hbox{ uniformly in } \,x\in\R.$$ 
We then find some $\mu_1>0$ sufficiently large such that 
$$a^{c'}_{n_0,\mu_1}(h_0,x)>\phi(-\infty,x)\,\hbox{ for all }\,x\in\R.$$ 
It follows from Lemma~\ref{comkey} that $c'<c_{+,\mu_1}$, which is in contradiction with the assumption that $c_{+,\mu}<c'$ for all $\mu>0$. The proof of Lemma~\ref{monoconv} is thereby complete.
\end{proof}

\begin{proof}[Proof of Theorem~\ref{limitmu}] 
Let $c^*_{+,\mu}$ be the rightward spreading speed for the free boundary problem \eqref{eqf}. It then follows from Theorem~\ref{therspeed} that $c^*_{+,\mu}=c_{+,\mu}/\omega$. By Lemmas~\ref{lemscau}~and~\ref{monoconv}, 
 $c^*_{+,\mu}$ is nondecreasing in $\mu>0$ and $\lim_{\mu\to\infty}c^*_{+,\mu}=\bar{c}^*_{+}$. 

Correspondingly, we conclude that $c^*_{-,\mu}$ is nondecreasing in $\mu>0$ and $\lim_{\mu\to\infty}c^*_{-,\mu}=\bar{c}^*_{-}$, where $c^*_{-,\mu}$ is the leftward spreading speed for problem \eqref{eqf} and $\bar{c}^*_{-}$ is the leftward spreading speed for problem \eqref{cauchy}. The proof of Theorem~\ref{limitmu} is now complete.
\end{proof}

\end{document}